\documentclass[11pt]{amsart}
\usepackage[T1]{fontenc}
\usepackage{lmodern}
\usepackage{amsmath,amssymb,mathtools}
\usepackage{tikz-cd}
\usepackage{enumitem}
\usepackage{microtype}
\usepackage{mathrsfs}
\usepackage[a4paper,margin=30mm]{geometry}
\usepackage[
  colorlinks=true,
  linkcolor=blue,
  citecolor=blue,
  urlcolor=blue
]{hyperref}
\theoremstyle{plain}
\newtheorem{theorem}{Theorem}[section]
\newtheorem{proposition}[theorem]{Proposition}
\newtheorem{lemma}[theorem]{Lemma}

\newtheorem{maintheorem}{Theorem}

\theoremstyle{definition}
\newtheorem{definition}[theorem]{Definition}
\newtheorem{remark}[theorem]{Remark}

\numberwithin{equation}{section}
\setlist[enumerate,1]{label=\textup{(\arabic*)},leftmargin=2.5em}

\newcommand{\OO}{\mathcal O}
\newcommand{\RG}{\mathbf R\Gamma}
\newcommand{\cris}{\mathrm{cris}}
\newcommand{\tors}{\mathrm{tors}}
\newcommand{\Fil}{\operatorname{Fil}}
\newcommand{\Def}{\operatorname{Def}}
\newcommand{\Per}{\operatorname{Per}}
\newcommand{\Hom}{\operatorname{Hom}}
\newcommand{\Ext}{\operatorname{Ext}}
\newcommand{\Spec}{\operatorname{Spec}}

\newcommand{\im}{\operatorname{im}}
\newcommand{\coker}{\operatorname{coker}}
\newcommand{\cofib}{\operatorname{cofib}}

\newcommand{\length}{\operatorname{length}}

\title[Witt liftability of quasi-$F$-split Calabi--Yau varieties]
{On the Witt liftability of quasi-F-split Calabi--Yau varieties}
\author{Teppei Takamatsu}
\address{Department of Mathematics, Saitama University, Saitama, Japan}
\email{teppeitakamatsu.math@gmail.com}
\date{}
\subjclass[2020]{14J32, 14D15, 14F30, 13A35}
\keywords{Calabi--Yau variety, quasi-Frobenius splitting, Witt vectors, lifting}
\hypersetup{
 pdftitle={Witt liftings of quasi-F-split Calabi-Yau varieties},
 pdfauthor={Teppei Takamatsu}
}

\begin{document}
\begin{abstract}
We prove that a quasi-$F$-split strict Calabi--Yau variety $X$ of dimension
$d$ over an algebraically closed field $k$ of odd characteristic $p$
admits a smooth projective lifting over $W(k)$ if $p\geq d-1$,
or if $p=d-2$ and $H^0(X,T_X)=0$.
We also prove $W_3(k)$-liftability in odd characteristic.
\end{abstract}
\maketitle

\section{Introduction}

Let $k$ be an algebraically closed field of characteristic $p>2$,
and write $W=W(k)$ and $W_n=W/p^nW$.
A strict Calabi--Yau variety of dimension $d\geq1$ is a smooth
projective geometrically integral variety $X/k$ such that
\[
 \omega_X\simeq\OO_X,\qquad
 H^i(X,\OO_X)=0\quad(0<i<d).
\]
We study the existence of smooth projective liftings of such
varieties over $W$.
In dimensions one and two, these varieties are genus-one curves
and K3 surfaces, respectively, and admit such liftings in every
characteristic; for K3 surfaces, see
\cite[Proposition~2.6]{Ito18}, and also \cite{Deligne81,Ogus79}.
The situation changes in dimension three: Hirokado constructed a
non-liftable example in characteristic three \cite{Hirokado99},
and Schr\"oer constructed examples in characteristics two and three
\cite{Schroer04}.
Thus additional hypotheses are needed for a general lifting theorem.

A natural candidate for such an additional hypothesis is
quasi-$F$-splitting.
Yobuko proved that the quasi-$F$-split height of a strict
Calabi--Yau variety agrees with the height of its Artin--Mazur
formal group, and that every smooth quasi-$F$-split variety
admits a $W_2$-lifting \cite[Theorems~4.4 and~4.5]{YobukoQ}.
In particular, finite Artin--Mazur height gives the first step
towards a Witt-vector lifting.
Achinger and Zdanowicz made this construction explicit for a
chosen quasi-$F$-splitting \cite[Appendix~A]{AZ}.
See also \cite{BBKW24} for related results on Witt liftings
of globally $F$-split normal projective surfaces.

The relationship between $W_2$-liftability and Hodge theory
also plays an important role in our approach. Deligne and Illusie proved Hodge--de Rham
degeneration from $W_2$-liftability in the range $\dim X\leq p$
\cite[Corollaire~2.4]{DI87}.
For smooth proper quasi-$F$-split varieties, Petrov proved
degeneration without a dimension restriction, by decomposing
the de Rham complex in all degrees \cite[Theorem~1.1]{Petrov}.

Extending a $W_2$-lifting through all higher Witt rings requires
further control of deformation obstructions.
Earlier mixed-characteristic unobstructedness results of
Schr\"oer and Ekedahl--Shepherd-Barron use lifting criteria
together with an already existing characteristic-zero lifting
and additional cohomological hypotheses \cite{Schroer03,ES05}.
Brantner and Taelman proved a mixed-characteristic
Bogomolov--Tian--Todorov theorem assuming Hodge--de Rham
degeneration and torsion-freeness of crystalline cohomology
\cite[Theorem~A]{BT}.
Their sharper criterion uses a decomposition of the crystalline
complex and torsion-freeness of an associated mapping group
\cite[Theorem~7.16]{BT}.
Every polarized abelian variety over $k$ admits a lifting
to characteristic zero, by work of Mumford, Norman, and Oort
\cite[Corollary~3.2]{NormanOort80}.
Serre--Tate theory identifies infinitesimal deformations
of an abelian variety with deformations of its
$p$-divisible group \cite{LST64};
see also \cite[Theorem~1.2.1]{Katz81}.
For ordinary abelian varieties, Serre--Tate theory
provides canonical liftings over $W(k)$
\cite[Section~2]{Katz81}.
For Calabi--Yau varieties, Achinger and Zdanowicz
established an analogue under additional ordinarity
and unobstructedness hypotheses
\cite[Theorem~1]{AZ}.
Brantner and Taelman subsequently obtained canonical projective
liftings for smooth projective varieties with trivial canonical
bundle that are Bloch--Kato $2$-ordinary and satisfy an
\'etale cohomology condition
\cite[Theorems~B and~C and Corollary~1.7]{BT}; that condition
is automatic for strict Calabi--Yau varieties of dimension at
least two \cite[Remark~1.5]{BT}.
See also \cite{Moore26}, where quasicanonical liftings are
characterized in terms of rational $p$-adic Hodge theory.

In this paper, we construct Witt-vector liftings of
quasi-$F$-split strict Calabi--Yau varieties without assuming
torsion-freeness of crystalline cohomology.
Our main result is the following.

\begin{maintheorem}
\label{main:witt}
Let $X$ be a quasi-$F$-split strict Calabi--Yau variety of dimension
$d$ over $k$, and suppose that $p>2$.
Assume that either $p\geq d-1$, or $p=d-2$ and
$H^0(X,T_X)=0$.
Then $X$ admits a smooth projective lifting over $W(k)$.
\end{maintheorem}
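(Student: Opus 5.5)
The plan is to build a compatible system of smooth liftings $X_n/W_n$ for all $n$, algebraize the resulting formal scheme using an ample line bundle, and handle the obstructions at each stage by a $T^1$-lifting argument in the style of Ran and Kawamata, carried out in mixed characteristic. I would set it up in four steps.

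\textbf{Base step.} Yobuko's theorem \cite[Theorem~4.5]{YobukoQ}, in the explicit form of Achinger--Zdanowicz \cite[Appendix~A]{AZ}, supplies a $W_2$-lifting $X_2$ once a quasi-$F$-splitting is fixed. Since $p>2$, I would like to use the splitting of the Witt-vector Frobenius more carefully to get a lifting together with extra structure, namely a compatible lift of Frobenius-type data, or even a $W_3$-lifting. This is the content of the abstract's claim that $W_3(k)$-liftability holds for every odd $p$. That extra order is what buys the shift from the Deligne--Illusie range $d\le p$ to $d\le p+1$, or $d\le p+2$ when $H^0(T_X)=0$.

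\textbf{Hodge and crystalline input.} Petrov's decomposition \cite[Theorem~1.1]{Petrov} gives Hodge--de Rham degeneration for $X$. Combined with $\omega_X\simeq\OO_X$ and the vanishing of $H^i(\OO_X)$, it gives a Hodge-theoretic description of $H^1(T_X)\simeq H^1(\Omega^{d-1}_X)$. The obstruction space is $H^2(T_X)\simeq H^2(\Omega^{d-1})$. The key claim is that obstructions to lifting $X_n$ from $W_n$ to $W_{n+1}$ vanish. I would prove this via a relative $T^1$-lifting property: the $W_n$-module $H^1(X_n,T_{X_n/W_n})\simeq H^1(X_n,\Omega^{d-1}_{X_n/W_n})$ should be free, or at least should commute with base change along $W_n\to W_{n-1}$. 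Following Brantner--Taelman, freeness would come from torsion-freeness of crystalline cohomology. We do not have that, so I would replace it with a truncated statement: the relevant part of the Hodge filtration on $H^d_{\cris}(X_n/W_n)$ in degrees $d-1,d$ is controlled only up to a bounded range of $p$-adic levels. That range is governed by Fontaine--Laffaille/Deligne--Illusie type decompositions of $\tau_{<p}$ of the de Rham complex of $X_n$, which require roughly $d-1\le p$. The hypothesis $H^0(T_X)=0$ kills the remaining boundary term $H^0(\Omega^{d-1})$ in the case $p=d-2$, where one more degree falls outside the range.

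\textbf{Obstruction vanishing (main obstacle).} The central task is to show that the obstruction class in $H^2(X,T_X)$ pairs trivially, under the $\omega_X\simeq\OO_X$ contraction, with the quasi-$F$-split structure. Concretely, the obstruction should factor through a Bockstein-type map $H^1(\Omega^{d-1}_{X_n})\to H^2(\Omega^{d-1}_X)$, and I would prove that this map vanishes by lifting the quasi-$F$-splitting to $W_n\Omega$ and using the finite Artin--Mazur height. I expect this to be the hardest part: making the truncated de Rham decomposition work over $W_n$ for every $n$, and not only $n=2$, without torsion-freeness. I would first try to induct on $n$ using the $W_3$-lift as the base, so that the comparison needs only degrees $\le p+1$.

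\textbf{Algebraization.} The formal lifting is algebraized by lifting an ample line bundle $L$. The obstruction to lifting $L$ lies in $H^2(\OO_X)$, which vanishes when $d\ge3$; the cases $d\le2$ are classical. Grothendieck existence then produces a projective smooth lifting over $W$.
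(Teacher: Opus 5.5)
\textbf{There is a genuine gap at the central step, which is also aimed at the wrong target.} Your outer shell matches the paper: the $W_2$-lifting from a quasi-$F$-splitting, Petrov's degeneration, a Bockstein landing in $H^2(X,T_X)$, and algebraization via $H^2(X,\OO_X)=0$. But you try to show that the obstruction to extending $X_n$ to $W_{n+1}$ simply vanishes. Your routes are freeness or base change of $H^1(X_n,T_{X_n/W_n})$ in $T^1$-lifting style, or vanishing of the Bockstein $H^1(X_n,T_{X_n/W_n})\to H^2(X,T_X)$.

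Statements of this kind are what torsion-freeness of crystalline cohomology buys in Brantner--Taelman, and they yield unobstructedness. Without torsion-freeness they are not available. Unobstructedness is in fact false inside the range of the theorem: the Addington--Bragg threefold ($p=3$, height one, torsion in $H^3_{\cris}$, $h^0(T_X)=1$) has $W_2$-liftings that do not extend to $W_3$, by Ward. So a $T^1$-lifting argument of the kind you describe cannot work as stated. You also give no reason why the \emph{unmodified} tower built on your $W_3$-lifting should be unobstructed, and the paper does not claim this.

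Your guess that the obstruction is a Bockstein image is essentially right, but the paper uses it the other way round. It shows that $o_n(X_n)$ lies in the image of $\delta_r\colon H^1(X_r,T_r)\to H^2(X,T_X)$ for $r=\min\{n-2,E\}$. Here $E$ is a fixed exponent killing the torsion of $H^1(\mathbf R\Hom_W(P,Q))$ for the splitting described below. It then picks $u$ with $\delta_r(u)=-o_n(X_n)$. It replaces $X_n$ by a lifting with the same reduction to $W_{n-r}$ and zero obstruction, obtained by integrating $u$ along a finite divided-power curve and evaluating at $t=p^{n-r}$. The uniform bound $r\le E$ makes these corrected liftings stabilize into a compatible tower.

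\textbf{Putting the obstruction in $\im\delta_r$ needs ingredients your sketch lacks.}
\begin{enumerate}
\item An integral decomposition $\RG_{\cris}(X/W)\simeq P\oplus Q$ with $Q=W\oplus W[-d]$ lifting the Hodge quotient. It comes from splitting the $F$-gauge map $\mathcal N_X\to\mathcal U_h$, using topological nilpotence of $V$ on the Artin--Mazur Dieudonn\'e module; this is the substitute for torsion-freeness.
\item The Brantner--Taelman period map in the resulting graph chart. There the Achinger--Zdanowicz lifting has coordinate $a_2=0$ (this is where $p>2$ is used), and $\theta_1(o_n(X_n))=\operatorname{red}_1\partial_n(a_n)$ with $p^{n-2}\partial_n(a_n)=\partial_2(a_2)=0$.
\item Injectivity of the relative comparison maps $\theta_r\colon H^2(X_r,T_{X_r/W_r})\to H^1(\mathbf R\Hom_W(P,Q)\otimes^{\mathbf L}_W W_r)$. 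By relative Serre duality this reduces to surjectivity of the first Hodge edge $H^{d-1}(Y_r,\Fil^1\Omega^\bullet_{Y_r/W_r})\to H^{d-2}(Y_r,\Omega^1_{Y_r/W_r})$ for every lifting $Y/W_{r+1}$.
\end{enumerate}

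\textbf{The dimension bound enters only in the third step, not through the extra $W_3$ order as you suggest.} After splitting off $\mathcal U_h$, whose generators have weights $0$ and $1$, the paper compares the complementary Nygaard and Hodge Rees complexes through the ring $B$ of the finite Ogus comparison, in grading degree $p$ where $A$ and $B$ agree. A matrix argument then works provided the generators in cohomological degree $d-1$ have weights in $[1,p]$. That means $H^{d-1-i}(X,\Omega^i_X)=0$ for $p<i\le d-1$, which is automatic for $d\le p+1$ and amounts to $H^0(X,T_X)=0$ when $d=p+2$. Your truncated Deligne--Illusie/Fontaine--Laffaille paragraph points toward this but does not produce the edge surjectivity over $W_n$ that the argument needs.
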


In particular, quasi-$F$-split strict Calabi--Yau threefolds and
fourfolds lift over $W$ in odd characteristic.

Theorem~\ref{main:witt} establishes the existence of a
$W$-lifting, but does not assert unobstructedness of the
entire deformation functor.
In particular, it does not assert that an arbitrary
$W_n$-lifting extends to $W_{n+1}$.
Our construction allows finite-level liftings to be
modified before they are extended.
Such obstructions do occur even at height one:
for the Addington--Bragg threefold in characteristic three,
Ward shows that some $W_2$-liftings do not extend to $W_3$
\cite[Proposition~A.1]{Ward26}.

Without the dimension bound, we obtain one further Witt level.

\begin{maintheorem}
\label{main:W3}
Let $X$ be a quasi-$F$-split strict Calabi--Yau variety over $k$.
If $p>2$, then $X$ admits a smooth projective lifting over $W_3(k)$.
\end{maintheorem}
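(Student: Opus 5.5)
Starting from a $W_2$-lifting provided by quasi-$F$-splitting, I will choose that lifting carefully and then show that its obstruction to extending over $W_3$ can be removed. By Yobuko \cite[Theorem~4.5]{YobukoQ}, and explicitly by Achinger--Zdanowicz \cite[Appendix~A]{AZ}, a quasi-$F$-splitting of height $h$ produces a $W_2$-lifting $X_2$. The construction comes with a lift of the relevant Frobenius-type structure: a splitting of $W_h\OO_X\to F_*W_h\OO_X$ compatible with the lifted structure sheaf. I will take $X_2$ to be this canonical-type lifting. I will not use an arbitrary one, since Ward's example \cite{Ward26} shows that arbitrary $W_2$-liftings can be obstructed.

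The obstruction to extending $X_2$ over $W_3$ is a class $o(X_2)\in H^2(X,T_X)$. For a strict Calabi--Yau, $T_X\simeq\Omega^{d-1}_X$, so this group is $H^2(X,\Omega^{d-1}_X)$. Changing $X_2$ by $\xi\in H^1(X,T_X)=H^1(X,\Omega^{d-1}_X)$ changes the obstruction by a term linear in $\xi$. Concretely, I expect
\[
o(X_2+\xi)=o(X_2)+c(\xi),
\]
where $c$ is built from the $W_2$-structure. Heuristically it is the Frobenius (Bockstein-type) operation $H^1(\Omega^{d-1})\to H^2(\Omega^{d-1})$ coming from $W_2$-de Rham--Witt theory. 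So it suffices to show that $o(X_2)\in\im(c)$, or directly that $o(X_2)=0$ for the chosen lifting.

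To control $o(X_2)$ I will use Hodge theory. By Petrov \cite[Theorem~1.1]{Petrov}, quasi-$F$-split smooth proper varieties have a decomposable de Rham complex, so the Hodge--de Rham spectral sequence degenerates. By the Deligne--Illusie type argument, the obstruction to lifting $X_2$ is then detected by the failure of the Hodge filtration on $H^{d+1}_{\cris}(X_2/W_2)$ to lift to a filtration of $H^{d+1}_{\cris}(X/W_3)$. This is a Bloch--Ogus/Deligne type description via Kodaira--Spencer; the degree is $d+1$ because we contract $o$ against the trivializing form $\omega$, giving $o\cdot\omega\in H^2(\Omega^{d-1})\subset\mathrm{gr}H^{d+1}$.

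The Artin--Mazur formal group has finite height $h$, and $H^{d}(W\OO_X)$ is finitely generated with $F$ acting nilpotently up to height. I plan to use the quasi-$F$-splitting to split the truncation
\[
W_2\Omega^\bullet\to\Omega^\bullet .
\]
This gives a $W_3$-level Frobenius lift $\phi$ on the relevant part of the complex, and transporting the filtration along $\phi$ produces the required filtered lift, forcing $o\cdot\omega=0$. Since contraction with $\omega$ is an isomorphism $T_X\simeq\Omega^{d-1}$, this gives $o=0$.

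The main obstacle is the step asserting that the chosen $W_2$-lifting has vanishing (or correctable) $W_3$-obstruction without a dimension bound. Deligne--Illusie-style decompositions only reach degree $<p$, while here we need degree $d+1$, which can be large. I would try first to prove that the relevant obstruction lives in the single graded piece $H^2(\Omega^{d-1})$. The plan is to compute it Čech-locally from the explicit Achinger--Zdanowicz splitting data, so that only the two-term truncation $\tau_{\le 1}$ of $F_*\Omega^\bullet$ is needed. That truncation is decomposable for any $p>2$, because the $W_2$-lift gives $\OO\oplus\Omega^1[-1]$. The hypothesis $p>2$ should enter in dividing by $2$ when comparing the square term of the obstruction with the Bockstein. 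If this Čech computation identifies $o(X_2)$ with $c(\xi_0)$ for an explicit $\xi_0$, then replacing $X_2$ by $X_2-\xi_0$ yields an unobstructed lifting, and hence a $W_3$-lifting. Projectivity then follows by lifting an ample line bundle, since $H^2(\OO_X)=0$ when $d\geq3$; low dimensions are already known.
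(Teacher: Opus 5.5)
There is a genuine gap: the proposal never supplies the step that makes the obstruction of $X_2(\tau)$ vanish. You do pick the right lifting $X_2(\tau)$ and correctly place the obstruction in $H^2(X,T_X)\cong H^2(X,\Omega^{d-1}_X)$. But your proposed mechanism is to use the quasi-$F$-splitting to split $W_2\Omega^\bullet_X\to\Omega^\bullet_X$, obtain a ``$W_3$-level Frobenius lift'' $\phi$, and transport the Hodge filtration along it. A quasi-$F$-splitting does not provide this; it only gives a retraction of $\OO_X\to F_*W_m\OO_X/p$, and nothing in the proposal constructs such a $\phi$. Your fallback is explicitly conditional. It needs a \v{C}ech computation identifying $o(X_2)$ with $c(\xi_0)$, and even the affine formula $o(X_2+\xi)=o(X_2)+c(\xi)$ at this level is only expected, not proved. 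There is also a structural problem with detecting the obstruction by asking whether a Hodge filtration on $H^{d+1}_{\cris}$ lifts. Crystalline cohomology of $X$ can have torsion, as in the Addington--Bragg example, so filtrations on cohomology groups do not control the obstruction. What is needed is an integral lift of the Hodge quotient at the level of complexes, and your proposal has no source for one.

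The paper builds exactly this. Finite height makes $V$ topologically nilpotent on $D_h=H^d(X,W\OO_X)$. This is used to split the map of $F$-gauges $\mathcal N_X\to\mathcal U_h$ induced by $W\Omega^\bullet_X\to W\OO_X$. Forgetting structure gives $C=\RG_{\cris}(X/W)\simeq P\oplus Q$ with $Q=W\oplus W[-d]$ and $Q\otimes^{\mathbf L}_Wk\simeq\RG(X,\OO_X)$, compatibly with the Hodge quotient.

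Next, for $p>2$ the surjection $W_{m+1}\OO_X\to\OO_{X_2(\tau)}$ defining the Achinger--Zdanowicz lifting is a PD morphism. This holds because $p^j/j!\in p^2\mathbb Z_p$ and $p^{j-1}/j!\in p\mathbb Z_p$ for $j\geq2$; this, not halving a square term, is where $p>2$ enters. Crystalline functoriality then factors the Hodge quotient of $X_2(\tau)$ through $C\to\RG(X,W\OO_X)$. Choosing the functional $\ell$ in the splitting to match this factorization makes the graph coordinate $a_2$ vanish.

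Finally, the Brantner--Taelman comparison $\theta_1(\operatorname{ob})=\delta(\eta_2)$ finishes the argument. Here $\delta(\eta_2)$ coincides with the Bockstein of $a_2=0$. The map $\theta_1$ is injective because Petrov's degeneration makes $H^{d-1}(X,\Fil^1\Omega^\bullet_X)\to H^{d-2}(X,\Omega^1_X)$ surjective, and Serre duality makes the contraction pairing an isomorphism. Only degeneration on the special fibre enters, so no dimension bound appears at this level, and no correction of $X_2(\tau)$ by a class $\xi_0$ is needed.
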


More precisely, in odd characteristic the $W_2$-lifting
associated with any chosen quasi-$F$-splitting extends to $W_3$.
We also record a consequence of the sharper criterion of
Brantner--Taelman: if $X$ is quasi-$F$-split and
$H^d_{\cris}(X/W)$ is torsion-free, then its deformations over
local Artinian $W$-algebras are unobstructed, without a dimension
bound (Proposition~\ref{common:torsion-free}).

The vanishing of global vector fields cannot be assumed
for quasi-$F$-split strict Calabi--Yau varieties.
Addington and Bragg construct such a threefold over
$\overline{\mathbf F}_3$ with
$\dim_k H^0(X,T_X)=1$
\cite[Propositions~2.2 and~6.1]{AddingtonBragg}.
Their computation also exhibits torsion in
$H^3_{\cris}(X/W)$ \cite[Section~6]{AddingtonBragg}.
This example illustrates that the range $d\leq p+1$ in
Theorem~\ref{main:witt} allows both crystalline torsion
and nonzero global vector fields.

While preparing this paper, the author became aware of
Ward's newly posted preprint on arXiv \cite{Ward26}.
For strict Calabi--Yau threefolds of height one, Ward gives a different construction of
Witt-vector liftings by keeping a torsion coordinate
of the crystalline period point equal to zero
\cite[Theorem~4.13]{Ward26}.
The author intends to examine the precise relationship
between Ward's methods and those of the present paper
in subsequent work.

We now outline the proof, assuming $d\geq3$.
We use the period domain and obstruction comparison of
\cite[Sections~6--7]{BT}.
The first input is an integral splitting supplied by finite
Artin--Mazur height.
If this height is $h$, then
\[
 \RG(X,W\OO_X)\simeq W\oplus D_h[-d],
 \qquad D_h:=H^d(X,W\OO_X)\simeq W^{\oplus h}.
\]
Verschiebung is topologically nilpotent on $D_h$
(Lemma~\ref{common:witt}).
We use this nilpotence to split the morphism of crystalline $F$-gauges
induced by the projection $W\Omega_X^\bullet\to W\OO_X$
(Lemma~\ref{edge:lower-split}).
The resulting summand has Hodge weights $0$ and $1$.
It supplies a decomposition
\[
 C:=\RG_{\cris}(X/W)\simeq P\oplus Q,
 \qquad Q\simeq W\oplus W[-d],
\]
for which $Q\otimes_W^{\mathbf L}k\to\RG(X,\OO_X)$ is an
equivalence in $\mathcal D(k)$ (Lemma~\ref{common:split}).
Thus the Hodge quotient is lifted integrally even when
crystalline cohomology has torsion.

In the graph chart for this decomposition, the Hodge quotient
of a lifting $X_n/W_n$ takes the form
\[
 (a_n,\mathrm{id}):P_n\oplus Q_n\longrightarrow Q_n,
 \qquad P_n=P\otimes_W^{\mathbf L}W_n,
 \quad Q_n=Q\otimes_W^{\mathbf L}W_n.
\]
For the initial lifting associated with a quasi-$F$-splitting
in odd characteristic, we choose the decomposition so that
$a_2=0$ (Lemma~\ref{common:canonical}).
This is where the assumption $p>2$ enters the construction.
The zero graph condition kills the first period obstruction.
The obstruction comparison, whose injectivity follows from
Petrov's degeneration theorem and Serre duality, then gives
the $W_3$-lifting of Theorem~\ref{main:W3}
(Proposition~\ref{common:w3}).

To proceed to higher Witt levels, we prove a lifting criterion
in terms of the first Hodge edge
(Proposition~\ref{tower:extension}).
For every smooth proper lifting $Y/W_{r+1}$, with
$Y_r=Y\otimes_{W_{r+1}}W_r$, the required condition is the
surjectivity of
\[
 e_r:
 H^{d-1}(Y_r,\Fil^1\Omega^\bullet_{Y_r/W_r})
 \longrightarrow H^{d-2}(Y_r,\Omega^1_{Y_r/W_r}).
\]
By relative Serre duality, this gives injectivity of the
relative obstruction comparison
(Lemma~\ref{tower:injectivity}).
We integrate tangent directions along finite divided-power
curves and evaluate these curves to correct the obstruction
to extending an existing lifting
(Lemmas~\ref{tower:pd-curves} and~\ref{tower:correction}).
The corrections need not preserve that lifting at its highest
Witt levels. Their effect is nevertheless bounded uniformly:
for the perfect period complex
$D=\mathbf R\Hom_W(P,Q)$, choose $E\geq1$ such that
$p^E H^1(D)_{\tors}=0$.
At step $n\geq3$, the correction preserves the reduction to
$W_{\max\{2,n-E\}}$.
Consequently, the corrected liftings stabilize at every fixed
level and yield a compatible tower extending the chosen
$W_2$-lifting.
An ample line bundle extends along the tower, and formal
existence algebraizes it to a smooth projective $W$-lifting
(Lemma~\ref{common:algebraization}).

The characteristic bound and the additional vanishing assumption
at $d=p+2$ enter in proving the surjectivity of $e_r$.
When $d<p$, this already follows from Ogus's degeneration
theorem: since $Y_r$ lifts to $W_{r+1}$, the Hodge--de Rham
spectral sequence
\[
 E_1^{i,j}=H^j(Y_r,\Omega^i_{Y_r/W_r})
 \Longrightarrow H^{i+j}_{\mathrm{dR}}(Y_r/W_r)
\]
degenerates at $E_1$
\cite[Theorem~8.2.6]{Ogus94}.
In particular, $e_r$ is surjective.

To treat smaller characteristics, we use the finite-height
summand constructed in Lemma~\ref{edge:lower-split}.
A finite-level form of Ogus's crystalline comparison relates
the Nygaard filtration on crystalline cohomology to the
Hodge filtration of $Y_r$
(Lemma~\ref{edge:comparison}).
We then use the finite-height summand and its counterpart
on the Hodge side (Lemma~\ref{edge:descent}) to reduce the
surjectivity question to the matrix calculation of
Lemma~\ref{edge:kernel}.
This proves that $e_r$ is surjective provided that
\[
 H^{d-1-i}(X,\Omega_X^i)=0
 \qquad(p<i\leq d-1)
\]
(Proposition~\ref{edge:finite}).
This condition is automatic when $d\leq p+1$.
When $d=p+2$, it reduces to
\[
 H^0(X,\Omega_X^{d-1})=0,
\]
or equivalently $H^0(X,T_X)=0$.

Thus the hypotheses of Theorem~\ref{main:witt} ensure the
surjectivity of $e_r$, and the lifting criterion completes
the proof.

The organization of the paper is as follows:
Section~\ref{sec:common} constructs the finite-height summand
and the initial lifting, and proves Theorem~\ref{main:W3} and the
torsion-free case.
Section~\ref{sec:tower} establishes the lifting criterion and
the correction procedure.
Section~\ref{sec:edge} proves finite Hodge-edge surjectivity
and completes the proof of Theorem~\ref{main:witt}.

\section{Crystalline splitting and the initial lifting}
\label{sec:common}

We use Brantner--Taelman's period domain and obstruction comparison
\cite[Sections~6--7]{BT}. Their graph chart starts from a crystalline
decomposition lifting the Hodge quotient. We construct such a
decomposition for quasi-$F$-split strict Calabi--Yau varieties without
assuming crystalline torsion-freeness, using a splitting of $F$-gauges.
Under the hypotheses of Lemma~\ref{common:canonical}, we also arrange
compatibility with the initial $W_2$-lifting. The graph descriptions
and obstruction comparison below are applications of their theory;
the finite-height splitting and the initial compatibility are the
additional inputs established here.

\subsection{Notation and terminology}
Throughout this paper, $k$ is an algebraically closed field of
characteristic $p>2$, $W=W(k)$, and $W_n=W/p^nW$.
A lifting of $X$ over $W_n$ or $W$ means a smooth projective
scheme over that ring, together with an identification of
its special fibre with $X$.

We say that $X$ is \emph{quasi-$F$-split} if, for some $m\geq1$, the
map
\[
 \OO_X\longrightarrow F_*W_m\OO_X/p,
 \qquad f\longmapsto[f]^p,
\]
has an $\OO_X$-linear retraction. Equivalently, there is an additive map
$\tau:W_m\OO_X\to\OO_X$ satisfying
\[
 \tau(F(a)b)=a_0\tau(b),\qquad \tau(1)=1,
 \qquad a_0=R^{m-1}a,
\]
where $R^{m-1} \colon W_m \OO_X \to \OO_X$ is the restriction morphism.
Such a map is called a quasi-$F$-splitting of level $m$.

For a commutative ring $R$, let $\mathcal D(R)$ be the derived
$\infty$-category of $R$-modules and let $D(R)$ be its homotopy
category. Let $\operatorname{Perf}(R)\subset\mathcal D(R)$ be the
full subcategory of perfect complexes. Write $\mathcal S$ for the
$\infty$-category of spaces, and
$\operatorname{Map}_{\mathcal D(R)}(M,N)\in\mathcal S$ for the
mapping space. An equivalence in $\mathcal D(R)$ induces an
isomorphism in $D(R)$; an equivalence in $\mathcal S$ means an
equivalence of homotopy types.

All complexes are cohomologically indexed. Tensor products of
complexes, fibres, and cofibres are derived. We write $\RG$ for derived
global sections and $\Fil^i\Omega^\bullet$ for the stupid filtration
of the de Rham complex. If $K$ is a $W$-complex, we put
$K_n=K\otimes_W^{\mathbf L}W_n$.
For the rest of this section, $X$ is a quasi-$F$-split strict
Calabi--Yau variety of dimension $d\geq3$.

\subsection{Witt cohomology}
Yobuko's theorem identifies the quasi-$F$-splitting height of $X$
with the finite height $h$ of its Artin--Mazur formal group and gives
a smooth $W_2$-lifting \cite[Theorems~4.4 and~4.5]{YobukoQ}.
Set
\[
 C=\RG_{\cris}(X/W),\qquad
 N_h=\RG(X,W\OO_X),\qquad
 D_h=H^d(X,W\OO_X).
\]
The crystalline--de Rham--Witt comparison $ \RG_{\cris}(X/W) \simeq \RG (X, W\Omega_X^{\bullet})$ \cite{Illusie} gives the projection
\[
 \epsilon:C\longrightarrow N_h.
\]
The same map is obtained by evaluating crystalline cohomology on
the Witt divided-power thickenings and taking the derived inverse
limit \cite[Sections~2.1 and~3.2--3.3]{BerthelotQ}.

\begin{lemma}\label{common:witt}
There is an equivalence in $\mathcal D(W)$
\[
 N_h\simeq W\oplus D_h[-d],
\]
and $D_h$ is a free $W$-module of rank $h$.
On $D_h$, the operators $F,V$ satisfy $FV=VF=p$,
$V$ is topologically nilpotent, and
\[
 pD_h\subseteq VD_h\subseteq D_h,
 \qquad D_h/VD_h\simeq H^d(X,\OO_X)\simeq k.
\]
\end{lemma}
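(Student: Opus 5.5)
We compute $H^i(X,W\OO_X)$ degree by degree from the cohomology of the truncated Witt sheaves, and then split $N_h$ using the fact that $W$ has global dimension one.

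\emph{Degrees $0<i<d$.} The short exact sequences
\[
 0\to F^{n-1}_*\OO_X\xrightarrow{V^{n-1}} W_n\OO_X\to W_{n-1}\OO_X\to0,
\]
together with $H^i(X,\OO_X)=0$ for $0<i<d$, give by induction on $n$ that $H^i(X,W_n\OO_X)=0$ for $0<i<d$. The sequences also show $H^0(X,W_n\OO_X)=W_n(k)$, since $X$ is integral and proper over $k=\bar k$.

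\emph{Degree $d$ and the transition maps.} Since $H^{d+1}(X,\OO_X)=0$, the long exact sequence in degree $d$ reads
\[
 0\to H^{d-1}(X,W_{n-1}\OO_X)\to k\xrightarrow{V^{n-1}} H^d(X,W_n\OO_X)\xrightarrow{R} H^d(X,W_{n-1}\OO_X)\to 0 .
\]
For $n\ge 3$ the left term vanishes by the previous step, so $V^{n-1}$ is injective. The restriction maps $R$ on $H^0$ and $H^d$ are therefore surjective, and Mittag-Leffler holds. Writing $W\OO_X=R\lim W_n\OO_X$ gives $H^0(X,W\OO_X)=W$, $H^i(X,W\OO_X)=0$ for $0<i<d$, and
\[
 D_h=\lim_n H^d(X,W_n\OO_X).
\]

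\emph{Freeness of $D_h$.} This is the Artin--Mazur identification: $D_h$ is the Cartier--Dieudonn\'e module of the formal group $\Phi^d_X$. That formal group is one-dimensional with tangent space $H^d(X,\OO_X)=k$, and it has finite height $h$ by Yobuko \cite[Theorem~4.5]{YobukoQ}. Its Dieudonn\'e module is therefore free of rank $h$ over $W$, with $FV=VF=p$ and $D_h/VD_h\simeq H^d(X,\OO_X)\simeq k$. Freeness also explains why $p$-torsion would force infinite height.

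\emph{Topological nilpotence of $V$.} Since $V$ is injective and $D_h/VD_h$ has length one, $V^nD_h$ has colength $n$. As $D_h$ is free of finite rank, $p^mD_h\supseteq V^{hm}D_h$ once lengths are compared. Indeed, $D_h/pD_h$ has length $h$ and $p=FV$ gives $pD_h\subseteq VD_h$; so the $V$-adic and $p$-adic topologies agree. The inclusions $pD_h\subseteq VD_h\subseteq D_h$ follow from $p=VF$.

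\emph{The splitting of $N_h$.} The complex $N_h$ is derived $p$-complete with cohomology $W$ in degree $0$, $D_h$ in degree $d$, and zero otherwise. Since $W$ has global dimension one, every object of $\mathcal D(W)$ is equivalent to the direct sum of its shifted cohomology groups. Alternatively, the unit $W\to N_h$ splits via the truncation triangle: the only obstruction is $\Ext^{d+1}_W(D_h,W)$, which vanishes since $d+1\ge 2$. This gives $N_h\simeq W\oplus D_h[-d]$.

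The main obstacle is the Dieudonn\'e-theoretic identification of $D_h$, especially freeness with the correct $F$, $V$ and rank $h$. For this I would invoke Artin--Mazur together with Yobuko's height comparison, rather than re-derive Cartier theory.
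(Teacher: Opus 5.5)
Your overall route matches the paper's: restriction sequences to kill the middle cohomology and get surjective transition maps, Artin--Mazur plus Cartier--Dieudonn\'e theory for freeness of rank $h$, and a splitting of $N_h$ from the homological algebra of $W$. The gap is in the topological nilpotence of $V$. Equal colength of $V^{hm}D_h$ and $p^mD_h$ gives no inclusion between them. Also, $pD_h\subseteq VD_h$ only shows that the $V$-adic topology is coarser than the $p$-adic one, which is the wrong direction. The properties you actually use (free of rank $h$, $FV=VF=p$, $V$ injective, $D/VD\simeq k$) do not imply the claim. On $D=We_0\oplus We_1$ put $V(ae_0+be_1)=p\sigma^{-1}(a)e_0+\sigma^{-1}(b)e_1$ and $F(ae_0+be_1)=\sigma(a)e_0+p\sigma(b)e_1$. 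All of these properties hold, yet $We_1\subseteq V^jD$ for every $j$, so no power of $V$ maps $D$ into $pD$.

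The missing input is $V$-adic separatedness, which reflects connectedness of the formal group. This is what ``classical Cartier--Dieudonn\'e theory'' supplies in the paper's citation. You can also extract it from your own computation. The sequences $0\to W\OO_X\xrightarrow{V^n}W\OO_X\to W_n\OO_X\to0$ together with $H^{d+1}(X,W\OO_X)=0$ give $D_h/V^nD_h\simeq H^d(X,W_n\OO_X)$, so $D_h=\varprojlim_n D_h/V^nD_h$ and $\bigcap_nV^nD_h=0$. Next, the lengths of $D_h/(V^jD_h+pD_h)$ rise by at most one per step and stay constant once they stop rising. Suppose they stabilized below $h$. Then the stable images of $V$ on the finite-length modules $D_h/p^mD_h$ would be nonzero, would surject onto one another, and $V$ would be bijective on each. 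Their inverse limit would be a nonzero submodule of $\bigcap_jV^jD_h$, a contradiction. Hence the lengths reach $h$ at $j=h$, i.e.\ $V^hD_h\subseteq pD_h$ (in fact $V^hD_h=pD_h$), and this gives topological nilpotence. Alternatively, simply cite the Cartier--Dieudonn\'e statement as the paper does, instead of deriving it from freeness.
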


\begin{proof}
The restriction sequences for Witt vectors and the vanishing of the
intermediate cohomology of $\OO_X$ show that
$H^i(X,W_n\OO_X)=0$ for $i\notin\{0,d\}$.
The restriction maps on the remaining groups are surjective.
Their inverse limits compute $N_h$, and $H^0(N_h)=W$.
The module $D_h$ is the Dieudonn\'e module of the
one-dimensional Artin--Mazur formal group of $X$
\cite[Chapter~II, Corollary~4.3]{ArtinMazur77}.
Since the formal group has finite height $h$,
classical Cartier--Dieudonn\'e theory shows that
$D_h$ is a free $W$-module of rank $h$ and that
$V$ is topologically nilpotent for the $p$-adic topology
\cite[Chapter~I, Section~1, p.~96]{ArtinMazur77}.
The exact sequence for $V:W\OO_X\to W\OO_X$ gives the displayed
quotient, while $p=VF$ gives $pD_h\subseteq VD_h$.
The freeness of the two cohomology groups gives the stated
decomposition in $\mathcal D(W)$.
\end{proof}

\subsection{The finite-height summand}
\label{subsec:finite-height}

We briefly recall the Rees description of crystalline $F$-gauges
over $k$; see \cite[Sections~3.3--3.4 and~4.2]{BhattFG}.
Put
\[
 A=W[v_-,v_+]/(v_-v_+-p),\qquad
 \deg(v_-)=-1,\quad \deg(v_+)=1.
\]
A crystalline gauge over $k$ is represented by a graded
$A$-complex $M=\bigoplus_{r\in\mathbb Z}M^{(r)}$ that is
derived $p$-complete in each graded degree.
Here $r$ is the internal grading, distinct from the cohomological
degree. The actions of $v_-$ and $v_+$ give maps
\[
 v_-:M^{(r)}\longrightarrow M^{(r-1)},\qquad
 v_+:M^{(r)}\longrightarrow M^{(r+1)},
\]
whose composites in either order are multiplication by $p$.
For a filtered complex $(K,\Fil^\bullet)$ compatible with the
$p$-adic filtration, its Rees complex has
$M^{(r)}=\Fil^rK$, with $v_-$ acting by the filtration map
and $v_+$ by multiplication by $p$.

The two endpoint complexes of $M$ are
\[
 M_-:=M|_{v_-=1},\qquad M_+:=M|_{v_+=1},
\]
where specialization means derived $p$-completed base change
along the maps $A\to W$ sending $(v_-,v_+)$ to $(1,p)$ and
$(p,1)$, respectively. In each internal degree $r$, specialization
induces maps
\[
 \rho_{\pm,r}:M^{(r)}\longrightarrow M_\pm;
 \qquad \rho_\pm:=\rho_{\pm,0}.
\]
We call these the endpoint maps.

Write $\sigma:W\to W$ for the Witt Frobenius, and set
$\sigma^*L=W\otimes_{W,\sigma}^{\mathbf L}L$.
The natural map $\eta_L:L\to\sigma^*L$, $x\mapsto1\otimes x$,
is $\sigma$-semilinear.
An $F$-gauge is a gauge $M$ equipped with a Frobenius gluing
equivalence in $\mathcal D(W)$
\[
 \alpha_M:\sigma^*M_+\xrightarrow{\sim}M_-.
\]
A morphism of $F$-gauges is a morphism of gauges $f:M\to N$
together with a homotopy
\[
 f_-\circ\alpha_M\simeq\alpha_N\circ\sigma^*f_+,
\]
where $f_\pm:M_\pm\to N_\pm$ are the induced maps on the
endpoint complexes. We will use the maps in internal degree one
\[
 \operatorname{can}_{1,M}:=\rho_{-,1},\qquad
 \varphi_{1,M}:=\alpha_M\circ\eta_{M_+}\circ\rho_{+,1}.
\]
The latter is the $\sigma$-semilinear first divided Frobenius.

Let $\mathcal N_X$ be the Rees complex of the Nygaard
filtration on crystalline cohomology. Its graded pieces are
\[
 \mathcal N_X^{(r)}
 =\RG\bigl(X,\mathcal N^{\geq r}W\Omega_X^\bullet\bigr),
\]
where
\[
 \mathcal N^{\geq r}W\Omega_X^j=
 \begin{cases}
 p^{r-j-1}VW\Omega_X^j,&j<r,\\
 W\Omega_X^j,&j\geq r
 \end{cases}
\]
\cite[Definition~8.1]{BMS2}.
The divided Frobenius maps $\varphi_r=p^{-r}\varphi$, where
$\varphi|_{W\Omega_X^j}=p^jF$, give its $F$-gauge structure,
i.e., they induce the Frobenius gluing equivalence
$\alpha_{\mathcal N_X}:\sigma^*(\mathcal N_X)_+
\xrightarrow{\sim}(\mathcal N_X)_-$ in $\mathcal D(W)$.
In internal degree one, $\operatorname{can}_{1,\mathcal N_X}$
is induced by inclusion, and $\varphi_{1,\mathcal N_X}$ by
\[
 \varphi_1:
 [\,VW\OO_X\longrightarrow W\Omega_X^1
       \longrightarrow W\Omega_X^2\longrightarrow\cdots\,]
 \longrightarrow W\Omega_X^\bullet,
\]
with
\[
 \varphi_1(Va)=a,\qquad
 \varphi_1|_{W\Omega_X^j}=p^{j-1}F\quad(j\geq1).
\]
These formulas define a map of complexes by $FdV=d$ and
$dF=pFd$. Their compatibility with the endpoint description
is explained in
\cite[Example~3.4.7 and Proposition~4.4.2]{BhattFG}.

For a perfect gauge $M$, put
\[
 M_{\mathrm H}
 :=
 M\otimes_A^{\mathbf L}k[v_-],
 \qquad k[v_-]=A/(v_+).
\]
We define filtration (called the Hodge filtration of $M$) on
\[
 M_{\mathrm H}|_{v_-=1}
 \simeq M_-\otimes_W^{\mathbf L}k
\]
by
\[
 \Fil_{\mathrm H}^i(M_-\otimes_W^{\mathbf L}k)
 :=M_{\mathrm H}^{(i)},
\]
with transition maps induced by $v_-$.
Its associated graded pieces are
\[
 \begin{aligned}
 \operatorname{gr}_{\mathrm H}^iM
 &:=
 \cofib\bigl(
 M_{\mathrm H}^{(i+1)}
 \xrightarrow{v_-}
 M_{\mathrm H}^{(i)}
 \bigr)\\
 &\simeq
 \bigl(M\otimes_A^{\mathbf L}k\bigr)^{(i)},
 \qquad k=A/(v_-,v_+).
 \end{aligned}
\]

Let $\mathcal U_h$ be the Rees complex with
\[
 \mathcal U_h^{(r)}=
 \begin{cases}
 \RG(X,W\OO_X),&r\leq0,\\
 \RG(X,p^{r-1}VW\OO_X),&r\geq1,
 \end{cases}
\]
where $v_-$ acts by inclusion and $v_+$ by multiplication by $p$.
Its $F$-gauge structure is induced by the divided Frobenius
maps $p^{-r}F$ on these levels.

The following $F$-gauge splitting will produce the decomposition
required in \cite[Corollary~7.4 and Theorem~7.16]{BT} after forgetting
the additional structure. We retain that structure for the
finite-level comparison in Section~\ref{sec:edge}.

\begin{lemma}\label{edge:lower-split}
\begin{enumerate}
\item\label{edge:lower-split-model}
The projection
$W\Omega_X^\bullet\to W\OO_X$ induces a morphism of $F$-gauges
\[
 q:\mathcal N_X\longrightarrow\mathcal U_h.
\]
The minus endpoint of $\mathcal U_h$ satisfies
\[
 (\mathcal U_h)_-\simeq N_h\simeq W\oplus D_h[-d]
 \quad\text{in }\mathcal D(W).
\]
The underlying graded $A$-complex of $\mathcal U_h$
admits the following finite free model:
\refstepcounter{equation}
\[
 \begin{gathered}
 \mathcal U_h
 \simeq
 A\mathbf 1
 \oplus
 \left(Az\oplus\bigoplus_{a=1}^{h-1}Ae_a\right)[-d],\\
 \deg(\mathbf 1)=\deg(z)=0,
 \qquad
 \deg(e_a)=1.
 \end{gathered}
 \tag{\theequation}\label{eq:lower-split-free-model}
\]
Here $\deg$ denotes the internal grading, and the right-hand
side has zero differential.
In particular, $\mathcal U_h$ is perfect.
\item\label{edge:lower-split-graded}
The Hodge graded pieces of $\mathcal U_h$ are
\[
 \operatorname{gr}_{\mathrm H}^{i}\mathcal U_h
 \simeq
 \begin{cases}
 \RG(X,\OO_X)\simeq k\oplus k[-d],&i=0,\\
 (VD_h/pD_h)[-d]\simeq k^{\oplus(h-1)}[-d],&i=1,\\
 0,&i\notin\{0,1\}.
 \end{cases}
\]
Under the identifications
\[
 \operatorname{gr}_{\mathrm H}^{0}\mathcal N_X
 \simeq\RG(X,\OO_X),
 \qquad
 \operatorname{gr}_{\mathrm H}^{0}\mathcal U_h
 \simeq\RG(X,\OO_X),
\]
the map $\operatorname{gr}_{\mathrm H}^{0}q$ is the identity.

\item\label{edge:lower-split-fibre}
The fibre $\mathcal E_0:=\operatorname{fib}(q)$ admits a bounded
finite graded free $A$-model whose homogeneous generators
all have grading degree at least one. Its minus endpoint is
\[
 E_0:=(\mathcal E_0)_-\simeq\RG(X,W\Omega_X^{\geq1}).
\]
Its underlying Frobenius is $p\varphi_E$, where $\varphi_E$
acts in form degree $j\geq1$ as $p^{j-1}F$.

\item\label{edge:lower-split-section}
The morphism $q$ admits a section in the $\infty$-category
of crystalline $F$-gauges over $k$.
\end{enumerate}
\end{lemma}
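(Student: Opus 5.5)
The plan is to prove (1)--(3) by direct computation with the Rees models, and then to deduce (4) by showing that the extension class of the fibre sequence $\mathcal E_0\to\mathcal N_X\xrightarrow{q}\mathcal U_h$ vanishes. The vanishing will come from a contraction argument powered by the topological nilpotence of $V$ from Lemma~\ref{common:witt}.

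For (1), I first check that projection to form degree $0$ respects the Nygaard pieces. By definition, $\mathcal N^{\geq r}W\Omega_X^0=p^{r-1}VW\OO_X$ for $r\geq1$ and $W\OO_X$ for $r\leq0$, and these are exactly $\mathcal U_h^{(r)}$. Projection is therefore a graded $A$-map. On the $W\OO_X$-part the divided Frobenius $p^{-r}\varphi$ is $p^{-r}F$, so $q$ intertwines the Frobenius gluings and is a morphism of $F$-gauges. Its minus endpoint is $\epsilon$, and Lemma~\ref{common:witt} gives $(\mathcal U_h)_-\simeq N_h\simeq W\oplus D_h[-d]$.

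For the free model, I use $H^i(X,p^{r-1}VW\OO_X)$, which vanishes for $i\neq0,d$. Since $p^{r-1}VW\simeq W$ and $V$ is injective, degree $0$ contributes $W\cdot\mathbf 1$ in internal degrees $\leq0$ and $p^rW$ in degrees $r\geq1$, which is $A\mathbf 1$. Degree $d$ contributes the chain
\[
\cdots\supset D_h\supset VD_h\supset pVD_h\supset\cdots .
\]
Choose a basis $z,f_1,\dots,f_{h-1}$ of $D_h$ adapted to the inclusions $pD_h\subseteq VD_h\subseteq D_h$, with $VD_h=pWz\oplus\bigoplus_a Wf_a$; this is possible since $D_h/VD_h\simeq k$. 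Setting $e_a=f_a$ in internal degree $1$ then gives the stated model. Perfectness is immediate, as is the identification $\Fil^{r}=p^{r-1}VD_h$ for $r\geq1$.

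For (2), I reduce the model modulo $v_+$ and then modulo $v_-$. Each generator $A x$ contributes $k$ in its own internal degree, so $\mathbf 1,z$ give $k\oplus k[-d]$ in degree $0$ and the $e_a$ give $k^{h-1}[-d]=(VD_h/pD_h)[-d]$ in degree $1$. The identification of $\operatorname{gr}^0_{\mathrm H}$ of both $\mathcal N_X$ and $\mathcal U_h$ with $\RG(X,\OO_X)$ goes through $W\OO_X/VW\OO_X=\OO_X$. Since $q$ is the identity on the $W\OO_X$-term, $\operatorname{gr}^0_{\mathrm H}q$ is the identity.

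For (3), I first observe that $\mathcal N_X$ is perfect, because $X$ is smooth proper; this is the BMS/Bhatt syntomic perfectness. Hence the fibre $\mathcal E_0$ is perfect, and its minus endpoint is $\RG(X,W\Omega_X^{\geq1})$. Its Hodge graded pieces are computed by the fibre of $\operatorname{gr}_{\mathrm H}q$. In degree $0$ this is the fibre of the identity, which is $0$, and in negative degrees it also vanishes. For a perfect graded $A$-complex, vanishing of $\operatorname{gr}^i$ for $i\leq0$ lets me choose a minimal free model, generated by lifts of bases of the $\operatorname{gr}$, with all generators in degrees $\geq1$. The Frobenius on $\mathcal E_0$ is the restriction of $\varphi_0=\varphi$ to form degrees $\geq1$, which is $p^jF=p\cdot p^{j-1}F$.

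Step (4) is the main obstacle. A section exists iff the connecting map $\delta:\mathcal U_h\to\mathcal E_0[1]$ is nullhomotopic in $F$-gauges. I would show that the whole mapping space $\operatorname{Map}_{F\text{-gauge}}(\mathcal U_h,\mathcal E_0[1])$ is connected, or at least that $\pi_0$ vanishes. It is the equalizer of two maps
\[
\operatorname{Map}_{\mathrm{gauge}}(\mathcal U_h,\mathcal E_0[1])\rightrightarrows\operatorname{Map}_W\bigl(\sigma^*(\mathcal U_h)_+,(\mathcal E_0)_-[1]\bigr),
\]
namely $f\mapsto f_-\circ\alpha$ and $f\mapsto\alpha\circ\sigma^*f_+$.

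Using the free models, a gauge map is determined by the images of the generators $\mathbf 1,z$ (degree $0$) and $e_a$ (degree $1$). These land in internal degrees where $\mathcal E_0$, being generated in degrees $\geq1$, is of the form $v_+$ or $p$ times something. Writing the equalizer condition as $(1-\Phi)f=g$, the operator $\Phi$ is built from $p\varphi_E$ on the target and from $F^{-1}$, i.e.\ $V/p$ on $D_h$, on the source. It should be topologically nilpotent, because there is an extra factor of $p$ in $p\varphi_E$ and $V$ is topologically nilpotent on $D_h$ by Lemma~\ref{common:witt}; on the $W\mathbf 1$-summand $\sigma$ is bijective, and the factor $p$ suffices. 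Then $1-\Phi$ is invertible by the geometric series, using derived $p$-completeness, so the equalizer has vanishing $\pi_0$ and $\delta\simeq0$.

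I expect the delicate point to be bookkeeping. One has to check that the degree-one generators $e_a$, whose Frobenius involves $p^{-1}F$ on $VD_h$, still pick up enough $p$ or $V$ to make $\Phi$ contracting. If this fails, I would first try showing that $\Phi$ is merely topologically nilpotent after $h$ iterations, using $V^h\in pW\cdot\mathrm{End}(D_h)$ in the adapted basis.
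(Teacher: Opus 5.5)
Your proposal is correct and follows the paper's proof. Your operator $\Phi$ is exactly the paper's $T(f)=\varphi_E\circ f\circ V_N$, with $V_N=p\sigma^{-1}\oplus V$, acting on $K=\mathbf R\Hom_W(N_h,E_0)$. Gauge maps $\mathcal U_h\to\mathcal E_0$ identify with $K$ because the internal Hom has generators only in degrees $\geq0$. In (1), the paper splits off the unit using a $k$-point rather than your Rees-level cohomology computation; your route works once you add that free graded cohomology in degrees $0$ and $d$ forces formality. As you anticipated, $T$ itself need not be divisible by $p$, since degree-one generators hitting degree-one generators gain no factor of $p$. The paper uses precisely your fallback: $V^aD_h\subseteq pD_h$ gives $T^a\equiv0\pmod p$, so $1-T$ is invertible by derived Nakayama and the whole mapping complex vanishes.
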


\begin{proof}
\textit{(1)}
The projection
$W\Omega_X^\bullet\to W\OO_X$
preserves the Nygaard filtrations and is compatible
with their transition maps and the divided Frobenius maps.
It therefore induces $q$. 
By definition, the minus endpoint of $\mathcal U_h$ is $N_h$,
and the minus endpoint of $q$ is the projection $C\to N_h$.
Lemma~\ref{common:witt} gives
$N_h\simeq W\oplus D_h[-d]$ in $\mathcal D(W)$.
It remains to construct the graded free model
\eqref{eq:lower-split-free-model}.

Choose a $k$-point $x:\Spec k\to X$ and write
$a:X\to\Spec k$ for the structure morphism. Let $\mathbf 1$ denote the unit $F$-gauge over $k$.
By functoriality, $a^*$ and $x^*$ induce morphisms of
$F$-gauges
\[
 \mathbf 1\xrightarrow{a^*}\mathcal U_h
 \xrightarrow{x^*}\mathbf 1,
 \qquad x^*a^*=\mathrm{id}_{\mathbf 1}.
\]
Hence
$\mathcal U_h\simeq\mathbf 1\oplus\operatorname{fib}(x^*)$
in the $\infty$-category of crystalline $F$-gauges over $k$.
The Nygaard filtration on the unit summand $W$ is
\[
 \mathcal N^{\geq r}W=p^{\max(r,0)}W,
\]
since $VW=pW$.
The strict Calabi--Yau vanishings imply that $\operatorname{fib}(x^*)$
is concentrated in cohomological degree $d$, with module
$D_h$ and filtration
\[
 \mathcal N^{\geq r}D_h=
 \begin{cases}
 D_h,&r\leq0,\\
 p^{r-1}VD_h,&r\geq1.
 \end{cases}
\]
By Lemma~\ref{common:witt}, $pD_h\subseteq VD_h$ and
$D_h/VD_h\simeq k$. Choose a $W$-basis $e_0,\ldots,e_{h-1}$
adapted to this lattice, so that
\[
 VD_h=pWe_0\oplus\bigoplus_{a=1}^{h-1}We_a.
\]
For $r\geq1$, the filtration is therefore
\[
 \mathcal N^{\geq r}D_h
 =p^rWe_0\oplus\bigoplus_{a=1}^{h-1}p^{r-1}We_a.
\]
The rank-one graded $A$-module $A\ell$ with $\deg(\ell)=w$
corresponds, via its minus endpoint, to $W\ell$ with filtration
$p^{\max(r-w,0)}W\ell$ in degree $r$.
Applying this description to the unit summand and the above
basis of $D_h$ gives the model
\eqref{eq:lower-split-free-model}, with $z$ corresponding to $e_0$.
This is a bounded finite free graded $A$-model,
so $\mathcal U_h$ is perfect.

\smallskip\noindent
\textit{(2)}
Tensoring the graded free model
\eqref{eq:lower-split-free-model} with
$k=A/(v_-,v_+)$ gives
\[
 \mathcal U_h\otimes_A^{\mathbf L}k
 \simeq
 k\mathbf 1
 \oplus
 \left(kz\oplus\bigoplus_{a=1}^{h-1}ke_a\right)[-d].
\]
Since $\mathbf 1,z$ have grading degree zero and the $e_a$
have grading degree one, we obtain
\[
 \operatorname{gr}_{\mathrm H}^{i}\mathcal U_h
 \simeq
 \begin{cases}
 k\oplus(D_h/VD_h)[-d]
   \simeq k\oplus k[-d],&i=0,\\
 (VD_h/pD_h)[-d]
   \simeq k^{\oplus(h-1)}[-d],&i=1,\\
 0,&i\notin\{0,1\}.
 \end{cases}
\]

By \cite[Theorem~3.3.5(6)]{BhattFG}, we have
\[
 \operatorname{gr}_{\mathrm H}^{0}\mathcal N_X
 \simeq\RG(X,\OO_X).
\]
Under this identification, the morphism
$\operatorname{gr}_{\mathrm H}^{0}q$ is induced by
the map on $W\OO_X/VW\OO_X=\OO_X$
coming from the projection
$W\Omega_X^\bullet\to W\OO_X$.
This map is the identity. Hence
\[
 \begin{gathered}
 \operatorname{gr}_{\mathrm H}^{0}\mathcal N_X
 \simeq\RG(X,\OO_X)
 \simeq\operatorname{gr}_{\mathrm H}^{0}\mathcal U_h,\\
 \operatorname{gr}_{\mathrm H}^{0}q
 =\mathrm{id}_{\RG(X,\OO_X)}
 \quad\text{under these identifications}.
 \end{gathered}
\]
\smallskip\noindent
\textit{(3)}
By \cite[Theorem~3.3.5(6) and Remark~3.3.6]{BhattFG},
$\mathcal N_X$ is perfect and
\begin{equation}
\label{eqn:Nygaard's Hodge}
 \operatorname{gr}_{\mathrm H}^{i}\mathcal N_X
 \simeq\RG(X,\Omega^i_{X/k})[-i]=0
 \qquad(i\notin[0,d]).
\end{equation}
Parts (1) and (2) imply that $\mathcal E_0$ is perfect and
\[
 \operatorname{gr}_{\mathrm H}^{i}\mathcal E_0\simeq0
 \qquad(i\leq0).
\]
To translate this vanishing into a statement about generators,
use \cite[Lemma~3.4.11(1)]{BhattFG} to take an algebraic
graded $A$-model. Since $A$ is graded local with homogeneous
maximal ideal $\mathfrak m=(v_-,v_+)$,
\cite[Lemma~4.10(ii)]{DellAmbrogioStevenson2013} supplies a
minimal graded free resolution $P^\bullet$, with
$d(P^n)\subseteq\mathfrak mP^{n+1}$.
The complex $P^\bullet\otimes_A k$ has zero differential;
finite Tor amplitude and graded Nakayama therefore make
$P^\bullet$ bounded.
The above vanishing then gives
\[
 (P^n\otimes_A k)^{(i)}
 =H^n(\operatorname{gr}_{\mathrm H}^{i}\mathcal E_0)=0
 \qquad(i\leq0),
\]
so every homogeneous generator of $P^\bullet$ has grading
degree at least one.

At the minus endpoint, $q$ is the projection
$\RG(X,W\Omega_X^\bullet)\to\RG(X,W\OO_X)$.
Its fibre is $E_0=\RG(X,W\Omega_X^{\geq1})$.
On this complex, crystalline Frobenius factors as
$p\varphi_E$, with
\[
 \varphi_E|_{W\Omega_X^j}=p^{j-1}F\quad(j\geq1).
\]
The identity $dF=pFd$ shows that $\varphi_E$ is a map of
complexes \cite{IR}. The first Nygaard level of the fibre is
also $E_0$, so its first divided Frobenius is $\varphi_E$.

\smallskip\noindent
\textit{(4)}
We show that the entire mapping complex from $\mathcal U_h$
to $\mathcal E_0$ vanishes. Let
\[
 \begin{gathered}
 \mathcal H:=\mathbf R\underline{\operatorname{Hom}}_A
       (\mathcal U_h,\mathcal E_0),\\
 K:=\mathbf R\operatorname{Hom}_W(N_h,E_0),
 \end{gathered}
\]
where $\mathcal H$ carries the induced Frobenius gluing.
By (1) and (3), a graded free model for $\mathcal H$ has
homogeneous generators of grading degrees $j-i\geq0$,
where $i\in\{0,1\}$ and $j\geq1$.
For a generator $\ell$ of grading degree $w\geq0$,
the minus endpoint map is
\[
 (A\ell)^{(0)}=Wv_-^w\ell\xrightarrow{\sim}W\ell,
 \qquad v_-^w\ell\longmapsto\ell.
\]
Applying this term by term gives an equivalence in $\mathcal D(W)$
\[
 \rho_-:\mathcal H^{(0)}
 \xrightarrow{\sim}\mathcal H_-.
\]
Since $\mathcal U_h$ is perfect, endpoint base change gives
a canonical equivalence in $\mathcal D(W)$
\[
 c_{\mathcal H}:\mathcal H_-
 \xrightarrow{\sim}
 \mathbf R\operatorname{Hom}_W
       ((\mathcal U_h)_-,(\mathcal E_0)_-)
 \simeq K,
\]
where $(\mathcal U_h)_-\simeq N_h$ and
$(\mathcal E_0)_-\simeq E_0$.

Using $c_{\mathcal H}$ to identify $\mathcal H_-$ with $K$,
define
\[
 T:=
 c_{\mathcal H}\circ
 \bigl(\alpha_{\mathcal H}\circ\eta_{\mathcal H_+}
       \circ\rho_+\bigr)
 \circ\rho_-^{-1}\circ c_{\mathcal H}^{-1}:
 K\longrightarrow K.
\]
Thus $T$ is the plus endpoint map followed by Frobenius gluing,
transported to the underlying $\mathbb Z_p$-complex $K$.
The gluing formula \cite[Remark~4.2.8]{BhattFG} now reads
\refstepcounter{equation}
\[
 \mathbf R\operatorname{Hom}_{F\text{-}\mathrm{Gauge}}
       (\mathcal U_h,\mathcal E_0)
 \simeq\operatorname{fib}(1-T:K\longrightarrow K)
 \quad\text{in }D(\mathbb Z_p).
\tag{\theequation}\label{eq:common-gauge-mapping-fibre}
\]

It remains to compute $T$ and prove that $1-T$ is invertible.
Under the point-induced splitting $N_h=W\oplus D_h[-d]$, put
\[
 V_N:=p\sigma^{-1}\oplus V.
\]
This is $\sigma^{-1}$-semilinear, and the first Nygaard level
$\mathcal U_h^{(1)}$ is identified with $V_NN_h$.
Since $FV=p$, its first divided Frobenius is the
$\sigma$-semilinear isomorphism
\[
 \varphi_{1,\mathcal U_h}:V_NN_h\xrightarrow{\sim}N_h,
 \qquad\varphi_{1,\mathcal U_h}(V_Nx)=x.
\]
Using the graded free models above and the associated endpoint
and Hom complexes, the map
$c_{\mathcal H}\circ\rho_-:\mathcal H^{(0)}\to K$
is an isomorphism of complexes.
For $n\in\mathbb Z$ and a cochain
\[
 f\in K^n
 =\Hom_W^n(N_h,E_0)
 =\prod_i\Hom_W(N_h^i,E_0^{i+n}),
\]
let
\[
 \widetilde f
 :=(c_{\mathcal H}\circ\rho_-)^{-1}(f)
 \in(\mathcal H^{(0)})^n.
\]
Its internal-degree-one component
\[
 \widetilde f^{(1)}
 \in\Hom_W^n
 \bigl(\mathcal U_h^{(1)},\mathcal E_0^{(1)}\bigr)
\]
satisfies
\[
 \operatorname{can}_{1,\mathcal E_0}\circ\widetilde f^{(1)}
 =
 f\circ\operatorname{can}_{1,\mathcal U_h}.
\]
Under the identifications
$\mathcal U_h^{(1)}=V_NN_h$ and
$\mathcal E_0^{(1)}=E_0$,
the maps $\operatorname{can}_{1,\mathcal U_h}$ and
$\operatorname{can}_{1,\mathcal E_0}$ are inclusion and
the identity, respectively. Hence
\[
 \widetilde f^{(1)}
 =
 f|_{V_NN_h}
 \in\Hom_W^n(V_NN_h,E_0).
\]
By the gluing description of $F$-gauges
\cite[Remark~4.2.4]{BhattFG}, Frobenius gluing on internal Hom
is given by postcomposition with the target gluing map
and precomposition with the inverse source gluing map.
Together with the definition of $T$ and the above identification
of $\widetilde f^{(1)}$, this gives
\refstepcounter{equation}
\[
 \begin{gathered}
 T(f)\circ\varphi_{1,\mathcal U_h}
 =\varphi_E\circ f|_{V_NN_h},\\
 \text{hence}\qquad T(f)=\varphi_E\circ f\circ V_N.
 \end{gathered}
 \tag{\theequation}\label{eq:common-T-formula}
\]
Both factors are defined over $W$. Their opposite
semilinearities make $T(f)$ $W$-linear, while
$T(af)=\sigma(a)T(f)$. Thus $T$ is $\sigma$-semilinear on $K$
and $1-T$ is an endomorphism of its underlying
$\mathbb Z_p$-complex.

Topological nilpotence of $V$ gives an integer $a\geq1$ with
$V^aD_h\subseteq pD_h$. Hence $V_N^aN_h\subseteq pN_h$,
and \eqref{eq:common-T-formula} gives
\[
 T^a(f)=\varphi_E^a\circ f\circ V_N^a\equiv0\pmod p.
\]
Consequently $1-T$ is invertible modulo $p$, with inverse
$1+T+\cdots+T^{a-1}$. The complex $K$ is derived $p$-complete,
so derived Nakayama \cite[Tag~0G1U]{Stacks} makes $1-T$ an
equivalence in $\mathcal D(\mathbb Z_p)$.
By \eqref{eq:common-gauge-mapping-fibre}, the mapping complex
from $\mathcal U_h$ to $\mathcal E_0$ therefore vanishes.
In particular, the connecting morphism
$\mathcal U_h\to\mathcal E_0[1]$ in the fibre triangle of $q$
is zero, which gives the required section.
\end{proof}

\subsection{Hodge quotients}

The next lemma supplies the crystalline decomposition assumed in
\cite[Corollary~7.4 and Theorem~7.16]{BT}. It plays the role of the
decomposition constructed in the proof of \cite[Corollary~7.5]{BT},
with finite height replacing the torsion-freeness hypothesis used there.

\begin{lemma}\label{common:split}
The projection $\epsilon:C\to N_h$ has a section in $D(W)$.
There is a decomposition in $\mathcal D(W)$
\[
 C\simeq P\oplus Q,\qquad Q:=W\oplus W[-d],
\]
and an equivalence in $\mathcal D(k)$
\[
 \alpha:Q_1=Q\otimes_W^{\mathbf L}k
 \xrightarrow{\sim}\RG(X,\OO_X)
\]
such that the following diagram commutes in $D(k)$:
\[
\begin{tikzcd}[column sep=large]
 C_1 \arrow[r,"\pi_1"] \arrow[d,"\sim"'] &
 Q_1 \arrow[d,"\alpha"] \\
 \RG_{\mathrm{dR}}(X/k) \arrow[r,"q_{\mathrm H}"'] &
 \RG(X,\OO_X).
\end{tikzcd}
\]
Here $\pi_1=\operatorname{pr}_Q\otimes_W^{\mathbf L}k$,
the left vertical arrow is the crystalline--de Rham comparison,
and $q_{\mathrm H}$ is induced by the projection
$\Omega_X^\bullet\to\OO_X[0]$.
Both $P$ and $Q$ are perfect.
\end{lemma}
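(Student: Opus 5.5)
The plan is to obtain everything by passing to minus endpoints in Lemma~\ref{edge:lower-split}. By part~\eqref{edge:lower-split-section}, $q:\mathcal N_X\to\mathcal U_h$ has a section $s$ in crystalline $F$-gauges. Applying the minus endpoint functor gives $s_-:N_h\to C$ with $\epsilon\circ s_-\simeq\mathrm{id}$. Here we use that $q_-=\epsilon$, as noted in the proof of part~\eqref{edge:lower-split-model}. This section of $\epsilon$ in $D(W)$ is the first claim. It yields $C\simeq E_0\oplus N_h$ in $\mathcal D(W)$, with $E_0=\RG(X,W\Omega_X^{\geq1})$ the fibre. By Lemma~\ref{common:witt}, $N_h\simeq W\oplus D_h[-d]$ with $D_h$ free of rank $h$.

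To cut $N_h$ down to $Q=W\oplus W[-d]$, I would use the adapted basis $e_0,\dots,e_{h-1}$ of $D_h$ from the proof of part~\eqref{edge:lower-split-model}. Then $D_h=We_0\oplus D'$ with $D'=\bigoplus_{a\geq1}We_a$, and the image of $D'$ in $D_h/VD_h\simeq k$ is zero. Set $Q:=W\mathbf 1\oplus We_0[-d]$ and $P:=E_0\oplus D'[-d]$. Both are perfect: $Q$ and $D'[-d]$ are finite free, and $E_0$ is the fibre of a map of perfect complexes, since $C$ is perfect for smooth proper $X$ and $N_h$ is perfect by Lemma~\ref{common:witt}.

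Next I would check that $Q_1\to\RG(X,\OO_X)$ is an equivalence and that the square commutes. Reduce the composite $C\xrightarrow{\epsilon}N_h\to\RG(X,\OO_X)$ mod $p$. Here the second map is $W\OO_X\to W\OO_X/VW\OO_X=\OO_X$, and the reduction identifies with $q_{\mathrm H}$ under the crystalline--de Rham comparison. This compatibility is the de Rham--Witt statement $W\Omega^\bullet_X/p\to\Omega^\bullet_X\to\OO_X$; equivalently, it is the identification $\operatorname{gr}^0_{\mathrm H}q=\mathrm{id}$ of part~\eqref{edge:lower-split-graded}. The composite factors through $\operatorname{pr}_{N_h}$, and hence vanishes on $E_0$. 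It also kills $D'[-d]$, because $D'\subseteq VD_h$. So the composite factors through $\operatorname{pr}_Q$. Define $\alpha$ as the induced map $Q_1\to\RG(X,\OO_X)$. On $H^0$ it sends $1\mapsto 1$. On $H^d$ it sends $e_0$ to a generator of $D_h/VD_h\simeq H^d(X,\OO_X)$. Hence $\alpha$ is an isomorphism on cohomology, and therefore an equivalence in $\mathcal D(k)$. The square then commutes in $D(k)$ by construction.

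The main obstacle is the compatibility used in the last step. Specifically, I need that the mod-$p$ reduction of $N_h\to\RG(X,\OO_X)$ is the map $N_h\otimes^{\mathbf L}k\to\RG(X,W\OO_X/V)$ and is compatible with $q_{\mathrm H}$. I also need care that $\RG(X,W\OO_X)\otimes^{\mathbf L}k$ is not itself $\RG(X,\OO_X)$; there is the extra $D'$ part. I would handle both points by working through the Hodge-graded pieces of the gauge model in part~\eqref{edge:lower-split-graded}: the degree-zero graded piece is exactly $Q_1$, and the degree-one piece is exactly $D'/p$.
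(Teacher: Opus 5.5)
Your proposal is correct and takes essentially the same route as the paper. The paper also uses the section from Lemma~\ref{edge:lower-split} and cuts $N_h$ down to $Q$ via $\lambda=\mathrm{id}_W\oplus\ell[-d]$, where $\ell$ is a $W$-linear lift of $D_h/pD_h\twoheadrightarrow D_h/VD_h\simeq k$. This is your coordinate functional dual to $e_0$, so $P=\operatorname{fib}(\lambda\epsilon)=E_0\oplus\ker\ell[-d]$, and the paper identifies the reduction with $q_{\mathrm H}$ by the same de Rham--Witt compatibility. The only difference is that the paper phrases the choice through an arbitrary lift $\ell$, a flexibility it uses later in Lemma~\ref{common:canonical} to match a given $W_2$-lifting.
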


\begin{proof}
Lemma~\ref{edge:lower-split} gives a section $s:N_h\to C$ of
$\epsilon$ after forgetting the $F$-gauge structure.
Fix $H^d(X,\OO_X)\simeq k$ and lift the natural quotient
\[
 \bar r:D_h/pD_h\twoheadrightarrow D_h/VD_h\simeq k
\]
to a $W$-linear functional $\ell:D_h\to W$.
Since $D_h$ is finite free and $\bar r$ is surjective, such a lift
exists and is surjective. Thus
\[
 \lambda:=\mathrm{id}_W\oplus\ell[-d]:N_h\longrightarrow Q
\]
has a section $t$, and $\pi:=\lambda\epsilon$ has section $st$.
Set $P=\operatorname{fib}(\pi)$ to obtain $C\simeq P\oplus Q$.

Under $Q_1\simeq k\oplus k[-d]\simeq\RG(X,\OO_X)$, the
reduction of $\lambda$ is $\mathrm{id}_k\oplus\bar r[-d]$,
the natural map $N_h\otimes_W^{\mathbf L}k\to\RG(X,\OO_X)$.
Compatibility of the crystalline--de Rham--Witt comparison with
the projection $W\Omega_X^\bullet\to W\OO_X$
identifies the reduction of $\pi$ with the Hodge quotient.
Finally, $P$ is perfect because $C$ and $Q$ are perfect.
\end{proof}

Petrov's theorem gives degeneration of the Hodge--de Rham and
conjugate spectral sequences \cite[Theorem~1.1]{Petrov}. In particular,
\begin{equation}
\label{eqn:Petrov}
 H^{d-1}\bigl(X,\Fil^1\Omega_X^\bullet\bigr)
 \longrightarrow H^{d-2}(X,\Omega_X^1)
\end{equation}
is surjective.

The quotient and fibre descriptions below are a special case of
the graph chart in \cite[Lemma~7.1]{BT}. We record the coordinates
and the fibre inclusion for later use.

\begin{lemma}\label{common:graph}
Let $X_n/W_n$ be a smooth proper lifting of $X$, and fix
a decomposition $C\simeq P\oplus Q$ as in
Lemma~\ref{common:split}. Then:
\begin{enumerate}
\item
We have
\[
 \omega_{X_n/W_n}\simeq\OO_{X_n},\qquad
 V_n:=\RG(X_n,\OO_{X_n})\simeq W_n\oplus W_n[-d].
\]

\item
The Hodge quotient $q_{\mathrm H,n}:C_n\to V_n$
restricts to an equivalence
$Q_n\xrightarrow{\sim}V_n$ in $\mathcal D(W_n)$.
Using its inverse to identify $V_n$ with $Q_n$, the quotient
has the form
\[
 (a_n,\mathrm{id}):P_n\oplus Q_n\longrightarrow Q_n,
 \qquad a_n:P_n\longrightarrow Q_n.
\]

\item
The fibre
\[
 K_n:=\RG(X_n,\Fil^1\Omega_{X_n/W_n}^\bullet)
\]
is equivalent to $P_n$ in $\mathcal D(W_n)$.
Under this equivalence and the fixed decomposition
$C_n\simeq P_n\oplus Q_n$, the natural map $K_n\to C_n$
corresponds to
\[
 (\mathrm{id},-a_n):P_n\longrightarrow P_n\oplus Q_n.
\]
The quotient and fibre identifications commute with reduction.
\end{enumerate}
\end{lemma}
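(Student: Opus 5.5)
We treat the three parts in order, working throughout with the crystalline--de Rham comparison $C_n\simeq\RG_{\mathrm{dR}}(X_n/W_n)$ for the smooth proper lifting $X_n$. This comparison is compatible with reduction to $k$.

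For (1), we argue by induction on the length via cohomology and base change. The vanishing $H^i(X,\OO_X)=0$ for $0<i<d$, together with the exact sequences $0\to p^{m}\OO_{X_{m+1}}\to\OO_{X_{m+1}}\to\OO_{X_m}\to0$ (the kernel being $\OO_X$), gives $H^i(X_n,\OO_{X_n})=0$ for $0<i<d$. It also shows that $H^0$ and $H^d$ are free $W_n$-modules of rank one, since their lengths are $n$ and they are generated by a lift of a generator mod $p$. Because $\RG(X_n,\OO_{X_n})$ is perfect with cohomology concentrated in degrees $0$ and $d$, both free, we get $V_n\simeq W_n\oplus W_n[-d]$. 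For the canonical bundle, $\omega_X\simeq\OO_X$ and $H^1(X,\OO_X)=0$ (as $d\geq3$). The obstruction to lifting a trivialisation from $X_m$ to $X_{m+1}$ lies in $H^1(X,\OO_X)=0$, so $\omega_{X_n/W_n}$ is a deformation of $\OO_X$, and deformations of line bundles are classified by $H^1(X,\OO_X)=0$. Hence $\omega_{X_n/W_n}\simeq\OO_{X_n}$.

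For (2), consider the composite $Q_n\to C_n\xrightarrow{q_{\mathrm H,n}}V_n$ of perfect $W_n$-complexes. By derived Nakayama, it suffices to check that it is an equivalence after $\otimes^{\mathbf L}_{W_n}k$. Base change for the Hodge quotient identifies this reduction with $Q_1\to C_1\to\RG(X,\OO_X)$. By Lemma~\ref{common:split}, this composite is $\alpha$, which is an equivalence. Identifying $V_n$ with $Q_n$ through this equivalence, the restriction of $q_{\mathrm H,n}$ to $Q_n$ becomes $\mathrm{id}$, and we define $a_n$ as its restriction to $P_n$. This gives the form $(a_n,\mathrm{id})$.

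For (3), the fibre sequence $\Fil^1\Omega^\bullet_{X_n/W_n}\to\Omega^\bullet_{X_n/W_n}\to\OO_{X_n}$ shows that $K_n\simeq\operatorname{fib}(a_n,\mathrm{id})$. For a map of the form $(a,\mathrm{id}):P\oplus Q\to Q$, the fibre is identified with $P$ via $(\mathrm{id},-a):P\to P\oplus Q$. Indeed, $(a,\mathrm{id})\circ(\mathrm{id},-a)=0$ with a canonical null-homotopy, and the induced map $P\to\operatorname{fib}$ is an equivalence. The latter holds because its cofibre maps to $Q$ through $\mathrm{id}$, by the octahedral axiom or a direct splitting argument. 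Compatibility with reduction follows because every identification used (comparison, the Hodge quotient, base change of fibres) commutes with $-\otimes_{W_n}^{\mathbf L}W_m$, and the decomposition $C\simeq P\oplus Q$ is fixed over $W$.

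The main obstacle is ensuring that the reduction of the Hodge quotient really is $q_{\mathrm H}$ under the comparison. This needs the crystalline--de Rham comparison for $X_n$ to be compatible, after base change to $k$, with the one for $X$ fixed in Lemma~\ref{common:split}, including on the Hodge quotient. It also needs the choice of the inverse identifications to be consistent as $n$ varies. We would handle this with base change for the de Rham complex and its stupid filtration, $\RG(X_n,\Fil^1\Omega^\bullet)\otimes^{\mathbf L}_{W_n}W_m\simeq\RG(X_m,\Fil^1\Omega^\bullet)$, which holds because the $\Omega^i_{X_n/W_n}$ are flat.
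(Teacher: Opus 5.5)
Your proposal is correct and follows the paper's proof: derived Nakayama applied to $\beta_n=q_{\mathrm H,n}|_{Q_n}$ using the commutative square of Lemma~\ref{common:split}, then the graph form $(a_n,\mathrm{id})$, then identifying the fibre with $P_n$ via $(\mathrm{id},-a_n)$. The only difference is minor. You compute $V_n\simeq W_n\oplus W_n[-d]$ by d\'evissage along $0\to\OO_X\to\OO_{X_{m+1}}\to\OO_{X_m}\to0$ plus the vanishing of $\Ext^{d+1}_{W_n}(H^d,H^0)$, whereas the paper reads it off from the equivalence $\beta_n:Q_n\xrightarrow{\sim}V_n$; your section-lifting argument for $\omega_{X_n/W_n}$ is the same as the paper's injectivity of $\operatorname{Pic}(X_n)\to\operatorname{Pic}(X)$.
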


\begin{proof}
The vanishing of $H^1(X,\OO_X)$ makes
$\operatorname{Pic}(X_n)\to\operatorname{Pic}(X)$ injective,
so $\omega_{X_n/W_n}\simeq\OO_{X_n}$.
By derived base change and Lemma~\ref{common:split}, the map
$\beta_n:=q_{\mathrm H,n}|_{Q_n}:Q_n\to V_n$ reduces to an
equivalence in $\mathcal D(k)$.
Derived Nakayama makes $\beta_n$ an equivalence
in $\mathcal D(W_n)$, which also gives
$V_n\simeq W_n\oplus W_n[-d]$.

As in the graph chart of \cite[Lemma~7.1]{BT}, define
\begin{equation}
\label{eqn:an}
 a_n:=\beta_n^{-1}\circ q_{\mathrm H,n}|_{P_n}.
\end{equation}
Then $\beta_n^{-1}q_{\mathrm H,n}=(a_n,\mathrm{id})$.
The Hodge filtration triangle identifies $K_n$ with the fibre
of this quotient, namely $P_n$ with inclusion
$(\mathrm{id},-a_n)$. These constructions commute with reduction.
\end{proof}

\subsection{The initial period}
For a chosen quasi-$F$-splitting $\tau$ of level $m$, let
$X_2(\tau)$ denote the lifting of Achinger--Zdanowicz \cite{AZ} (cf.\ \cite{YobukoQ}):
\[
 \OO_{X_2(\tau)}=W_{m+1}\OO_X/V(\ker\tau)
 \qquad\text{\cite[Corollary~A.1.2]{AZ}}.
\]

We next refine the decomposition so that the specified $W_2$-lifting
has zero graph coordinate. This supplies the initial condition for
our application of the period-space methods of \cite[Sections~6--7]{BT};
the required compatibility is verified using crystalline evaluation.

\begin{lemma}\label{common:canonical}
Suppose that $p>2$ and $X_2^0=X_2(\tau)$ for a chosen
quasi-$F$-splitting.
The decomposition $C \simeq P \oplus Q$ in Lemma~\ref{common:split} can be chosen
so that the map $a_2:P_2\to Q_2$ defined by \eqref{eqn:an}
for the Hodge quotient of $X_2^0$ vanishes.
\end{lemma}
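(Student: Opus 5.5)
We would prove this by exploiting the freedom in the choices of Lemma~\ref{common:split}: the section $s$ of $\epsilon$ and the lift $\ell$ of $\bar r$. Changing the decomposition $C\simeq P\oplus Q$ amounts to changing the splitting map $\pi:C\to Q$ by a map $\delta:P\to Q$, which changes the graph coordinate of any lifting by $a_n\mapsto a_n-\delta_n$. So it suffices to show that the given $a_2:P_2\to Q_2$ lifts to an integral map $\delta:P\to Q$. Replacing the section $st$ by the graph of $\delta$ (equivalently, replacing $\pi$ by $\pi+\delta\circ\operatorname{pr}_P$ and keeping $P=\operatorname{fib}$) then gives a new decomposition with $a_2=0$. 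The reduction mod $p$ is unchanged, because $a_1=0$ automatically: the special fibre is $X$ and the diagram of Lemma~\ref{common:split} holds. Hence $a_2$ is divisible by $p$, that is, it comes from a map $P_1\to Q_1$ via $p:Q_1\to Q_2$. The task is therefore to lift this class in $\operatorname{Hom}_{D(k)}(P_1,Q_1)$ to $\operatorname{Hom}_{D(W)}(P,Q)$, or at least to produce some integral $\delta$ with $\delta_2=a_2$.

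Rather than argue abstractly, which fails when $\mathbf R\Hom(P,Q)$ has torsion, we would compute the Hodge quotient of $X_2(\tau)$ directly via crystalline evaluation. The structure sheaf $\OO_{X_2(\tau)}=W_{m+1}\OO_X/V(\ker\tau)$ is a quotient of Witt vectors. This gives a natural map $W_{m+1}\OO_X\to\OO_{X_2(\tau)}$, and hence a map $N_h\to\RG(W_{m+1}\OO_X)\to V_2=\RG(X_2,\OO_{X_2})$. The key step is to check that the Hodge quotient $q_{\mathrm H,2}:C_2\to V_2$ factors as
\[
 C_2\xrightarrow{\epsilon_2}N_{h,2}\longrightarrow V_2 .
\]
Evaluating crystalline cohomology of $X$ on the PD thickening $X\hookrightarrow X_2$ gives $C_2\simeq\RG_{\mathrm{dR}}(X_2/W_2)$. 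The map $\OO_{X_2}\to W_{m+1}\OO_X$-type structure (a PD morphism from the thickening $\Spec\OO_{X_2}$ to the Witt thickening, after compatible Frobenius twisting) then makes the de Rham-to-$\OO$ projection compatible with the Witt projection $\epsilon$, in line with the remark after the definition of $\epsilon$ on evaluation at Witt thickenings \cite{BerthelotQ}. Here $p>2$ is used: the ideal $V(\ker\tau)$, or $pW$, has divided powers that are topologically nilpotent, and the comparison map respects them. This is exactly where $p=2$ breaks down.

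Once the factorisation holds, the Hodge quotient of $X_2$ kills $\operatorname{fib}(\epsilon)=\RG(X,W\Omega^{\geq1})$ reduced to $W_2$. We then choose $\pi=\lambda\epsilon$ with $\lambda:N_h\to Q$ chosen so that $\lambda_2$ agrees with $\beta_2^{-1}\circ(N_{h,2}\to V_2)$. This is possible because $N_{h,2}\to V_2\simeq W_2\oplus W_2[-d]$ is given in degrees $0$ and $d$ by maps of free modules. Namely, $W_2\to W_2$ is the identity, and $D_h/p^2\to W_2$ lifts to a functional $\ell:D_h\to W$, which still reduces to $\bar r$ and is therefore surjective. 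Then $q_{\mathrm H,2}=\lambda_2\epsilon_2$ vanishes on $P_2$, so $a_2=0$ by \eqref{eqn:an}. The main obstacle is the crystalline evaluation in the key step: identifying the Hodge quotient of $X_2(\tau)$ with the map through $\RG(X,W_{m+1}\OO_X)$, compatibly with the de Rham--Witt comparison.
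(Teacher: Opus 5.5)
Your route is the paper's route. You factor $q_{\mathrm H,2}\circ r_2$ as $b_2\circ\epsilon$ by naturality of crystalline evaluation along the quotient $\pi:W_{m+1}\OO_X\twoheadrightarrow\OO_{X_2(\tau)}$. You then lift the degree-$d$ component $\ell_2:D_h\to W_2$ of $b_2$ to $\ell:D_h\to W$ and use $\lambda=\mathrm{id}_W\oplus\ell[-d]$ in Lemma~\ref{common:split}. You were right to discard the abstract first paragraph.

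\textbf{The gap.} It sits exactly at the step you flag as the main obstacle and leave unproved. You need $X_2(\tau)\to(|X|,W_{m+1}\OO_X)$ to be a morphism of PD thickenings of $X$, i.e.\ $\pi$ must carry the Witt divided powers on $VW_m\OO_X$ to the canonical divided powers on $p\OO_{X_2(\tau)}$. Without this, crystalline naturality does not apply, and this is the only place where $p>2$ is used. Your stated justification does not address it:
\begin{itemize}
\item $V(\ker\tau)$ is the kernel of $\pi$, not one of the relevant PD ideals.
\item Topological nilpotence of the divided powers plays no role.
\item No Frobenius twist is involved. Since $\ker\pi=V(\ker\tau)\subseteq VW_m\OO_X=\ker R^m$, the special fibre of $X_2(\tau)$ is identified with $X$ through $R^m$. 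So $\pi$ is directly a map of PD thickenings of $X$ over $W_{m+1}$.
\end{itemize}

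\textbf{What is needed.} A short computation supplies the missing step.
\begin{enumerate}
\item Since $\tau$ is additive with values in an $\mathbb F_p$-module, $pW_m\OO_X\subseteq\ker\tau$.
\item From $(Vb)^j=p^{j-1}V(b^j)$ you get $\gamma_j(Vb)=V\bigl(\tfrac{p^{j-1}}{j!}b^j\bigr)$. For $p>2$ and $j\geq2$ one has $p^{j-1}/j!\in p\mathbb Z_p$. Hence $\gamma_j(Vb)\in V(pW_m\OO_X)\subseteq V(\ker\tau)=\ker\pi$.
\item On the other side, $\gamma_j(pu)=\tfrac{p^j}{j!}u^j$, and $p^j/j!\in p^2\mathbb Z_p$ for $p>2$, $j\geq2$. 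So this vanishes in the $W_2$-algebra $\OO_{X_2(\tau)}$.
\end{enumerate}
Thus $\pi(\gamma_j(Vb))=0=\gamma_j(\pi(Vb))$ for all $j\geq2$, and $\pi$ is a PD morphism.

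For $p=2$ and $j=2$ both vanishing statements fail, since $\gamma_2(Vb)=V(b^2)$ and $\gamma_2(2u)=2u^2$. That is the actual reason for the hypothesis $p>2$.

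With this step in place, the rest of your argument goes through as in the paper. In particular, $\ell_2$ reduces to $\bar r$, so $\ell$ is surjective and $q_{\mathrm H,2}|_{P_2}=0$.
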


\begin{proof}
Put $V_2:=\RG(X_2^0,\OO_{X_2^0})$, and let
$q_{\mathrm H,2}:C_2\to V_2$ be its Hodge quotient.
Writing $r_2:C\to C_2$ for reduction, we first show that
$q_{\mathrm H,2}\circ r_2=b_2\circ\epsilon$
for a $W$-linear map $b_2:N_h\to V_2$.

Let
\[
 \pi:W_{m+1}\OO_X\twoheadrightarrow
 \OO_{X_2^0}=W_{m+1}\OO_X/V(\ker\tau)
\]
be the quotient map defining $X_2^0$.
We show that $\pi$ is compatible with the canonical
divided power structures on $VW_m\OO_X$ and
$p\OO_{X_2^0}$.
Write $\gamma_j^{\mathrm{can}}$ for the canonical divided powers
on $p\OO_{X_2^0}$, induced by those on $pW_2$.
Since $p>2$, we have $p^j/j!\in p^2\mathbb Z_p$ for $j\geq2$.
Thus, for every local section $u$ of $\OO_{X_2^0}$,
\refstepcounter{equation}
\[
 \gamma_j^{\mathrm{can}}(pu)
 =
 \frac{p^j}{j!}u^j
 =
 0
 \qquad(j\geq2).
 \tag{\theequation}\label{eq:initial-pd-vanishing}
\]
Since $\tau$ is additive with values in $\OO_X$, we have
\[
 \tau(pz)=p\tau(z)=0
 \qquad(z\in W_m\OO_X).
\]
Thus $pW_m\OO_X\subseteq\ker\tau$.
For $p>2$ and $j\geq2$, we have
$p^{j-1}/j!\in p\mathbb Z_p$.
The formula for the Witt divided powers therefore gives,
for every local section $b$ of $W_m\OO_X$,
\[
 \gamma_j(Vb)
 =
 V\!\left(\frac{p^{j-1}}{j!}b^j\right)
 \in V(pW_m\OO_X)
 \subseteq V(\ker\tau)
 =\ker\pi.
\]
This inclusion and
\eqref{eq:initial-pd-vanishing}  give
\[
 \pi(\gamma_j(Vb))
 =
 0
 =
 \gamma_j^{\mathrm{can}}(\pi(Vb))
 \qquad(j\geq2).
\]
Compatibility for $j=1$ is automatic, so $\pi$ respects
divided powers.

Let $\operatorname{res}_{m+1}:W\OO_X\to W_{m+1}\OO_X$
denote Witt truncation.
By the construction of the crystalline--de Rham--Witt comparison
\cite[Sections~3.2--3.3]{BerthelotQ},
the composite of crystalline base change to $W_{m+1}$
and restriction to the Witt PD thickening
$(|X|,W_{m+1}\OO_X)$ is given by
\[
 C\xrightarrow{\epsilon}N_h
 \xrightarrow{\RG(\operatorname{res}_{m+1})}
 \RG(X,W_{m+1}\OO_X).
\]
By the construction of the crystalline--de Rham comparison
\cite[Section~3.1]{BerthelotQ}, crystalline restriction to
$X_2^0$ agrees with the Hodge quotient
\[
 q_{\mathrm H,2}:
 C_2\xrightarrow{\sim}
 \RG(X_2^0,\Omega^\bullet_{X_2^0/W_2})
 \xrightarrow{\RG(\operatorname{pr}_0)}V_2,
\]
where $\operatorname{pr}_0$ is the projection
$\Omega^\bullet_{X_2^0/W_2}\to\OO_{X_2^0}[0]$.
Thus crystalline base change to $W_2$ followed by restriction
to $X_2^0$ is the composite
\[
 C\xrightarrow{r_2}C_2\xrightarrow{q_{\mathrm H,2}}V_2.
\]
Set
\[
 b_2:=
 \RG(\pi)\circ\RG(\operatorname{res}_{m+1}):
 N_h\longrightarrow V_2.
\]
Since $\pi$ respects divided powers, the associated morphism
\[
 X_2^0\longrightarrow (|X|,W_{m+1}\OO_X)
\]
is a morphism of PD thickenings of $X$ over $W_{m+1}$.
Naturality of crystalline evaluation along this morphism gives
\refstepcounter{equation}
\[
 q_{\mathrm H,2}\circ r_2=b_2\circ\epsilon.
 \tag{\theequation}\label{eq:initial-period-factorization}
\]

We now choose the integral decomposition to match this
Hodge quotient.
By Lemmas~\ref{common:witt} and~\ref{common:graph},
\[
 N_h\simeq W\oplus D_h[-d],
 \qquad
 V_2\simeq W_2\oplus W_2[-d].
\]
Choose the identification of $H^0(V_2)$ using the unit,
and choose a generator of $H^d(V_2)$.
Since $W$ and $D_h$ are free, $b_2$ is determined by
its maps in degrees zero and $d$.
These are the standard reduction $W\to W_2$ and a
functional
\[
 \ell_2:D_h\longrightarrow W_2.
\]
Its reduction modulo $p$ is the quotient
$D_h/pD_h\twoheadrightarrow D_h/VD_h\simeq k$,
so $\ell_2$ is primitive.

Lift $\ell_2$ to a functional $\ell:D_h\to W$, and use
this $\ell$ in the construction of Lemma~\ref{common:split}.
This gives a split quotient
\[
 C\xrightarrow{\epsilon}N_h
 \xrightarrow{(\mathrm{id},\ell)}
 Q=W\oplus W[-d].
\]
Let $P$ be its fibre and choose the resulting decomposition
$C\simeq P\oplus Q$.
By \eqref{eq:initial-period-factorization} and the choice
of $\ell$ lifting $\ell_2$, the reduction of this quotient
modulo $p^2$ agrees with $q_{\mathrm H,2}$ after identifying
$Q_2$ with $V_2$.
Hence $q_{\mathrm H,2}|_{P_2}=0$, and
\eqref{eqn:an} gives $a_2=0$.
\end{proof}

\begin{remark}
For globally $F$-split varieties in characteristic $p>2$,
the factorization \eqref{eq:initial-period-factorization}
follows from \cite[Corollary~6.6.1]{AZ}.
The proof above extends their argument to an arbitrary
chosen quasi-$F$-splitting and then chooses an integral
decomposition $C\simeq P\oplus Q$ for which $a_2=0$.
We have not been able to verify the proof of
\cite[Corollary~6.6.1]{AZ} in characteristic two,
and therefore retain the assumption $p>2$ in
Lemma~\ref{common:canonical}.
\end{remark}

\subsection{The period map and the third Witt vectors}

Let $\operatorname{Art}_W(k)$ be the category of local Artinian
$W$-algebras equipped with a $W$-algebra augmentation $A\to k$
inducing an isomorphism $A/\mathfrak m_A\cong k$.
Morphisms in $\operatorname{Art}_W(k)$
are $W$-algebra maps compatible with the augmentations to $k$.
For $A\in\operatorname{Art}_W(k)$ and $E\in\mathcal D(W)$, put
\[
 I_A:=\ker(A\to k)=\mathfrak m_A,\qquad
 E_A:=E\otimes_W^{\mathbf L}A,\qquad
 E_1:=E\otimes_W^{\mathbf L}k.
\]

Let $\operatorname{Art}^{\mathrm{pd}}_W(k)$ be the category whose
objects are pairs $(A,\gamma)$ with $A\in\operatorname{Art}_W(k)$
and a divided-power structure $\gamma$ on $I_A$ compatible with
the canonical divided powers on $pW$. A morphism
$(A,\gamma)\to(B,\delta)$ is a morphism
$f:A\to B$ in $\operatorname{Art}_W(k)$ such that
\[
 f(\gamma_n(x))=\delta_n(f(x))
 \qquad(x\in I_A,\ n\geq1).
\]

\begin{definition}[The space associated to a complex]
\label{common:omega-infinity}
For $K\in\mathcal D(R)$, define the pointed space
\[
 \Omega^\infty K
 :=\operatorname{Map}_{\mathcal D(R)}(R[0],K),
\]
with base point the zero morphism. There are natural
isomorphisms of abelian groups
\[
 \pi_0(\Omega^\infty K)\cong H^0(K),\qquad
 \pi_i(\Omega^\infty K,0)\cong H^{-i}(K)
 \quad(i\geq1).
\]
For a cochain representative $(K^\bullet,d)$, the Dold--Kan
correspondence applied to the cohomological truncation
$\tau_{\leq0}K^\bullet$, with indices reversed, gives a
simplicial $R$-module model of this space.
Its vertices are elements of $Z^0(K^\bullet)$, and a
$1$-simplex from $x$ to $y$ is specified by
\[
 b\in K^{-1},\qquad d^{-1}b=y-x.
\]
\end{definition}

Put $V_1:=\RG(X,\OO_X)$, and regard the Hodge quotient as the
following morphism in $\mathcal D(k)$:
\[
 q_{\mathrm H}:C_1
 \xrightarrow{\,c_X\,}\RG(X,\Omega^\bullet_{X/k})
 \xrightarrow{\,\RG(\operatorname{pr}_0)\,}V_1.
\]
Here $c_X$ is the crystalline--de Rham comparison equivalence
in $\mathcal D(k)$,
and $\operatorname{pr}_0$ is the projection
$\Omega^\bullet_{X/k}\to\OO_X[0]$.

We use the quotient description of the period domain of
\cite[Definition~6.13 and Remark~6.14]{BT}, restricted to
ordinary divided-power Artinian $W$-algebras.

\begin{definition}[The period domain]
\label{common:period-domain}
For $(A,\gamma)\in\operatorname{Art}^{\mathrm{pd}}_W(k)$, let
\[
 \mathcal U_C(A):=
 \bigl(\operatorname{Perf}(A)_{C_A/}\bigr)^{\simeq}.
\]
The undercategory $\operatorname{Perf}(A)_{C_A/}$ has objects
$q_A:C_A\to V_A$ with $V_A$ perfect. The superscript
$\simeq$ means its maximal $\infty$-groupoid: morphisms are
equivalences between such arrows, with all coherent higher
homotopies retained. Derived reduction defines a map of spaces
\[
 r_A:\mathcal U_C(A)\longrightarrow\mathcal U_C(k),\qquad
 (C_A\xrightarrow{q_A}V_A)\longmapsto
 (C_1\xrightarrow{q_A\otimes_A^{\mathbf L}k}
                  V_A\otimes_A^{\mathbf L}k).
\]
Define
\[
 \Per_X(A,\gamma):=
 \operatorname{hofib}_{q_{\mathrm H}}(r_A)
 =\mathcal U_C(A)\mathop{\times}^{h}_{\mathcal U_C(k)}
   \{q_{\mathrm H}\}\quad\text{in }\mathcal S.
\]
We write $\Per_X(A)$ for $\Per_X(A,\gamma)$ when the
divided-power structure $\gamma$ has been fixed.
Explicitly, a point is a quadruple $(V_A,q_A,\rho_A,h_A)$ with
\[
 \begin{gathered}
 q_A:C_A\longrightarrow V_A,\qquad
 \rho_A:V_A\otimes_A^{\mathbf L}k\xrightarrow{\sim}V_1,\\
 h_A:\rho_A\circ(q_A\otimes_A^{\mathbf L}k)
           \simeq q_{\mathrm H}
 \quad\text{in }\operatorname{Map}_{\mathcal D(k)}(C_1,V_1).
 \end{gathered}
\]
The homotopy $h_A$ is part of the data. An equivalence of points
includes an equivalence of the targets in $\mathcal D(A)$
compatible, up to coherent homotopy, with $q_A$, $\rho_A$,
and $h_A$. Derived extension of
scalars along a morphism $(A,\gamma)\to(B,\delta)$ defines the
functor
\[
 \Per_X:\operatorname{Art}^{\mathrm{pd}}_W(k)
       \longrightarrow\mathcal S.
\]
\end{definition}

Fix an equivalence
$s:P\oplus Q\xrightarrow{\sim}C$ in $\mathcal D(W)$ as in
Lemma~\ref{common:split}, and put
$D:=\mathbf R\Hom_W(P,Q)$. Fix also an equivalence
$\alpha:Q_1\xrightarrow{\sim}V_1$ in $\mathcal D(k)$ and a homotopy
\[
 h_0:\alpha\circ\operatorname{pr}_{Q_1}
       \simeq q_{\mathrm H}\circ s_1
 \quad\text{in }
 \operatorname{Map}_{\mathcal D(k)}(P_1\oplus Q_1,V_1).
\]
The equality in $D(k)$ in Lemma~\ref{common:split} ensures the
existence of $h_0$; the choice of $h_0$ is included in the fixed
data for the coordinates. The quotient
$\operatorname{pr}_{Q_A}\circ s_A^{-1}:C_A\to Q_A$,
with reduction identification $\alpha$ and homotopy induced by
$h_0$, defines a base point $o_A\in\Per_X(A)$.

For a point $(V_A,q_A,\rho_A,h_A)$, define
\[
 \begin{aligned}
 \beta_A&:=q_A\circ s_A|_{Q_A}:Q_A\longrightarrow V_A,\\
 a_A&:=\beta_A^{-1}\circ q_A\circ s_A|_{P_A}:
                P_A\longrightarrow Q_A.
 \end{aligned}
\]
The reduction of $\beta_A$ is an equivalence in $\mathcal D(k)$,
so derived Nakayama makes $\beta_A$ an equivalence in $\mathcal D(A)$.
The same conclusion holds if the target $V_A$ is initially an
arbitrary object of $\mathcal D(A)$: derived reduction to $k$ is
conservative for an Artinian local ring $A$. Hence $V_A\simeq Q_A$
is automatically perfect. Restricting the targets in the quotient
description of \cite[Remark~6.14]{BT} to perfect complexes therefore
does not change the space; compare the proof of \cite[Lemma~7.1]{BT}.
The homotopies $h_A$ and $h_0$ induce a specified homotopy
\[
 \bar h_A:a_A\otimes_A^{\mathbf L}k\simeq0
 \quad\text{in }\operatorname{Map}_{\mathcal D(k)}(P_1,Q_1).
\]
Indeed, restricting $h_A$ and $h_0$ to $P_1$ gives
\[
 \rho_A\circ(\beta_A\otimes_A^{\mathbf L}k)
       \circ(a_A\otimes_A^{\mathbf L}k)
 \simeq q_{\mathrm H}\circ s_1|_{P_1}\simeq0.
\]
The graph chart of \cite[Lemma~7.1]{BT} therefore gives
equivalences of pointed spaces, natural in
$(A,\gamma)\in\operatorname{Art}^{\mathrm{pd}}_W(k)$,
whose composite we denote by $\operatorname{Graph}_A$:
\refstepcounter{equation}
\[
 \begin{aligned}
 (\Per_X(A),o_A)
 &\simeq\operatorname{hofib}_0\!\left(
   \operatorname{Map}_{\mathcal D(A)}(P_A,Q_A)
   \longrightarrow
   \operatorname{Map}_{\mathcal D(k)}(P_1,Q_1)\right)\\
 &\simeq\Omega^\infty\operatorname{fib}
          (D_A\longrightarrow D_1)\\
 &\simeq\Omega^\infty(D\otimes_W^{\mathbf L}I_A).
 \end{aligned}
 \tag{\theequation}\label{eq:common-graph-chart}
\]
The quotient-to-graph equivalence sends $(V_A,q_A,\rho_A,h_A)$
to $(a_A,\bar h_A)$. Perfectness of $P$ gives
\[
 \mathbf R\Hom_A(P_A,Q_A)\simeq D_A
       \quad\text{in }\mathcal D(A).
\]
Tensoring $0\to I_A\to A\to k\to0$ with $D$ gives
\[
 \operatorname{fib}(D_A\to D_1)
       \simeq D\otimes_W^{\mathbf L}I_A
       \quad\text{in }\mathcal D(W).
\]
The map $\operatorname{Graph}_A$ depends on the fixed data
$(s,\alpha,h_0)$.

For $A\in\operatorname{Art}_W(k)$, let $\Def_X(A)$ be the
nerve of the groupoid of pairs $(X_A,\iota_A)$, where $X_A$ is
smooth and proper over $A$ and
$\iota_A:X_A\times_A k\xrightarrow{\sim}X$ is a $k$-isomorphism;
morphisms are $A$-isomorphisms preserving $\iota_A$.
To define the period map, fix
$(A,\gamma)\in\operatorname{Art}^{\mathrm{pd}}_W(k)$.
For $(X_A,\iota_A)\in\Def_X(A)$, put
\[
 \begin{gathered}
 V_{X_A}:=\RG(X_A,\OO_{X_A}),\\
 c_{X_A}^{\gamma}:C_A\xrightarrow{\sim}
          \RG(X_A,\Omega^\bullet_{X_A/A})
          \quad\text{in }\mathcal D(A),\\
 q_{X_A}^{\gamma}:=
 \RG(\operatorname{pr}_{0,X_A})\circ c_{X_A}^{\gamma}:
 C_A\longrightarrow V_{X_A}.
 \end{gathered}
\]
The map $c_{X_A}^{\gamma}$ is crystalline comparison for the
divided-power thickening determined by $(A,I_A,\gamma)$, and
$\operatorname{pr}_{0,X_A}$ is the projection
$\Omega^\bullet_{X_A/A}\to\OO_{X_A}[0]$.
Derived base change and $\iota_A$ provide an equivalence
$\rho_{X_A}:V_{X_A}\otimes_A^{\mathbf L}k\xrightarrow{\sim}V_1$
in $\mathcal D(k)$ and a compatible homotopy
\[
 h_{X_A}:\rho_{X_A}\circ
       (q_{X_A}^{\gamma}\otimes_A^{\mathbf L}k)
       \simeq q_{\mathrm H}.
\]
The period map of \cite[Definition~6.13]{BT} is the map of spaces
\[
 \begin{aligned}
 \operatorname{per}_{X,A}^{\gamma}:\Def_X(A)&\longrightarrow\Per_X(A),\\
 (X_A,\iota_A)&\longmapsto
       (V_{X_A},q_{X_A}^{\gamma},\rho_{X_A},h_{X_A}).
 \end{aligned}
\]

We use the tangent and obstruction identifications, including
their sign conventions, of \cite[Theorem~6.18]{BT}.
The splitting data $(s,\alpha,h_0)$ induce an equivalence
\[
 u:P_1\xrightarrow{\sim}\RG(X,\Fil^1\Omega^\bullet_{X/k})
       \quad\text{in }\mathcal D(k).
\]
Indeed, $(s_1,\alpha,h_0)$ gives an equivalence between the
arrows $\operatorname{pr}_{Q_1}:P_1\oplus Q_1\to Q_1$ and
$q_{\mathrm H}:C_1\to V_1$.
Functoriality of fibres then gives $u$, using
$\operatorname{fib}(\operatorname{pr}_{Q_1})\simeq P_1$ and
$\operatorname{fib}(q_{\mathrm H})
 \simeq\RG(X,\Fil^1\Omega^\bullet_{X/k})$.
Let $\operatorname{pr}_1:\Fil^1\Omega^\bullet_{X/k}
\to\Omega^1_{X/k}[-1]$ be the first Hodge projection.
For
\[
 \xi\in H^2(X,T_X)
   \cong\Ext^2_{\OO_X}(\Omega^1_{X/k},\OO_X),
\]
view $\xi$ as a morphism $\Omega^1_{X/k}\to\OO_X[2]$ in
$D(\OO_X)$. By \cite[Theorem~6.18]{BT}, the map
\[
 \theta_1:H^2(X,T_X)\longrightarrow H^1(D_1)
       \cong\Hom_{D(k)}(P_1,Q_1[1])
\]
is given by the composite in $D(k)$
\refstepcounter{equation}
\[
 \theta_1(\xi)=
 (\alpha[1])^{-1}\circ\RG(\xi[-1])\circ
       \RG(\operatorname{pr}_1)\circ u.
 \tag{\theequation}\label{eq:theta1-contraction}
\]
Petrov's degeneration theorem (cf.\ (\ref{eqn:Petrov})) makes
\[
 e_X:=H^{d-1}\!\left(\RG(\operatorname{pr}_1)\circ u\right):
 H^{d-1}(P_1)\longrightarrow H^{d-2}(X,\Omega^1_{X/k})
\]
surjective. Serre duality and $\omega_X\simeq\OO_X$ make the
contraction pairing
\[
 \begin{aligned}
 \kappa:H^2(X,T_X)&\longrightarrow
 \Hom_k\!\left(H^{d-2}(X,\Omega^1_{X/k}),H^d(X,\OO_X)\right),\\
 \kappa(\xi)(v)&=\operatorname{ev}(\xi\smile v)
 \end{aligned}
\]
isomorphism, where $\operatorname{ev}:T_X\otimes_{\OO_X}
\Omega^1_{X/k}\to\OO_X$ is evaluation.
By \eqref{eq:theta1-contraction}, we have
\[
 H^{d-1}\!\left(\alpha[1]\circ\theta_1(\xi)\right)
       =\kappa(\xi)\circ e_X.
\]
Surjectivity of $e_X$ and injectivity of $\kappa$ imply
injectivity of $\theta_1$, as in \cite[Proposition~7.6]{BT}.

Let $A'\twoheadrightarrow A$ be a divided-power elementary
extension over $W$ in the sense of \cite[Definition~7.7]{BT}.
Explicitly, $(A',\gamma')\to(A,\gamma)$ is a morphism in
$\operatorname{Art}^{\mathrm{pd}}_W(k)$ whose underlying ring map
is surjective and whose kernel $J$ satisfies
\[
 I_{A'}J=0,\qquad
 \gamma_r'(x)=0\quad(x\in J,\ r\geq2).
\]
Here $\gamma'$ denotes the divided powers on $I_{A'}$.
In particular, $J$ is a finite-dimensional $k$-vector space.
The exact sequence $0\to J\to I_{A'}\to I_A\to0$ induces
the connecting homomorphism
\[
 \delta_{A'/A}:H^0(D\otimes_W^{\mathbf L}I_A)
 \longrightarrow H^1(D\otimes_W^{\mathbf L}J)
 \cong H^1(D_1)\otimes_k J.
\]
For a deformation $X_A$, set
\[
 \eta_A:=\pi_0(\operatorname{Graph}_A)
       \bigl([\operatorname{per}_{X,A}^{\gamma}(X_A)]\bigr)
       \in H^0(D\otimes_W^{\mathbf L}I_A).
\]
The obstruction comparison of
\cite[Theorem~6.18 and Proposition~7.15]{BT} reads
\begin{equation}
\label{eqn:obstruction_period}
 (\theta_1\otimes_k\mathrm{id}_J)
       \bigl(\operatorname{ob}_{X_A}(A'/A)\bigr)
       =\delta_{A'/A}(\eta_A),
 \qquad
 \operatorname{ob}_{X_A}(A'/A)\in H^2(X,T_X)\otimes_k J.
\end{equation}
Thus, by the injectivity of $\theta_1$, the lifting $X_A$
extends to $A'$ if and only if $\delta_{A'/A}(\eta_A)=0$.
Equivalently, $X_A$ extends to $A'$ if and only if its period $\eta_A$
lifts to $D\otimes_W^{\mathbf L}I_{A'}$.

For $A=W_n$, use the canonical divided powers on
$I_A=pW_n$ and write $\eta_n:=\eta_{W_n}$ for the period of
$X_n$. If $j_n:pW_n\hookrightarrow W_n$ denotes inclusion,
then
\refstepcounter{equation}
\[
 \begin{gathered}
 \eta_n\in H^0(D\otimes_W^{\mathbf L}pW_n),\qquad
 a_n\in H^0(D_n)\cong\Hom_{D(W_n)}(P_n,Q_n),\\
 H^0(\mathrm{id}_D\otimes_W^{\mathbf L}j_n)(\eta_n)=a_n.
 \end{gathered}
 \tag{\theequation}\label{eq:common-period-coordinate}
\]
The element $a_n$ is the quotient coordinate in
Lemma~\ref{common:graph}; passing from $\eta_n$ to $a_n$ forgets
the specified homotopy of the reduced graph map to zero.
For the initial lifting $X_2^0$ and the decomposition of
Lemma~\ref{common:canonical}, the initial condition is precisely
\[
 H^0(\mathrm{id}_D\otimes_W^{\mathbf L}j_2)(\eta_2)
       =a_2=0\quad\text{in }H^0(D_2).
\]
The proof of Proposition~\ref{common:w3} uses this equality;
no vanishing of $\eta_2$ in
$H^0(D\otimes_W^{\mathbf L}pW_2)$ is assumed.

\begin{proposition}\label{common:w3}
The lifting $X_2^0$ of Lemma~\ref{common:canonical} extends to a
smooth projective lifting over $W_3$.
\end{proposition}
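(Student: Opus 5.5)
We use the obstruction comparison \eqref{eqn:obstruction_period} for the divided-power elementary extension $W_3\twoheadrightarrow W_2$. This is a PD elementary extension: its kernel is $J=p^2W_3\cong k$, we have $I_{W_3}J=pW_3\cdot p^2W_3=0$, and for $p>2$ the canonical divided powers satisfy $\gamma_r(p^2u)=p^{2r}u^r/r!=0$ in $W_3$ for $r\geq2$. Since $\theta_1$ is injective, $X_2^0$ extends to $W_3$ as soon as $\delta_{W_3/W_2}(\eta_2)=0$ in $H^1(D\otimes^{\mathbf L}_W p^2W_3)\cong H^1(D_1)$. The task is therefore to compute this connecting map and show it kills $\eta_2$ using only $a_2=0$.

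\textbf{Step 1: compute the connecting map.} Write $I_2=pW_2\cong k$ and $I_3=pW_3\cong W_2$ as $W$-modules. The sequence $0\to p^2W_3\to pW_3\to pW_2\to0$ is isomorphic to $0\to k\xrightarrow{p}W_2\to k\to0$. Hence
\[
\delta_{W_3/W_2}:H^0(D\otimes^{\mathbf L}_W k)\to H^1(D\otimes^{\mathbf L}_W k)
\]
is the Bockstein $\beta$ for $W_2$ over $k$, under $pW_2\cong k$ and $p^2W_3\cong k$. In the same way, $j_2:pW_2\hookrightarrow W_2$ is the map $k\xrightarrow{p}W_2$. So $a_2=H^0(\mathrm{id}_D\otimes j_2)(\eta_2)$ is the image of $\eta_2\in H^0(D_1)$ under $H^0(D_1)\to H^0(D_2)$, induced by multiplication by $p$.

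\textbf{Step 2: use $a_2=0$.} The triangle $D_1\xrightarrow{p}D_2\to D_1$, coming from $0\to k\xrightarrow{p}W_2\to k\to0$, gives an exact sequence
\[
H^{-1}(D_1)\xrightarrow{\beta}H^0(D_1)\xrightarrow{p}H^0(D_2).
\]
Since $a_2=0$, we can write $\eta_2=\beta(\zeta)$ for some $\zeta\in H^{-1}(D_1)$. The two Bocksteins come from the same extension $0\to k\to W_2\to k\to 0$, so $\delta_{W_3/W_2}(\eta_2)=\beta(\beta(\zeta))=0$, because $\beta\circ\beta=0$. Justifying that both maps are literally the same Bockstein is the point needing care. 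I would check it by writing $D$ as a bounded complex of free $W$-modules, which is possible since $P$ and $Q$ are perfect. In that model $\beta$ is ``lift, apply $d$, divide by $p$'', and $\beta^2=0$ holds at the cochain level: if $dx=py$ with $x$ a lift of $\zeta$, then $y$ lifts $\beta(\zeta)$ and $dy=0$ integrally. The argument uses only the equality $a_2=0$ in $H^0(D_2)$, as the text before the proposition emphasises, and not the vanishing of $\eta_2$ itself.

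\textbf{Step 3: extend and algebraise.} By \eqref{eqn:obstruction_period} and the injectivity of $\theta_1$, we get $\operatorname{ob}_{X_2^0}(W_3/W_2)=0$. So there is a smooth proper $X_3/W_3$ restricting to $X_2^0$. Projectivity follows because $H^2(X,\OO_X)=0$ when $d\geq3$: an ample line bundle on $X_2^0$ then lifts to $X_3$, and a lift of an ample bundle through a nilpotent thickening remains ample.

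\textbf{Expected obstacle.} The main difficulty is Step 1, namely matching $\delta_{A'/A}$ and $j_2$ with the standard Bockstein triangle. This requires tracking the identifications $pW_n\cong W_{n-1}$ and the sign conventions of \cite[Theorem~6.18]{BT}. Signs do not affect the vanishing, but the identification of the two extensions must be made precise.
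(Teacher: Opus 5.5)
Your proof is correct. Its outer structure matches the paper's exactly: the obstruction comparison \eqref{eqn:obstruction_period} for the PD elementary extension $W_3\twoheadrightarrow W_2$, injectivity of $\theta_1$, and extension of an ample line bundle using $H^2(X,\OO_X)=0$.

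The difference lies only in how you deduce $\delta_{W_3/W_2}(\eta_2)=0$ from $a_2=0$.

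The paper maps the extension $0\to k\to pW_3\to pW_2\to0$ into $0\to k\xrightarrow{p^2}W_3\to W_2\to0$ by the inclusions, with the identity on $k$. Naturality of connecting homomorphisms then gives in one step $\delta_{W_3/W_2}(\eta_2)=\partial(a_2)=0$, where $\partial\colon H^0(D_2)\to H^1(D_1)$ is the connecting map for $W_3\to W_2$.

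You instead proceed in three steps:
\begin{enumerate}
\item identify $\delta_{W_3/W_2}$ with the mod-$p$ Bockstein $\beta$ and $j_2$ with $p\colon k\to W_2$;
\item use exactness to write $\eta_2=\beta(\zeta)$;
\item conclude from $\beta\circ\beta=0$.
\end{enumerate}
This is valid. The identification you flag as delicate is in fact routine. The same isomorphism $pW_2\cong k$, $px\mapsto\bar x$, appears both in $j_2=p\circ(\text{this iso})$ and as the quotient term of $0\to p^2W_3\to pW_3\to pW_2\to0\cong 0\to k\xrightarrow{p}W_2\to k\to0$.

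Comparing the two routes: the paper's factorization is shorter and needs neither the identification $pW_3\cong W_2$ nor the identity $\beta^2=0$. Yours makes explicit that $\eta_2$ itself need not vanish, but lies in the image of the Bockstein.
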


\begin{proof}
Choose the decomposition with $a_2=0$, and set $I_n=pW_n$.
There is a commutative diagram with exact rows
\[
\begin{tikzcd}[column sep=small]
0\arrow[r]&k\arrow[r,"p^2"]\arrow[d,equal]&I_3\arrow[r]\arrow[d]&I_2\arrow[r]\arrow[d]&0\\
0\arrow[r]&k\arrow[r,"p^2"]&W_3\arrow[r]&W_2\arrow[r]&0.
\end{tikzcd}
\]
After tensoring with $D$, \eqref{eq:common-period-coordinate}
and naturality of the connecting homomorphisms show that
the connecting images of $\eta_2$ and $a_2$ agree in
$H^1(D_1)$. Since $a_2=0$, we have
$\delta_{W_3/W_2}(\eta_2)=0$.
Since $W_3\to W_2$ is a divided-power elementary extension
\cite[Corollary~7.14]{BT}, \eqref{eqn:obstruction_period}
and the injectivity of $\theta_1$ show that $X_2^0$ extends
over $W_3$.
An ample line bundle extends because $H^2(X,\OO_X)=0$, so the
lifting is projective.
\end{proof}

This proves Theorem~\ref{main:W3} for $d\geq3$.
For $d=1,2$, the assertion follows from $W(k)$-liftability
(see \cite[Proposition~2.6]{Ito18} for $d=2$).

\subsection{Algebraization and the torsion-free case}
\begin{lemma}\label{common:algebraization}
A compatible sequence of smooth proper liftings of $X$ over all
$W_n$ is induced by a smooth projective lifting over $W$.
\end{lemma}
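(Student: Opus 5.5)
The plan is the standard formal-to-algebraic argument: Grothendieck's existence theorem, with the needed ample line bundle supplied by $H^2(X,\OO_X)=0$. Let $(X_n)_{n\geq1}$ be the given compatible sequence, with isomorphisms $X_{n+1}\otimes_{W_{n+1}}W_n\simeq X_n$ compatible with the identifications $X_1\simeq X$. These data define a formal scheme $\mathfrak X=\varinjlim X_n$ over $\operatorname{Spf}W$. Since each $X_n$ is flat over $W_n$ with smooth special fibre, $\mathfrak X$ is a proper formal scheme, flat over $\operatorname{Spf}W$, whose reductions are smooth.

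\emph{Step 1: a compatible system of line bundles.} Fix an ample line bundle $L_1$ on $X$. Given $L_n$ on $X_n$, the obstruction to extending it to $X_{n+1}$ lies in $H^2(X,\OO_X)\otimes_k p^nW_{n+1}$. This uses the exact sequence
\[
 0\to p^n\OO_{X_{n+1}}\simeq\OO_X\to\OO_{X_{n+1}}^\times\to\OO_{X_n}^\times\to1,
\]
where the first map is $f\mapsto1+f$, together with flatness to identify $p^n\OO_{X_{n+1}}\simeq\OO_X$. Since $d\geq3$, the strict Calabi--Yau condition gives $H^2(X,\OO_X)=0$, so $L_n$ extends to some $L_{n+1}$. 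Choosing extensions inductively yields a compatible system $(L_n)$, that is, a line bundle $\mathfrak L$ on $\mathfrak X$. For $d\leq2$ we have $H^2(X,\OO_X)\neq0$ when $d=2$, but there the statement is not needed: K3 surfaces and genus one curves lift projectively over $W$ directly. Alternatively, one can restrict to the case $d\geq3$ that is standing in this section.

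\emph{Step 2: ampleness and algebraization.} Each $L_n$ is ample on $X_n$, because ampleness can be checked on the reduced special fibre, which is $X$. Grothendieck's existence theorem (EGA III, 5.4.5) then says that a proper formal scheme over $\operatorname{Spf}W$ carrying a line bundle whose reduction is ample is the formal completion of a projective $W$-scheme $\mathcal X$, and that $\mathfrak L$ algebraizes to an ample line bundle $\mathcal L$. Concretely, one takes $\mathcal X=\operatorname{Proj}\bigoplus_m \varprojlim_n H^0(X_n,L_n^{m})$. Because $\mathcal X\otimes_W W_n\simeq X_n$ compatibly, we recover the given tower.

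\emph{Step 3: flatness and smoothness.} The scheme $\mathcal X$ is flat over $W$ because its completion is flat over $\operatorname{Spf}W$, and flatness descends along $p$-adic completion for finite type schemes over the noetherian ring $W$. The morphism $\mathcal X\to\Spec W$ is smooth along the closed fibre by the fibrewise criterion, since it is flat with smooth closed fibre $X$. The smooth locus is open and contains the closed fibre, and $\mathcal X$ is proper over the local ring $W$, so every closed point lies in the closed fibre. Hence the non-smooth locus, being closed and avoiding the closed fibre, is empty, and $\mathcal X$ is smooth over $W$.

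The only genuine input is Step 1; everything else is standard. I expect no real obstacle beyond checking carefully that the identifications $\iota$ are carried along, so that $\mathcal X$ is a lifting of $X$ in the sense fixed in the notation.
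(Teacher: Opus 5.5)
Your proposal is correct and follows the paper's proof: you extend an ample line bundle along the tower using $H^2(X,\OO_X)=0$, algebraize by Grothendieck's existence theorem, and deduce smoothness over $W$ from smoothness along the special fibre together with properness. Your Step~3 usefully makes explicit the flatness that the paper leaves implicit; strictly, it comes from the local flatness criterion at points of the closed fibre, after which the same properness argument you give for smoothness covers all of $\mathcal X$.
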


\begin{proof}
Since $H^2(X,\OO_X)=0$, an ample line bundle extends along the
sequence. The algebraization theorem \cite[Tag~089A]{Stacks}
gives a projective $W$-scheme inducing the formal lifting.
It is smooth along the special fibre; properness then implies
smoothness over $W$.
\end{proof}

The next result is an application of
\cite[Theorem~7.16]{BT}. The decomposition of Lemma~\ref{common:split}
allows us to check its torsion-freeness condition using only
$H^d_{\cris}(X/W)$.

\begin{proposition}\label{common:torsion-free}
If $H^d_{\cris}(X/W)$ is torsion-free, then the deformations of $X$
over local Artinian $W$-algebras are unobstructed.
In particular, every smooth $W_2$-lifting of $X$ extends to a smooth
projective lifting over $W$.
\end{proposition}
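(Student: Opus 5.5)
The plan is to deduce the statement from \cite[Theorem~7.16]{BT}, applied to the decomposition $C\simeq P\oplus Q$ of Lemma~\ref{common:split}. That criterion says deformations over local Artinian $W$-algebras are unobstructed once two conditions hold: the obstruction comparison $\theta_1$ is injective, and the period complex $D=\mathbf R\Hom_W(P,Q)$ has torsion-free $H^1$. The first condition has already been established above, from Petrov's degeneration \eqref{eqn:Petrov} together with Serre duality. So the work is in the second condition and in the formal deduction.

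\emph{Computing $H^1(D)$.} Since $Q=W\oplus W[-d]$, we have
\[
 D\simeq\mathbf R\Hom_W(P,W)\oplus\mathbf R\Hom_W(P,W)[-d].
\]
For the perfect complex $P$, the universal coefficient sequence gives
\[
 0\to\Ext^1_W(H^{i}(P),W)\to H^{i+1}(P^\vee)\to\Hom_W(H^{-i-1}(P),W)\to0,
\]
with $\Ext^1_W(M,W)\cong M_{\tors}^\vee$ for finitely generated $M$. Hence
\[
 H^1(D)\cong H^1(P^\vee)\oplus H^{1-d}(P^\vee),
\]
and its torsion is $H^0(P)_{\tors}^\vee\oplus H^{-d}(P)_{\tors}^\vee$ (up to index shifts). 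Writing $C=P\oplus Q$, we get $H^i(P)\oplus H^i(Q)=H^i_{\cris}(X/W)$. The relevant groups are therefore $H^0(P)\subseteq H^0_{\cris}$, which is torsion-free, and $H^{-d}(P)=0$. This makes $H^1(D)$ torsion-free outright, which is suspicious. I therefore expect the actual torsion condition of \cite[Theorem~7.16]{BT} to concern $H^1$ of the mapping complex in a shifted form, namely $\pi_0$ of $\Omega^\infty(D\otimes I)$ lifting along $W$, which is governed by $H^1(D)$ together with the torsion of $H^{d+1}$ and $H^{d}$ pieces.

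Concretely, the main step is to carefully identify which cohomology of $D$ enters the criterion. The $\Hom(H^{j}(P),W)$ terms are free, and the $\Ext^1$ terms are duals of torsion in $H^j(P)$ for $j\in\{0,d,d+1\}$ or so. Each such group is a direct summand of $H^j_{\cris}(X/W)$, since $Q$ is free in degrees $0$ and $d$. Torsion in $H^{d+1}_{\cris}$ is Poincaré dual, up to a shift, to torsion in $H^{d}_{\cris}$. Thus everything reduces to the single hypothesis that $H^d_{\cris}(X/W)$ is torsion-free. This is the point of the remark that the decomposition lets one check the condition using only $H^d_{\cris}$. I expect this bookkeeping of degrees and of duality to be the main obstacle. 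I would first try to match the exact statement of \cite[Theorem~7.16]{BT}: the torsion-freeness of $\pi_0$ of the relevant mapping group is equivalent to the vanishing of $\Ext^1(H^{d}(P),W)$.

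\emph{Concluding.} Granted the hypotheses, \cite[Theorem~7.16]{BT} gives unobstructedness over $\operatorname{Art}_W(k)$. In particular, any $W_2$-lifting extends successively over every $W_n$: the obstruction vanishes via \eqref{eqn:obstruction_period} because $\delta$ kills periods, as $H^1(D)$ is torsion-free and $D\otimes pW_n\to D\otimes pW_{n+1}$-lifts exist. Lemma~\ref{common:algebraization} then algebraizes the resulting compatible system to a smooth projective $W$-lifting.
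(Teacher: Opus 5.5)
Your route is the same as the paper's: feed the decomposition of Lemma~\ref{common:split} into \cite[Theorem~7.16]{BT}, use the edge and Serre-duality conditions already checked in Section~2, then extend step by step and algebraize via Lemma~\ref{common:algebraization}. However, the one step that uses the hypothesis on $H^d_{\cris}(X/W)$ is not carried out, and the computation you give for it is wrong.

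Your universal coefficient sequence has the wrong index on the $\Ext$ term. For a perfect $W$-complex $P$ the correct sequence is
\[
 0\to\Ext^1_W(H^{1-n}(P),W)\to H^{n}\bigl(\mathbf R\Hom_W(P,W)\bigr)\to\Hom_W(H^{-n}(P),W)\to0.
\]
From $D\simeq P^\vee\oplus P^\vee[-d]$ one therefore gets
\[
 H^1(D)_{\tors}\cong\Ext^1_W(H^0(P),W)\oplus\Ext^1_W(H^d(P),W)\cong H^d(P)_{\tors}^\vee .
\]
Here $H^0(P)=0$ because the projection $H^0(C)=W\to H^0(Q)=W$ is an isomorphism. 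So $H^1(D)$ is not torsion-free outright. Your ``suspicious'' conclusion comes from the sign error ($H^{-d}(P)$ should be $H^d(P)$), and your first formulation of the criterion was fine. The paragraph that replaces the computation (torsion in $H^{d+1}$, Poincar\'e duality, degrees ``$\{0,d,d+1\}$ or so'') is a guess rather than an argument. Your final claim, that the condition amounts to $\Ext^1_W(H^d(P),W)=0$, is correct but is asserted without support.

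The missing verification is short. Since $H^d_{\cris}(X/W)=H^d(C)\cong H^d(P)\oplus W$, the module $H^d(P)$ is torsion-free, and together with $H^0(P)=0$ this is exactly the torsion condition. The paper checks it in the form of the mapping group
\[
 H^0\mathbf R\Hom_W(Q,P)\cong H^0(P)\oplus H^d(P)\cong H^d(P).
\]
For perfect $P,Q$ one has $\mathbf R\Hom_W(Q,P)\simeq\mathbf R\Hom_W(D,W)$, so torsion in that group is dual to torsion in $H^1(D)$; this is the same as your original condition. No Poincar\'e duality and no information about $H^{d+1}_{\cris}$ is needed. With this inserted, your concluding paragraph goes through.
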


\begin{proof}
Take the decomposition $C\simeq P\oplus Q$ of
Lemma~\ref{common:split}, with $Q=W\oplus W[-d]$.
Since the projection induces an isomorphism
$H^0(C)\to H^0(Q)$, we have $H^0(P)=0$. Hence
\[
 H^0\mathbf R\Hom_W(Q,P)
 \simeq H^0(P)\oplus H^d(P)
 \simeq H^d(P).
\]
This module is torsion-free, being a direct summand of
$H^d(C)=H^d_{\cris}(X/W)$.

Conditions~(3) and~(4) of \cite[Theorem~7.16]{BT}
were verified in Subsection~2.6 for the maps $e_X$ and $\kappa$.
Thus \cite[Theorem~7.16]{BT} implies that $\Def_X$ is unobstructed.

Starting from any smooth $W_2$-lifting of $X$, we can therefore
choose successive extensions over $W_n$ forming a compatible
system. Lemma~\ref{common:algebraization} realizes this system
as a smooth projective lifting over $W$.
\end{proof}

\section{A lifting criterion}\label{sec:tower}

In this section, let $X$ be a strict Calabi--Yau variety of dimension $d\geq3$.
Suppose that its crystalline complex admits a decomposition
\[
 C\simeq P\oplus Q,\qquad Q\simeq W\oplus W[-d],
\]
for which the composite
$Q\otimes_W^{\mathbf L}k\to C\otimes_W^{\mathbf L}k
\to\RG(X,\OO_X)$ is an equivalence in $\mathcal D(k)$.
This decomposition is fixed throughout this section.
For quasi-$F$-split $X$, its existence follows from
Lemma~\ref{common:split}.
Thus the reduction $P_1=P\otimes_W^{\mathbf L}k$ of $P$ is the
first Hodge filtration $\RG(X,\Fil^1\Omega^\bullet_{X/k})$.
Fix an equivalence $s:P\oplus Q\xrightarrow{\sim}C$
in $\mathcal D(W)$ realizing this decomposition.
Retain the notation $C_A,P_A,Q_A,D_A$ of that section; in particular,
$D:=\mathbf R\Hom_W(P,Q)$ and $D_r:=D\otimes_W^{\mathbf L}W_r$.
Fix a smooth projective lifting $X_2^0/W_2$, and let
\[
 q_{\mathrm H,2}:C_2\longrightarrow
 V_2:=\RG(X_2^0,\OO_{X_2^0})
\]
be its Hodge quotient. The restriction
$\beta_2:=q_{\mathrm H,2}|_{Q_2}:Q_2\to V_2$
is an equivalence in $\mathcal D(W_2)$ by derived Nakayama.
We assume that
\[
 a_2:=\beta_2^{-1}\circ q_{\mathrm H,2}|_{P_2}=0
 \quad\text{in }\Hom_{D(W_2)}(P_2,Q_2)\cong H^0(D_2).
\]
Reducing this equality gives $a_1=0$. We may therefore choose
the reduction data $(\alpha,h_0)$ of Section~2 for $s$;
fix these data for all the graph coordinates below.
Lemma~\ref{common:canonical} constructs such a decomposition
and lifting under its hypotheses; the following criterion
applies to any pair satisfying the assumptions above.

The following is a variant of the lifting argument in
\cite[Theorem~7.16]{BT}. In place of its torsion-freeness condition,
we use the initial zero graph and edge surjectivity over finite
Witt rings. The conclusion concerns extension of the chosen
lifting, rather than unobstructedness of all deformations.

\begin{proposition}\label{tower:extension}
Under these assumptions, suppose that, for every $r\geq1$ and every
smooth proper lifting
$Y/W_{r+1}$ of $X$, the first Hodge edge map
\[
 H^{d-1}\bigl(Y_r,\Fil^1\Omega^\bullet_{Y_r/W_r}\bigr)
 \longrightarrow H^{d-2}(Y_r,\Omega^1_{Y_r/W_r}),
 \qquad Y_r=Y\otimes_{W_{r+1}}W_r,
\]
is surjective. Then $X_2^0$ extends to a smooth projective $W$-lifting.
\end{proposition}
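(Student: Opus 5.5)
The plan is to build inductively a sequence of liftings $X_n/W_n$, each obtained from an extension of a corrected version of the previous one, and to show that the reductions stabilize at every fixed level. The limiting compatible system is then algebraized by Lemma~\ref{common:algebraization}. The introduction already indicates the key steps: relative injectivity (Lemma~\ref{tower:injectivity}), divided-power curves (Lemma~\ref{tower:pd-curves}), and correction (Lemma~\ref{tower:correction}). I would prove these in that order and then run the induction.

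\emph{Step 1 (relative injectivity).} Fix a smooth proper lifting $Y/W_{r+1}$ with reduction $Y_r$. I would redo the computation around \eqref{eq:theta1-contraction} over the base $W_r$. The obstruction to extending $Y_r$ along $W_{r+1}\to W_r$, and more generally along the small divided-power elementary extensions lying over it, lives in $H^2(X,T_X)\otimes J$. Its image under the relative map $\theta$ is computed by the composite of the first Hodge projection with $u$, followed by contraction. Assume the edge map $e_r$ is surjective. By relative Serre duality for $Y_r/W_r$, using $\omega\simeq\OO$ from Lemma~\ref{common:graph}, contraction against $H^{d-2}(Y_r,\Omega^1_{Y_r/W_r})$ detects classes. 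Hence $\theta$ is injective on the relevant obstruction group. Consequently, a lifting over $W_r$ that is the reduction of some $W_{r+1}$-lifting extends along a divided-power elementary extension if and only if its period, viewed in $D\otimes I$, lifts; this is the relative form of \eqref{eqn:obstruction_period}. The hypothesis is stated for every $Y$ precisely so that this holds at every stage.

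\emph{Step 2 (correcting the obstruction).} Suppose $X_n/W_n$ has been constructed, with $X_n$ extending $X_{n-1}$. Its obstruction $\delta(\eta_n)\in H^1(D_1)$, for the extension $W_{n+1}\to W_n$, need not vanish. Using the graph chart \eqref{eq:common-graph-chart}, I would show that $\delta(\eta_n)$ lies in the image of the $p$-power torsion of $H^1(D)$. The argument is that the period of an honest lifting is a lift of $\eta_{n-1}$, and the Bockstein of such a class is $p^{n-1}$-divisible. Since $p^EH^1(D)_{\tors}=0$, the class $\delta(\eta_n)$ can then be killed by changing $\eta_n$ by a tangent element supported in $p^{\,n-E}$ and above. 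To realize this change geometrically, I would integrate tangent vectors from $H^1(X,T_X)$ along finite divided-power curves $W_N\langle t\rangle$ over $X_{n}$, using Step 1 inductively over the $t$-adic elementary extensions. I would then evaluate at $t=p^{n-E}$ or a similar point. This produces a new lifting $X_n'$ that agrees with $X_n$ modulo $p^{\max\{2,n-E\}}$ and whose period has vanishing obstruction. By Step 1, $X_n'$ then extends to $W_{n+1}$.

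\emph{Step 3 (stabilization and limit).} Start from $X_2^0$, for which $a_2=0$ gives the $W_3$-extension as in Proposition~\ref{common:w3}. Iterating Step 2 produces a sequence in which the modifications at step $n$ preserve the level $W_{n-E}$. So for each fixed $m$, the reduction to $W_m$ is constant once $n\ge m+E$, and reduction to $W_2$ is always $X_2^0$. The stabilized reductions form a compatible system, and Lemma~\ref{common:algebraization} gives the projective $W$-lifting.

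I expect the main obstacle to be Step 2. There are two difficulties: showing that the obstruction lies in the torsion part with the uniform bound $E$, and controlling the error so that the geometric correction, via evaluated PD curves, reaches exactly the needed period class while preserving the low levels. The first thing I would try is to work entirely in the graph coordinates $D\otimes_W^{\mathbf L}I$. There one can track the long exact sequence for $p^{n}$-multiplication and use the exactness of $H^0(D)\to H^0(D_n)\to H^1(D)[p^n]$.
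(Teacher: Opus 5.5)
Your overall architecture matches the paper's: start from the $W_3$-extension given by $a_2=0$, bound the Bockstein by a uniform exponent $E$, correct while preserving $W_{\max\{2,n-E\}}$, stabilize, and apply Lemma~\ref{common:algebraization}. But Step~2 has a genuine gap. You assert that $\delta(\eta_n)$ can be killed and that PD curves realize the change, without a mechanism that works.

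\textbf{The missing step: from the torsion bound to the geometric obstruction.} The torsion fact should read $p^{n-2}\partial_n(a_n)=\partial_2(a_2)=0$; this is torsion, not $p^{n-1}$-divisibility. Hence $p^r\partial_n(a_n)=0$ for $r=\min\{n-2,E\}$. The missing idea is to turn this into a statement about the geometric obstruction class.
\begin{itemize}
\item By \eqref{eq:relative-theta-coefficients}, $\theta_{r+1}(p^r o_n(X_n))=\operatorname{red}_{r+1}(p^r\partial_n(a_n))=0$.
\item $\theta_{r+1}$ is injective by Lemma~\ref{tower:injectivity}, because $X_{r+1}$ extends to $X_n|_{W_{r+2}}$. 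This is where the edge hypothesis is used at an intermediate level, and it is why $r\le n-2$ is needed.
\item Hence $p^r o_n(X_n)=0$ in $H^2(X_{r+1},T_{r+1})$. So $o_n(X_n)$ lies in the image of the Bockstein $\delta_r:H^1(X_r,T_r)\to H^2(X,T_X)$ of $0\to T_X\xrightarrow{p^r}T_{r+1}\to T_r\to0$.
\end{itemize}
Working purely in graph coordinates cannot replace this. Nothing guarantees that a prescribed change of $\eta_n$ is the period of an actual lifting. What you need is a formula for how the geometric obstruction $o_n$ changes under a geometric modification.

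\textbf{Why the curves you propose do not work.}
\begin{itemize}
\item Extending PD curves based at $X_n$ over $W_n\langle t\rangle$ requires injectivity of $\theta_n$. Your own Step~1 gives this only if $X_n$ already extends to $W_{n+1}$, so the argument is circular.
\item Tangent directions from $H^1(X,T_X)$ move the obstruction only within $\operatorname{im}\delta_1$. Indeed $\delta_r(p^{r-1}v)=\delta_1(v)$, and this is the case $r=1$ of Lemma~\ref{tower:correction}. That image need not contain $o_n(X_n)$ when $E>1$.
\end{itemize}

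\textbf{What the paper does instead.}
\begin{itemize}
\item It integrates $u\in H^1(X_r,T_r)$ along curves over $\mathcal A_{r,n}$ based at $X_r$ (Lemma~\ref{tower:pd-curves}). This is legitimate because $\theta_r$ is injective, $X_r$ being extendable to $W_{r+1}$. The boundary maps vanish because $J_{j+1}\to J_j$ splits.
\item There is no PD map $\mathcal A_{r,n}\to W_n$. So it classifies the curve by $\beta:R\to\mathcal A_{r,n}$ from a miniversal hull $R=W[[x_1,\dots,x_m]]/J$, lifts the coefficients to $W$, and evaluates at $t=p^{n-r}$.
\item The estimate $r+sj-v_p(j!)\ge n+1$ for $2\le j\le n$, with $s=n-r\ge2$, shows this gives a $W_n$-point congruent to the old one modulo $p^{n-r}$.
\item A cotangent-complex and Kodaira--Spencer computation then gives $o_n(X_n^u)-o_n(X_n)=\delta_r(u)$ exactly (Lemma~\ref{tower:correction}). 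Choosing $\delta_r(u)=-o_n(X_n)$ makes the corrected lifting extend to $W_{n+1}$.
\end{itemize}
These ingredients are what your Step~2 lacks: intermediate injectivity of $\theta_{r+1}$, curves based at $X_r$ rather than $X_n$, and evaluation through the versal hull with an explicit obstruction-change formula.
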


The edge-map hypothesis will be in force throughout this section.
For a lifting $X_r/W_r$, put
\[
 V_r:=\RG(X_r,\OO_{X_r}),\qquad
 K_r:=\RG(X_r,\Fil^1\Omega^\bullet_{X_r/W_r}).
\]
\subsection{Relative periods and coefficient maps}

We need a relative version of the differential and obstruction
comparisons for the period map. The obstruction comparison below is stated for extensions
$A'\twoheadrightarrow A$ whose kernel is finite free over $W_r$;
this includes the extensions used in Lemma~\ref{tower:pd-curves}.

Fix $r\geq1$ and a lifting $Y=X_r$ over $W_r$.
Crystalline--de Rham comparison gives
\[
 c_Y:C_r\xrightarrow{\sim}\RG(Y,\Omega^\bullet_{Y/W_r}).
\]
Use this comparison to define the Hodge quotient
\[
 q_Y:=\RG(\operatorname{pr}_0)\circ c_Y:
 C_r\longrightarrow V_r.
\]
Thus $q_Y$ is the map denoted $q_{\mathrm H,r}$ in
Lemma~\ref{common:graph}, and its fibre is $K_r$.
Put
\[
 \beta_r:=q_Y\circ s_r|_{Q_r},\qquad
 a_r:=\beta_r^{-1}\circ q_Y\circ s_r|_{P_r}.
\]
Lemma~\ref{common:graph} identifies $V_r$ with $Q_r$ via
$\beta_r$ and $K_r$ with $P_r$ via the fibre inclusion
$s_r\circ(\mathrm{id},-a_r)$. Together with the section
$s_r|_{Q_r}\circ\beta_r^{-1}:V_r\to C_r$, these give a splitting
\[
 C_r\simeq K_r\oplus V_r
\]
under which $q_Y$ is the second projection. We use these
identifications, including their homotopy data, compatibly
with reduction and with the fixed data $(s,\alpha,h_0)$.
In particular, at $r=1$ the equivalence $P_1\simeq K_1$ is
the equivalence $u$ used in Section~2 to define $\theta_1$.
For every $W_r$-algebra $A$, we have
$C_r\otimes_{W_r}^{\mathbf L}A\simeq C_A$.

An augmented PD $W_r$-algebra means a local Artinian algebra
\[
 W_r\longrightarrow A\longrightarrow W_r
\]
whose composite is the identity, together with divided powers
on $J_A=\ker(A\to W_r)$; morphisms preserve both structure maps
and the divided powers. The relative PD constructions use the
zero PD ideal on $W_r$.
To compare with Section~2, put
\[
 I_A=\ker(A\to W_r\to k)=pA+J_A.
\]
The divided powers on $J_A$ and the canonical divided powers
on $pA$ agree on $pA\cap J_A=pJ_A$, and hence combine to give
divided powers on $I_A$ compatible with those on $pW$.
With this absolute PD structure, $W_r\to A\to W_r$ also consists
of morphisms in $\operatorname{Art}^{\mathrm{pd}}_W(k)$, where
$W_r$ has its canonical divided powers on $pW_r$.

Let $\Def_{Y/W_r}(A)$ be the space of smooth proper deformations
of $Y$ over $A$ with a specified identification of their
reduction over $W_r$ with $Y$. Using $\mathcal U_C$ from
Definition~\ref{common:period-domain}, define
\[
 \Per_{Y/W_r}(A)
 :=\operatorname{hofib}_{q_Y}\bigl(
 \mathcal U_C(A)\longrightarrow\mathcal U_C(W_r)\bigr).
\]
Explicitly, a point is a quadruple $(V_A,q_A,\rho_A,h_A)$ with
$V_A$ perfect and
\[
 \begin{gathered}
 q_A:C_A\longrightarrow V_A,\qquad
 \rho_A:V_A\otimes_A^{\mathbf L}W_r\xrightarrow{\sim}V_r,\\
 h_A:\rho_A\circ(q_A\otimes_A^{\mathbf L}W_r)\simeq q_Y
 \quad\text{in }\operatorname{Map}_{\mathcal D(W_r)}(C_r,V_r).
 \end{gathered}
\]
The equivalences between such points retain all coherent
homotopies, as in Definition~\ref{common:period-domain}.
Thus the source is still $C_A$, and the reduction is now fixed
over $W_r$. If $y=\operatorname{per}_{X,W_r}(Y)\in\Per_X(W_r)$,
including the marking over $k$, associativity of homotopy
pullbacks gives
\refstepcounter{equation}
\[
 \Per_{Y/W_r}(A)\simeq
 \operatorname{hofib}_{y}\bigl(
 \Per_X(A)\longrightarrow\Per_X(W_r)\bigr).
  \tag{\theequation}\label{eq:relative-period-fibre}
\]
Its base point is the constant quotient
$q_Y\otimes_{W_r}^{\mathbf L}A:C_A\to V_r\otimes_{W_r}^{\mathbf L}A$,
with its natural reduction data.

For a relative period point, define, as in Section~2,
\[
 \beta_A:=q_A\circ s_A|_{Q_A},\qquad
 a_A:=\beta_A^{-1}\circ q_A\circ s_A|_{P_A}.
\]
Here $\beta_A$ is an equivalence in $\mathcal D(A)$
by derived Nakayama.
The relative marking gives a specified homotopy
$a_A\otimes_A^{\mathbf L}W_r\simeq a_r$, so the graph description
already gives
\[
 \Per_{Y/W_r}(A)\simeq
 \operatorname{hofib}_{a_r}\bigl(
 \Omega^\infty D_A\longrightarrow\Omega^\infty D_r\bigr).
\]

To place the constant quotient at the origin, we now change
coordinates by the automorphism
\[
 \tau_r:=
 \begin{pmatrix}
  \mathrm{id}&0\\
  -a_r&\mathrm{id}
 \end{pmatrix}
 :P_r\oplus Q_r\xrightarrow{\sim}P_r\oplus Q_r,
 \qquad s'_r:=s_r\circ\tau_r.
\]
Its first summand maps by $s_r\circ(\mathrm{id},-a_r)$,
so $s'_r$ realizes the splitting $C_r\simeq K_r\oplus V_r$
described above. Writing
$s'_A:=s'_r\otimes_{W_r}^{\mathbf L}A$, we obtain
\[
 \beta_A^{-1}\circ q_A\circ s'_A
 =
 \bigl(a_A-a_r\otimes_{W_r}^{\mathbf L}A,\mathrm{id}\bigr).
\]
The relative marking supplies a nullhomotopy of the reduction
of the first component. Thus this change of coordinates gives
\refstepcounter{equation}
\[
 \begin{aligned}
 \operatorname{Graph}_{Y/W_r,A}:\Per_{Y/W_r}(A)
 &\xrightarrow{\sim}\Omega^\infty\operatorname{fib}(D_A\to D_r)\\
 &\simeq\Omega^\infty(D_r\otimes_{W_r}^{\mathbf L}J_A),
 \end{aligned}
  \tag{\theequation}\label{eq:relative-graph-chart}
\]
which is the graph chart associated with
$\mathbf R\Hom_{W_r}(K_r,V_r)\simeq D_r$.

\begin{remark}
\begin{enumerate}
\item
In Section~2, the fixed splitting $s:P\oplus Q\simeq C$
and the reduction data $(\alpha,h_0)$ give a base point
$o_A\in\Per_X(A)$ represented by
\[
 C_A\xrightarrow{\operatorname{pr}_{Q_A}\circ s_A^{-1}}Q_A,
 \qquad
 \operatorname{Graph}_A(o_A)=0.
\]
No further translation or $k$-algebra structure on $A$
is required.

\item
Here the structure map $W_r\to A$ gives a relative base point
$o_{Y/W_r,A}\in\Per_{Y/W_r}(A)$ represented by the constant
quotient
\[
 q_Y\otimes_{W_r}^{\mathbf L}A:
 C_A\longrightarrow V_r\otimes_{W_r}^{\mathbf L}A,
\]
with its canonical reduction data.
For this point, the original and relative graph coordinates
satisfy
\[
 a_A=a_r\otimes_{W_r}^{\mathbf L}A,
 \qquad
 \operatorname{Graph}_{Y/W_r,A}(o_{Y/W_r,A})=0.
\]
The second equality uses the translation
in \eqref{eq:relative-graph-chart}; it does not require
$a_r=0$ in the original coordinates.
\end{enumerate}
\end{remark}

\begin{lemma}\label{tower:relative-period}
With the notation above, the following assertions hold.
\begin{enumerate}
\item\label{tower:relative-period-differential}
There is a natural relative period map
\[
 \operatorname{per}_{Y/W_r}:
 \Def_{Y/W_r}\longrightarrow\Per_{Y/W_r}.
\]
Its differential at the constant deformation is the morphism
\[
 \Theta_Y:
 \RG(Y,T_{Y/W_r})[1]
 \longrightarrow
 \mathbf R\Hom_{W_r}(K_r,V_r)\simeq D_r
\]
adjoint to
\[
 \begin{aligned}
 \RG(Y,T_{Y/W_r})[1]\otimes_{W_r}^{\mathbf L}K_r
 &\longrightarrow
 \RG(Y,T_{Y/W_r})\otimes_{W_r}^{\mathbf L}
       \RG(Y,\Omega^1_{Y/W_r})\\
 &\longrightarrow V_r.
 \end{aligned}
\]
The first arrow is induced by
$\operatorname{pr}_1:
\Fil^1\Omega^\bullet_{Y/W_r}\to\Omega^1_{Y/W_r}[-1]$,
and the second is cup product followed by evaluation.

More precisely, let $M$ be a connective perfect $W_r$-complex.
Here $\operatorname{sqz}^{\mathrm{pd}}_{W_r}(M)$ denotes the
trivial PD square-zero extension of $W_r$ by $M$ from
\cite[Definition~4.32]{BT}.
There is a commutative diagram of pointed spaces,
natural in $M$,
\[
 \begin{tikzcd}[column sep=large]
 \Def_{Y/W_r}(\operatorname{sqz}^{\mathrm{pd}}_{W_r}(M))
   \arrow[r,"\operatorname{per}_{Y/W_r}"]
   \arrow[d,"\simeq"']
 &
 \Per_{Y/W_r}(\operatorname{sqz}^{\mathrm{pd}}_{W_r}(M))
   \arrow[d,"\simeq"]
 \\
 \Omega^\infty\!
 \bigl(\RG(Y,T_{Y/W_r})[1]
       \otimes_{W_r}^{\mathbf L}M\bigr)
   \arrow[r,"\Omega^\infty(\Theta_Y\otimes^{\mathbf L}M)"']
 &
 \Omega^\infty\!
 \bigl(D_r\otimes_{W_r}^{\mathbf L}M\bigr).
 \end{tikzcd}
\]
The base points are the constant deformation
$\OO_Y\otimes_{W_r}^{\mathbf L}\operatorname{sqz}^{\mathrm{pd}}_{W_r}(M)$ and the zero graph
coordinate, respectively.

\item\label{tower:relative-period-obstruction}
Let $A'\twoheadrightarrow A$ be a surjection of augmented
PD $W_r$-algebras with kernel $I$. Suppose that $I$ is finite
free over $W_r$ and that
\[
 J_{A'}I=0,\qquad
 \gamma_m(x)=0\quad(x\in I,\ m\geq2).
\]
For $\mathcal Y\in\Def_{Y/W_r}(A)$, write
\[
 \eta_{\mathcal Y}^{\mathrm{rel}}
 \in H^0(D_r\otimes_{W_r}^{\mathbf L}J_A)
\]
for the image of its relative period under
$\pi_0(\operatorname{Graph}_{Y/W_r,A})$, and let
\[
 \operatorname{ob}(\mathcal Y,A'/A)
 \in H^2(Y,T_{Y/W_r})\otimes_{W_r} I
\]
be its geometric obstruction class. Then
\[
 H^1(\Theta_Y\otimes_{W_r}^{\mathbf L}I)
 \bigl(\operatorname{ob}(\mathcal Y,A'/A)\bigr)
 =
 \delta\bigl(\eta_{\mathcal Y}^{\mathrm{rel}}\bigr),
\]
where
\[
 \delta:
 H^0(D_r\otimes_{W_r}^{\mathbf L}J_A)
 \longrightarrow
 H^1(D_r\otimes_{W_r}^{\mathbf L}I)
\]
is the boundary associated with
$0\to I\to J_{A'}\to J_A\to0$.
We use the compatible obstruction and boundary conventions
of Section~2.

\item\label{tower:relative-period-compatibility}
For $1\leq s\leq r$, put $Y_s=Y\otimes_{W_r} W_s$.
Under the derived base-change and graph identifications,
\[
 \Theta_Y\otimes_{W_r}^{\mathbf L}W_s
 \simeq \Theta_{Y_s}.
\]
\end{enumerate}
\end{lemma}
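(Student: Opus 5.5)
The plan is to transport Brantner--Taelman's absolute constructions \cite[Theorem~6.18 and Proposition~7.15]{BT} to the relative base $W_r$. The idea is to regard $Y/W_r$ as the ``special fibre'' and $W_r\to A\to W_r$ as a PD thickening over $W_r$. For (1), the relative period map is defined exactly as in Section~2. A deformation $\mathcal Y/A$ with marking over $W_r$ is sent to the crystalline comparison $c^{\gamma}_{\mathcal Y}:C_A\simeq\RG(\mathcal Y,\Omega^\bullet_{\mathcal Y/A})$ for the PD thickening $(A,I_A,\gamma)$, followed by $\RG(\operatorname{pr}_0)$. Base change of crystalline cohomology along $A\to W_r$ together with the marking supplies $(\rho_A,h_A)$. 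Naturality in $A$ follows from functoriality of crystalline comparison in PD morphisms. Equivalently, $\operatorname{per}_{Y/W_r}$ is the map induced on homotopy fibres by $\operatorname{per}_X$ over $\operatorname{per}_{X,W_r}(Y)$, which is compatible with \eqref{eq:relative-period-fibre}.

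To compute the differential, evaluate on $A=\operatorname{sqz}^{\mathrm{pd}}_{W_r}(M)$. Since $Y$ is smooth, $\Def_{Y/W_r}(\operatorname{sqz}(M))\simeq\Omega^\infty(\RG(Y,T_{Y/W_r})[1]\otimes M)$ by the usual cotangent complex description. For a deformation given by a class $\xi$, the de Rham complex of the trivial deformation is twisted by the Gauss--Manin/Kodaira--Spencer connection. I will argue as in \cite[Theorem~6.18]{BT}: the crystalline identification $C_A\simeq\RG(Y,\Omega^\bullet_Y)\otimes A$ identifies the new Hodge quotient with the old one perturbed by contraction with $\xi$. Concretely, the perturbation is $\Fil^1\to\Omega^1[-1]\xrightarrow{\iota_\xi}\OO\otimes M[\,\cdot\,]$. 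In the shifted graph coordinate $s'_r$ this becomes $\Theta_Y\otimes M$. Since BT's argument is carried out over a general base ring and uses only Cartan homotopy formulas for the Lie derivative, I expect it to apply verbatim over $W_r$. Naturality in $M$ then gives the commutative square.

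For (2), I would deduce the obstruction statement from (1) by the standard ``pullback square'' argument. Because $I$ is square-zero with trivial divided powers and $J_{A'}I=0$, we have a homotopy pullback $A'\simeq A\times_{\operatorname{sqz}^{\mathrm{pd}}_{W_r}(I[1])}W_r$ of augmented PD $W_r$-algebras. The finite freeness of $I$ over $W_r$ ensures $I[1]$ is a connective perfect $W_r$-module, so (1) applies. Both $\Def_{Y/W_r}$ and $\Per_{Y/W_r}$ preserve such pullbacks: for $\Def$ this is the deformation-theoretic Schlessinger condition, and for $\Per$ it follows from the graph chart \eqref{eq:relative-graph-chart}, which is linear in $J_A$. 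The obstruction to lifting $\mathcal Y$ is then the image in $\pi_0\Def(\operatorname{sqz}(I[1]))=H^2(Y,T)\otimes I$. The period obstruction is the image in $\pi_0\Omega^\infty(D_r\otimes I[1])=H^1(D_r\otimes I)$, which is precisely $\delta(\eta^{\mathrm{rel}})$. The commutative square of (1) with $M=I[1]$ identifies these two, giving the stated formula. Signs are fixed by using the conventions of Section~2.

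Part (3) follows by base change. All constructions (the cup product with evaluation, $\operatorname{pr}_1$, and the graph identifications) commute with $-\otimes^{\mathbf L}_{W_r}W_s$, by flatness of $Y$ and derived base change for $\RG(Y,T)$, $K_r$ and $V_r$. The identification $D_r\otimes W_s\simeq D_s$ is compatible with the reductions of $s'_r$, as recorded after Lemma~\ref{common:graph}.

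The main obstacle is the differential computation in (1): making precise, at the level of coherent homotopies over $W_r$ rather than over a field, that crystalline comparison on $\operatorname{sqz}^{\mathrm{pd}}(M)$ differs from the trivial one by contraction with the Kodaira--Spencer class. Here I would rely on BT's derived treatment with the base changed from $k$ to $W_r$, checking that nothing in it uses the base being a field.
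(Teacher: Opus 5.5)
Your proposal follows the paper's proof: you realize the relative period map as the map on homotopy fibres of the absolute one, obtain the differential by running the arguments of \cite[Theorem~6.18]{BT} over $W_r$ on $\operatorname{sqz}^{\mathrm{pd}}_{W_r}(M)$, derive part~(2) from the homotopy pullback $A'\simeq A\times^h_{\operatorname{sqz}^{\mathrm{pd}}_{W_r}(I[1])}W_r$ together with pullback preservation for $\Def_{Y/W_r}$ and $\Per_{Y/W_r}$, and get part~(3) by base change. The paper makes two points more explicit: on animated bases it defines the period map through $c_Y$ and the relative PD ideal $J_A$, checking agreement with your $I_A$-based definition only on discrete $A$, and it computes the differential first on loop spaces via derivations $\Omega^1_{Y/W_r}\to\OO_Y\otimes^{\mathbf L}_{W_r}M$ and then deloops using $\operatorname{sqz}^{\mathrm{pd}}_{W_r}(M)\simeq W_r\times^h_{\operatorname{sqz}^{\mathrm{pd}}_{W_r}(M[1])}W_r$.
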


\begin{proof}
The proof follows the arguments of
\cite[Theorem~6.18 and Proposition~7.15]{BT},
adapted to the relative setting over $W_r$.
We focus on the points requiring verification over $W_r$,
making explicit the period map on square-zero extensions
and its compatibility with obstruction classes.

We first construct the relative period map.
For a deformation $\mathcal Y/A$, divided-power de Rham
invariance and base change give
\[
 \begin{aligned}
 C_A
 &\simeq
 \RG(Y,\Omega^\bullet_{Y/W_r})
       \otimes_{W_r}^{\mathbf L}A\\
 &\simeq
 \RG(\mathcal Y,\Omega^\bullet_{\mathcal Y/A})
 \longrightarrow \RG(\mathcal Y,\OO_{\mathcal Y}),
 \end{aligned}
\]
where the first equivalence uses $c_Y$ and the last map
is induced by the projection
$\Omega^\bullet_{\mathcal Y/A}\to\OO_{\mathcal Y}$;
see \cite[Theorem~5.22 and Lemma~6.12]{BT}.
The specified identification
$\mathcal Y\otimes_A W_r\simeq Y$ identifies the reduction
of the displayed composite with $q_Y$, including the
required homotopy.
This composite therefore defines
$\operatorname{per}_{Y/W_r}(A)(\mathcal Y)$.
To compare with Section~2, recall that $I_A=pA+J_A$.
The comparison using $I_A$ and the comparison through $Y$
using $J_A$ fit into the homotopy commutative diagram
\[
 \begin{tikzcd}[column sep=small]
 C_A\arrow[r,"\sim","I_A"']
    \arrow[d,"c_Y\otimes_{W_r}^{\mathbf L}A"']&
 \RG(\mathcal Y,\Omega^\bullet_{\mathcal Y/A})\\
 \RG(Y,\Omega^\bullet_{Y/W_r})
       \otimes_{W_r}^{\mathbf L}A
       \arrow[ur,"\sim"',"J_A"]&
 \end{tikzcd}
\]
by the naturality of
\cite[Corollary~5.12 and Theorem~5.22]{BT}, compatibly
with reduction to $W_r$.
Thus composing either path with projection to
$\RG(\mathcal Y,\OO_{\mathcal Y})$ gives the same period
point, with the same reduction marking.
It follows from \eqref{eq:relative-period-fibre} that
$\operatorname{per}_{Y/W_r}$ is the map on homotopy fibres
induced by the period map of Section~2.

We also use augmented animated PD $W_r$-algebras in the
sense of \cite[Definition~4.4]{BT}, with specified PD
quotient $W_r$ and underlying Artinian animated
$W$-algebra as in \cite[Definition~3.1]{BT}.
We write $\Def_X^{\mathrm{an}}$ for the functor of
\cite[Construction~3.22 and Remark~3.23]{BT}:
$\Def_X^{\mathrm{an}}(A)$ is the space of sheaves
$\mathcal B$ of animated $A$-algebras on $X_{\mathrm{Zar}}$
equipped with an equivalence of sheaves of animated $k$-algebras
\[
 \mathcal B\otimes_A^{\mathbf L}k
 \xrightarrow{\sim}\OO_X,
\]
with equivalences of such pairs and all higher homotopies
retained.
For ordinary $A$, this agrees with $\Def_X(A)$ from
Section~2 by \cite[Corollaries~3.32 and~3.35]{BT}.
For an augmented animated PD $W_r$-algebra $A$, define
\[
 \Def_{Y/W_r}(A)
 :=
 \operatorname{hofib}_Y\bigl(
 \Def_X^{\mathrm{an}}(A)\to
 \Def_X^{\mathrm{an}}(W_r)\bigr).
\]
For ordinary $A$, this recovers the space of deformations
equipped with an identification of their reduction over
$W_r$ with $Y$.

To extend the period domain to animated $A$, use the
same definition by perfect quotients.
In the graph description, define
\[
 J_A:=\operatorname{fib}(A\to W_r)
 \quad\text{in }\mathcal D(W_r).
\]
The graph argument giving \eqref{eq:relative-graph-chart}
then yields
\[
 \Per_{Y/W_r}(A)
 \simeq
 \Omega^\infty
 \bigl(D_r\otimes_{W_r}^{\mathbf L}J_A\bigr).
\]
For animated bases, the derived divided-power de Rham
construction of \cite[Definition~5.6]{BT} gives the
relative period map by the same construction as above.
The construction commutes with base change by
\cite[Proposition~5.11, Corollary~5.12,
Theorem~5.22, and Lemma~6.12]{BT}.
The hypothesis of Lemma~6.12 holds because every morphism
of augmented PD $W_r$-algebras induces
$\mathrm{id}_{W_r}$ on the PD quotients.
For a diagram $A_1\to A_0\leftarrow A_2$ whose two maps
are surjective on $\pi_0$, we have
\refstepcounter{equation}
\[
 \begin{gathered}
 F(A_1\times^h_{A_0}A_2)
 \simeq F(A_1)\times^h_{F(A_0)}F(A_2),\\
 F=\Def_{Y/W_r}\ \text{or}\ \Per_{Y/W_r}.
 \end{gathered}
 \tag{\theequation}\label{eq:relative-period-pullbacks}
\]
For $\Def_{Y/W_r}$, apply homotopy base change along the
point
\[
 \{Y\}\longrightarrow\Def_X^{\mathrm{an}}(W_r)
\]
to the pullback equivalence for $\Def_X^{\mathrm{an}}$
from \cite[Proposition~3.25]{BT}.
Forgetting the PD structure preserves the underlying
pullback of animated rings by
\cite[Corollary~4.7]{BT}, so the same argument applies
to the PD algebras considered here.
For $\Per_{Y/W_r}$, it follows from
\eqref{eq:relative-graph-chart}, since
$A\mapsto J_A$, derived tensor product, and $\Omega^\infty$
preserve the homotopy pullbacks in
\eqref{eq:relative-period-pullbacks}.

We next prove (1).
Let $M$ be a connective perfect $W_r$-complex.
Here connective means $H^i(M)=0$ for $i>0$.
The underlying algebra of
$\operatorname{sqz}^{\mathrm{pd}}_{W_r}(M)$ is $W_r\oplus M$;
for discrete $M$, its divided powers satisfy
$\gamma_m(x)=0$ for $x\in M$ and $m\geq2$.
At the constant deformation with structure sheaf
$\OO_Y\otimes_{W_r}^{\mathbf L}
\operatorname{sqz}^{\mathrm{pd}}_{W_r}(M)$ and the period
point with zero graph coordinate, respectively, we have
\refstepcounter{equation}
\[
 \begin{aligned}
 \Omega\Def_{Y/W_r}(\operatorname{sqz}^{\mathrm{pd}}_{W_r}(M))
 &\simeq
 \operatorname{Map}_{\OO_Y}
 \bigl(\Omega^1_{Y/W_r},
       \OO_Y\otimes_{W_r}^{\mathbf L}M\bigr),\\
 \Omega\Per_{Y/W_r}(\operatorname{sqz}^{\mathrm{pd}}_{W_r}(M))
 &\simeq
 \operatorname{Map}_{\mathcal D(W_r)}
 \bigl(K_r[1],V_r\otimes_{W_r}^{\mathbf L}M\bigr).
 \end{aligned}
 \tag{\theequation}\label{eq:relative-period-loops}
\]
Here $\Omega$ denotes the based loop space, and
$\operatorname{Map}_{\OO_Y}$ denotes the mapping space
in the derived $\infty$-category of $\OO_Y$-modules.
In the first line, a loop is an automorphism of the constant
deformation with a specified homotopy of its reduction
to the identity.
By the scalar-extension adjunction, such an automorphism
corresponds to a section, in sheaves of animated
$W_r$-algebras, of the projection
$\OO_Y\oplus(\OO_Y\otimes_{W_r}^{\mathbf L}M)\to\OO_Y$,
and hence to a derivation.
The square-zero adjunction of \cite[Proposition~2.31]{BT}
identifies these derivations with the first mapping space
in \eqref{eq:relative-period-loops}, since smoothness gives
$L_{Y/W_r}\simeq\Omega^1_{Y/W_r}$ for the cotangent complex.
The second line follows from
\eqref{eq:relative-graph-chart} and
$D_r\simeq\mathbf R\Hom_{W_r}(K_r,V_r)$.

To compute $\Omega\operatorname{per}_{Y/W_r}(\operatorname{sqz}^{\mathrm{pd}}_{W_r}(M))$
under \eqref{eq:relative-period-loops}, use the natural diagram
of fibre sequences of sheaves of $W_r$-modules from
\cite[Proposition~5.16]{BT}:
\refstepcounter{equation}
\[
 \begin{tikzcd}[column sep=small]
 \Fil^1\Omega^\bullet_{Y/W_r}
   \arrow[r]\arrow[d,"\operatorname{pr}_1"']&
 \Omega^\bullet_{Y/W_r}\arrow[r]\arrow[d]&
 \OO_Y\arrow[d,"d"]\\
 \Omega^1_{Y/W_r}[-1]\arrow[r]&
 0\arrow[r]&
 \Omega^1_{Y/W_r}.
 \end{tikzcd}
 \tag{\theequation}\label{eq:relative-hodge-projection}
\]
Here $\operatorname{pr}_1$ is the quotient to
$\operatorname{gr}^1\Omega^\bullet_{Y/W_r}
=\Omega^1_{Y/W_r}[-1]$, and $d$ is the universal derivation.
To apply the arguments of \cite[Lemmas~6.21--6.23]{BT}
over $W_r$, we use the base-change compatibility established
above and the PD cotangent adjunction of
\cite[Proposition~4.36]{BT}.
For this adjunction, $W_r$ and $\OO_Y$ carry zero PD
ideals, so \cite[Proposition~4.27]{BT} and smoothness give
$L^{\mathrm{pd}}_{Y/W_r}\simeq L_{Y/W_r}
\simeq\Omega^1_{Y/W_r}$.
For a map
$\lambda:\Omega^1_{Y/W_r}\to
\OO_Y\otimes_{W_r}^{\mathbf L}M$,
the arguments of \cite[Lemmas~6.22--6.23]{BT},
applied to \eqref{eq:relative-hodge-projection}, show that
the image of the loop represented by $\lambda$ under
$\Omega\operatorname{per}_{Y/W_r}(\operatorname{sqz}^{\mathrm{pd}}_{W_r}(M))$ is obtained by
applying derived global sections to
\[
 \Fil^1\Omega^\bullet_{Y/W_r}[1]
 \xrightarrow{\operatorname{pr}_1[1]}
 \Omega^1_{Y/W_r}
 \xrightarrow{\lambda}
 \OO_Y\otimes_{W_r}^{\mathbf L}M.
\]
Taking derived global sections gives
\refstepcounter{equation}
\[
 \lambda\longmapsto
 \left(
 K_r[1]
 \xrightarrow{\RG(\operatorname{pr}_1)[1]}
 \RG(Y,\Omega^1_{Y/W_r})
 \xrightarrow{\RG(\lambda)}
 V_r\otimes_{W_r}^{\mathbf L}M
 \right).
 \tag{\theequation}\label{eq:relative-period-loop-map}
\]
Here the projection formula identifies
$\RG(Y,\OO_Y\otimes_{W_r}^{\mathbf L}M)$ with
$V_r\otimes_{W_r}^{\mathbf L}M$.
Formula \eqref{eq:relative-period-loop-map} describes
$\Omega\operatorname{per}_{Y/W_r}(\operatorname{sqz}^{\mathrm{pd}}_{W_r}(M))$ as a map of spaces,
naturally in $M$ and including the compatibility homotopies.

To recover the period map itself, apply
\eqref{eq:relative-period-pullbacks} to
\[
 \operatorname{sqz}^{\mathrm{pd}}_{W_r}(M)\simeq W_r\times^h_{\operatorname{sqz}^{\mathrm{pd}}_{W_r}(M[1])}W_r.
\]
Since both relative functors take $W_r$ to a contractible
space, this gives
\[
 F(\operatorname{sqz}^{\mathrm{pd}}_{W_r}(M))\simeq\Omega F(\operatorname{sqz}^{\mathrm{pd}}_{W_r}(M[1])),
 \qquad F=\Def_{Y/W_r},\Per_{Y/W_r}.
\]
Naturality identifies $\operatorname{per}_{Y/W_r}(\operatorname{sqz}^{\mathrm{pd}}_{W_r}(M))$
with $\Omega\operatorname{per}_{Y/W_r}(\operatorname{sqz}^{\mathrm{pd}}_{W_r}(M[1]))$
under these equivalences.
Thus \eqref{eq:relative-period-loops} with $M[1]$ gives
\refstepcounter{equation}
\[
 \Def_{Y/W_r}(\operatorname{sqz}^{\mathrm{pd}}_{W_r}(M))
 \simeq
 \Omega^\infty\bigl(
 \RG(Y,T_{Y/W_r})[1]\otimes_{W_r}^{\mathbf L}M\bigr).
 \tag{\theequation}\label{eq:relative-period-tangent-space}
\]
Applying \eqref{eq:relative-period-loop-map} with $M[1]$
gives a map
\[
 K_r[1]\longrightarrow
 \bigl(V_r\otimes_{W_r}^{\mathbf L}M\bigr)[1].
\]
Shifting this map by $[-1]$ gives the corresponding
graph coordinate
\[
 K_r\longrightarrow V_r\otimes_{W_r}^{\mathbf L}M.
\]
Under \eqref{eq:relative-period-tangent-space} and
\eqref{eq:relative-graph-chart}, we therefore obtain
\refstepcounter{equation}
\[
 \operatorname{per}_{Y/W_r}(\operatorname{sqz}^{\mathrm{pd}}_{W_r}(M))
 \simeq
 \Omega^\infty(\Theta_Y\otimes_{W_r}^{\mathbf L}M).
 \tag{\theequation}\label{eq:relative-period-square-zero}
\]
Tensor--Hom adjunction applied to
\eqref{eq:relative-period-loop-map}, after replacing $M$
by $M[1]$ and shifting the resulting map by $[-1]$,
identifies $\Theta_Y$ with the composite
of the first Hodge projection, cup product, and evaluation
in (1).
This proves (1).

We now prove (2).
For the extension $A'\twoheadrightarrow A$ in the statement,
$I^2=0$ and all higher divided powers on $I$ vanish.
Both PD quotients are $W_r$.
Hence \cite[Proposition~4.39]{BT} gives, for the relative
PD cotangent complex,
$\tau_{\geq-1}L^{\mathrm{pd}}_{A/A'}\simeq I[1]$.
Since $J_{A'}I=0$, the $A'$-module structure on $I$
factors through the augmentation $A'\to W_r$.
By the PD cotangent adjunction
\cite[Corollary~4.35]{BT}, the truncation map
$L^{\mathrm{pd}}_{A/A'}\to I[1]$
corresponds to the first arrow in
\[
 A
 \longrightarrow
 \operatorname{sqz}^{\mathrm{pd}}_A(I[1])
 \longrightarrow
 \operatorname{sqz}^{\mathrm{pd}}_{W_r}(I[1]).
\]
The first arrow is a morphism of PD $A'$-algebras over $A$.
The second is induced by the augmentation $A\to W_r$
and the identity on $I[1]$, since the $A$-module structure
on $I$ factors through $W_r$.
The $A'$-algebra compatibility gives a specified homotopy
between the composites
\[
 \begin{gathered}
 A'\longrightarrow A\longrightarrow \operatorname{sqz}^{\mathrm{pd}}_{W_r}(I[1])\\
 \text{and}\\
 A'\longrightarrow W_r\longrightarrow \operatorname{sqz}^{\mathrm{pd}}_{W_r}(I[1]),
 \end{gathered}
\]
where $W_r\to \operatorname{sqz}^{\mathrm{pd}}_{W_r}(I[1])$ is the zero section.
The specified homotopy gives a canonical comparison map
\[
 A'\longrightarrow
 A\times^h_{\operatorname{sqz}^{\mathrm{pd}}_{W_r}(I[1])}W_r.
\]
We show that this map is an equivalence by the argument of
\cite[Proposition~7.15]{BT}.
The source and target both map to $A$, with fibre $I$,
and the comparison induces the identity on $A$.
By the Hurewicz description in
\cite[Proposition~4.39]{BT}, the composite
\[
 I[1]\longrightarrow L^{\mathrm{pd}}_{A/A'}
 \longrightarrow
 \tau_{\geq-1}L^{\mathrm{pd}}_{A/A'}
 \simeq I[1]
\]
is the identity.
Thus the comparison also induces the identity on the
fibre $I$ and is therefore an equivalence.
Consequently, the square is a homotopy pullback:
\refstepcounter{equation}
\[
 A'\simeq
 A\times^h_{\operatorname{sqz}^{\mathrm{pd}}_{W_r}(I[1])}W_r.
 \tag{\theequation}\label{eq:relative-obstruction-pullback}
\]

Applying \eqref{eq:relative-period-pullbacks} to
\eqref{eq:relative-obstruction-pullback} and using the
naturality of the period map gives a commutative diagram
of homotopy fibre sequences
\refstepcounter{equation}
\[
 \begin{tikzcd}[column sep=small]
 \Def_{Y/W_r}(A')\arrow[r]\arrow[d]&
 \Def_{Y/W_r}(A)\arrow[r]\arrow[d]&
 \Def_{Y/W_r}(\operatorname{sqz}^{\mathrm{pd}}_{W_r}(I[1]))\arrow[d]\\
 \Per_{Y/W_r}(A')\arrow[r]&
 \Per_{Y/W_r}(A)\arrow[r]&
 \Per_{Y/W_r}(\operatorname{sqz}^{\mathrm{pd}}_{W_r}(I[1])).
 \end{tikzcd}
 \tag{\theequation}\label{eq:relative-obstruction-diagram}
\]
The obstruction construction from a square-zero extension
(cf. \cite[Definition~3.12]{BT}), applied to
\eqref{eq:relative-obstruction-pullback}, identifies the
image of $[\mathcal Y]$ under
\[
 \pi_0\Def_{Y/W_r}(A)
 \longrightarrow
 \pi_0\Def_{Y/W_r}(\operatorname{sqz}^{\mathrm{pd}}_{W_r}(I[1]))
 \simeq H^2(Y,T_{Y/W_r})\otimes_{W_r}I
\]
with $\operatorname{ob}(\mathcal Y,A'/A)$.
By \eqref{eq:relative-period-square-zero} with $M=I[1]$,
the rightmost vertical map in
\eqref{eq:relative-obstruction-diagram}
induces on $\pi_0$ the homomorphism
\[
 H^1(\Theta_Y\otimes_{W_r}^{\mathbf L}I):
 H^2(Y,T_{Y/W_r})\otimes_{W_r}I
 \longrightarrow H^1(D_r\otimes_{W_r}^{\mathbf L}I),
\]
where we use that $I$ is finite free.
Taking augmentation fibres in
\eqref{eq:relative-obstruction-pullback} identifies the
map $J_A\to I[1]$ induced by $A\to \operatorname{sqz}^{\mathrm{pd}}_{W_r}(I[1])$ with the
connecting map of the triangle
\[
 I\longrightarrow J_{A'}\longrightarrow J_A
 \longrightarrow I[1].
\]
Under \eqref{eq:relative-graph-chart}, tensoring this
triangle with $D_r$ and applying $\Omega^\infty$ gives the
lower row of \eqref{eq:relative-obstruction-diagram}.
Consequently, the map
\[
 \pi_0\Per_{Y/W_r}(A)
 \longrightarrow
 \pi_0\Per_{Y/W_r}(\operatorname{sqz}^{\mathrm{pd}}_{W_r}(I[1]))
\]
identifies with
\[
 \delta:
 H^0(D_r\otimes_{W_r}^{\mathbf L}J_A)
 \longrightarrow
 H^1(D_r\otimes_{W_r}^{\mathbf L}I).
\]
The commutativity of \eqref{eq:relative-obstruction-diagram}
now gives
\[
 H^1(\Theta_Y\otimes_{W_r}^{\mathbf L}I)
 \bigl(\operatorname{ob}(\mathcal Y,A'/A)\bigr)
 =
 \delta\bigl(\eta_{\mathcal Y}^{\mathrm{rel}}\bigr),
\]
with the conventions of Section~2.
This proves (2).

Finally, $K_r$, $V_r$, and $\RG(Y,T_{Y/W_r})$ are
perfect over $W_r$, and their derived base changes to
$W_s$ are the corresponding complexes for $Y_s$.
The first Hodge projection, cup product, and evaluation
also commute with derived base change.
Applying these compatibilities to the description of
$\Theta_Y$ in (1) gives
\[
 \Theta_Y\otimes_{W_r}^{\mathbf L}W_s\simeq\Theta_{Y_s}.
\]
This proves (3).
\end{proof}

Write
\[
 \theta_r=H^1(\Theta_{X_r}):
 H^2(X_r,T_{X_r/W_r})\longrightarrow H^1(D_r).
\]
For $r=1$, formula \eqref{eq:relative-period-loop-map},
under the identifications $u:P_1\simeq K_1$ and
$\alpha:Q_1\simeq V_1$, gives
$H^1(\Theta_X)=\theta_1$ as defined in
\eqref{eq:theta1-contraction}.
For an existing lifting $X_{r+1}$, the coefficient inclusion
$k\to W_{r+1}$, $1\mapsto p^r$, gives the commutative diagram
\refstepcounter{equation}
\[
 \begin{array}{ccc}
 H^2(X,T_X)&\xrightarrow{\theta_1}&H^1(D_1)\\[2pt]
 {\scriptstyle p^r}\downarrow&&\downarrow{\scriptstyle p^r}\\[2pt]
 H^2(X_{r+1},T_{X_{r+1}/W_{r+1}})
 &\xrightarrow{\theta_{r+1}}&H^1(D_{r+1}).
 \end{array}
  \tag{\theequation}\label{eq:relative-theta-coefficients}
\]
The vertical arrows are coefficient maps over $W_{r+1}$.

The following is the analogue over $W_r$ of the injectivity
criterion in \cite[Proposition~7.6]{BT}.

\begin{lemma}\label{tower:injectivity}
If $X_r$ extends to $W_{r+1}$, then $\theta_r$ is injective.
\end{lemma}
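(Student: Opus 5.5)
We mimic the argument for $\theta_1$ in Section~2, now over $W_r$. The plan is to identify $H^{d-1}$ of $\alpha$-transported $\theta_r(\xi)$ with a contraction pairing composed with the first Hodge edge map. Then we use the hypothesis of Proposition~\ref{tower:extension} (edge surjectivity for $Y=X_{r+1}$, which exists by assumption) together with relative Serre duality to conclude injectivity.

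First, we use Lemma~\ref{tower:relative-period}(1). For $\xi\in H^2(X_r,T_{X_r/W_r})$, the class $\theta_r(\xi)\in H^1(D_r)\cong\Hom_{D(W_r)}(K_r,V_r[1])$ is the composite
\[
K_r\xrightarrow{\RG(\operatorname{pr}_1)}\RG(X_r,\Omega^1_{X_r/W_r})[-1]\xrightarrow{\xi\smile}\RG(X_r,\OO_{X_r})[1].
\]
Applying $H^{d-1}$ gives the identity
\[
H^{d-1}(\theta_r(\xi))=\kappa_r(\xi)\circ e_r,
\]
where $e_r:H^{d-1}(K_r)\to H^{d-2}(X_r,\Omega^1_{X_r/W_r})$ is the edge map. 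The pairing
\[
\kappa_r:H^2(X_r,T_{X_r/W_r})\to\Hom_{W_r}\bigl(H^{d-2}(X_r,\Omega^1_{X_r/W_r}),H^d(X_r,\OO_{X_r})\bigr)
\]
is given by cup product and evaluation, with $H^d(X_r,\OO_{X_r})\cong W_r$ by Lemma~\ref{common:graph}(1). Since $X_r$ extends to $W_{r+1}$, the hypothesis of Proposition~\ref{tower:extension} applied to $Y=X_{r+1}$ says that $e_r$ is surjective. Hence $\theta_r(\xi)=0$ forces $\kappa_r(\xi)\circ e_r=0$, and therefore $\kappa_r(\xi)=0$.

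It remains to prove that $\kappa_r$ is injective, and this is the main obstacle. Relative Grothendieck duality for the smooth proper $X_r/W_r$ with $\omega_{X_r/W_r}\simeq\OO_{X_r}$ gives a perfect pairing of complexes
\[
\RG(X_r,T_{X_r/W_r})\simeq\mathbf R\Hom_{W_r}\bigl(\RG(X_r,\Omega^1_{X_r/W_r}),W_r\bigr)[-d].
\]
Since $W_r$ is self-injective (it is Gorenstein Artinian), $\mathbf R\Hom_{W_r}(-,W_r)$ is exact on cohomology. This yields an isomorphism
\[
H^2(X_r,T_{X_r/W_r})\cong\Hom_{W_r}\bigl(H^{d-2}(X_r,\Omega^1_{X_r/W_r}),W_r\bigr),
\]
and this isomorphism is $\kappa_r$ up to the identification of the trace with $H^d(\OO_{X_r})\cong W_r$. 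The point needing care is that no local freeness of the cohomology is required: injectivity of $W_r$ as a module over itself removes the Ext terms, and the duality isomorphism is indeed induced by cup product followed by evaluation and trace.

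Alternatively, one could induct on $r$ using the diagram \eqref{eq:relative-theta-coefficients} and the compatibility of Lemma~\ref{tower:relative-period}(3). However, the direct duality argument above avoids the need for exactness of the coefficient sequences, so we will use it.
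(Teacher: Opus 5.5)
Your proposal is correct and follows the paper's argument: you identify the degree-$(d-1)$ effect of $\theta_r(\xi)$ with $\kappa_r(\xi)\circ e_r$, obtain surjectivity of $e_r$ from the hypothesis of Proposition~\ref{tower:extension} applied to the extension $X_{r+1}$, and conclude via relative Serre duality over the self-injective ring $W_r$. The only cosmetic difference is that the paper routes the first step through the projection $V_r\to W_r[-d]$ and the map $\rho_r$. You instead apply the functor $H^{d-1}$ directly, which suffices because only the implication $\theta_r(\xi)=0\Rightarrow\kappa_r(\xi)\circ e_r=0$ is needed.
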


\begin{proof}
Since $W_r$ is self-injective, relative Serre duality gives an isomorphism
\[
 \kappa_r:
 H^2(X_r,T_{X_r/W_r})
 \xrightarrow{\sim}
 \Hom_{W_r}\bigl(H^{d-2}(X_r,\Omega^1_{X_r/W_r}),W_r\bigr).
\]
Let
\[
 e_r:
 \mathbb H^{d-1}\bigl(X_r,\Fil^1\Omega^\bullet_{X_r/W_r}\bigr)
 \longrightarrow H^{d-2}(X_r,\Omega^1_{X_r/W_r})
\]
be induced by
$\Fil^1\Omega^\bullet_{X_r/W_r}\to\Omega^1_{X_r/W_r}[-1]$.
Its source is $H^{d-1}(K_r)$.

The projection $V_r\simeq W_r\oplus W_r[-d]\to W_r[-d]$
induces a map
\[
 \begin{aligned}
 \rho_r:H^1(D_r)
 &\longrightarrow
 H^1\bigl(\mathbf R\Hom_{W_r}(K_r,W_r[-d])\bigr)\\
 &\simeq \Hom_{W_r}\bigl(H^{d-1}(K_r),W_r\bigr),
 \end{aligned}
\]
where the last isomorphism again uses self-injectivity.
Formula \eqref{eq:relative-period-loop-map} gives
\[
 \rho_r\bigl(\theta_r(\xi)\bigr)
 =\kappa_r(\xi)\circ e_r
 \qquad
 \bigl(\xi\in H^2(X_r,T_{X_r/W_r})\bigr).
\]
Since $X_r$ extends to $W_{r+1}$, the hypothesis of
Proposition~\ref{tower:extension} makes $e_r$ surjective.
Thus $\theta_r(\xi)=0$ implies $\kappa_r(\xi)\circ e_r=0$,
hence $\kappa_r(\xi)=0$ and therefore $\xi=0$.
\end{proof}

\subsection{Integrating tangent directions and correcting obstructions}

For integers $r,j\geq1$, put
\[
 \mathcal A_{r,j}
 :=W_r\langle t\rangle/(\gamma_a(t):a>j),
 \qquad
 \epsilon_{r,j}:\mathcal A_{r,j}\longrightarrow W_r,
 \quad \gamma_a(t)\longmapsto0\quad(a\geq1),
\]
with its induced divided-power structure.
We use this notation for the rest of this section.

\begin{lemma}\label{tower:pd-curves}
Put $Y=X_r$, and suppose that $Y$ extends to $W_{r+1}$.
For every $u\in H^1(Y,T_{Y/W_r})$ and every finite $j\geq1$,
there exists a deformation $\mathcal Y_j/\mathcal A_{r,j}$ such that
\[
 \mathcal Y_j\otimes_{\mathcal A_{r,j}}W_r\simeq Y,
 \qquad
 \bigl[\mathcal Y_j\otimes_{\mathcal A_{r,j}}\mathcal A_{r,1}\bigr]=u.
\]
The second equality uses the usual identification
\[
 \pi_0\Def_{Y/W_r}(\mathcal A_{r,1})
 \simeq H^1(Y,T_{Y/W_r}),
 \qquad
 \mathcal A_{r,1}=W_r[t]/(t^2),
\]
determined by the generator $t$.
\end{lemma}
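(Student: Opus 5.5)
We argue by induction on $j$, extending along the PD elementary extensions $\mathcal A_{r,j+1}\twoheadrightarrow\mathcal A_{r,j}$ and using the relative obstruction comparison of Lemma~\ref{tower:relative-period}\eqref{tower:relative-period-obstruction} together with the injectivity of $\theta_r$ from Lemma~\ref{tower:injectivity}. For $j=1$, the first-order deformation with class $u$ exists by the usual identification $\pi_0\Def_{Y/W_r}(\mathcal A_{r,1})\simeq H^1(Y,T_{Y/W_r})$. Suppose $\mathcal Y_j$ over $\mathcal A_{r,j}$ has been constructed. The kernel of $\mathcal A_{r,j+1}\to\mathcal A_{r,j}$ is $I=W_r\gamma_{j+1}(t)$, which is finite free over $W_r$. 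We have $J_{\mathcal A_{r,j+1}}I=0$ because $\gamma_a(t)\gamma_{j+1}(t)$ is a multiple of $\gamma_{a+j+1}(t)=0$. Moreover $\gamma_m(\gamma_{j+1}(t))$ is a multiple of $\gamma_{m(j+1)}(t)=0$ for $m\geq2$. Hence the hypotheses of Lemma~\ref{tower:relative-period}\eqref{tower:relative-period-obstruction} hold. Since $I\simeq W_r$ and $\theta_r$ is injective, $\mathcal Y_j$ extends to $\mathcal A_{r,j+1}$ if and only if $\delta(\eta^{\mathrm{rel}}_{\mathcal Y_j})=0$ in $H^1(D_r\otimes^{\mathbf L}_{W_r}I)\cong H^1(D_r)$. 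The reduction to $\mathcal A_{r,1}$ is unchanged by such an extension, so it suffices to show that every relative period over $\mathcal A_{r,j}$ lifts to $\mathcal A_{r,j+1}$.

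The key point is that the period side is unobstructed along these PD curves, because the graph chart \eqref{eq:relative-graph-chart} is linear in $J_A$. Here $J_{\mathcal A_{r,j}}=\bigoplus_{a=1}^{j}W_r\gamma_a(t)$ is free, and $J_{\mathcal A_{r,j+1}}\to J_{\mathcal A_{r,j}}$ is the projection onto the first $j$ summands. As a map of $W_r$-modules this projection is split. Therefore $0\to I\to J_{\mathcal A_{r,j+1}}\to J_{\mathcal A_{r,j}}\to0$ is split, and after tensoring with $D_r$ the boundary $\delta$ vanishes identically. So $\delta(\eta^{\mathrm{rel}}_{\mathcal Y_j})=0$, and the obstruction comparison gives $H^1(\Theta_Y\otimes I)(\operatorname{ob})=0$. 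By the injectivity of $\theta_r$, which is available because $Y$ extends to $W_{r+1}$, we get $\operatorname{ob}(\mathcal Y_j,\mathcal A_{r,j+1}/\mathcal A_{r,j})=0$. Choosing an extension $\mathcal Y_{j+1}$ completes the induction. Its reduction to $W_r$ is still identified with $Y$, since the relative deformation functor carries that marking.

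The main obstacle is checking that the boundary in Lemma~\ref{tower:relative-period}\eqref{tower:relative-period-obstruction} is exactly the boundary of the underlying $W_r$-module sequence of augmentation ideals. One might worry that the graph chart uses the PD or ring structure of $J_A$ beyond its module structure. Equation \eqref{eq:relative-graph-chart} shows that it depends only on $J_A$ as a $W_r$-module via $D_r\otimes^{\mathbf L}_{W_r}J_A$, natural in $A$. Naturality in the map $\mathcal A_{r,j+1}\to\mathcal A_{r,j}$ then identifies the induced map with tensoring the projection, which is split surjective. I would verify this naturality carefully and then conclude as above.
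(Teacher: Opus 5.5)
Your proposal is correct and follows the paper's proof essentially step for step. Both argue by induction along $\mathcal A_{r,j+1}\twoheadrightarrow\mathcal A_{r,j}$, check the elementary-extension hypotheses, and use the $W_r$-splitting of $0\to I\to J_{\mathcal A_{r,j+1}}\to J_{\mathcal A_{r,j}}\to0$ to kill the period boundary. The relative obstruction comparison of Lemma~\ref{tower:relative-period} together with injectivity of $\theta_r$ then forces the obstruction to vanish. The ``main obstacle'' you flag needs no extra verification: the $\delta$ in Lemma~\ref{tower:relative-period} is by definition the boundary of that module sequence of augmentation ideals.
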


\begin{proof}
Choose the first-order deformation
$\mathcal Y_1/\mathcal A_{r,1}$ representing $u$.
We extend it inductively along
$\mathcal A_{r,j+1}\twoheadrightarrow\mathcal A_{r,j}$.
Write
\[
 J_j:=\ker(\epsilon_{r,j})
     =\bigoplus_{a=1}^{j}W_r\gamma_a(t).
\]
The kernel of $\mathcal A_{r,j+1}\to\mathcal A_{r,j}$ is
$
 I_{j+1}:=W_r\gamma_{j+1}(t)\simeq W_r,$
and we have
\[
 J_{j+1}I_{j+1}=0,
 \qquad
 \gamma_m(x)=0
 \quad(x\in I_{j+1},\ m\geq2).
\]
Moreover, the exact sequence
\[
 0\longrightarrow I_{j+1}
 \longrightarrow J_{j+1}
 \longrightarrow J_j
 \longrightarrow0
\]
splits as a sequence of $W_r$-modules, by sending
$\gamma_a(t)$ to $\gamma_a(t)$ for $1\leq a\leq j$.
Therefore,
\[
 \delta_j:
 H^0(D_r\otimes_{W_r}^{\mathbf L}J_j)
 \longrightarrow
 H^1(D_r\otimes_{W_r}^{\mathbf L}I_{j+1})
\]
is a zero-map, and every relative period point over $\mathcal A_{r,j}$
extends to $\mathcal A_{r,j+1}$.

Now suppose that $\mathcal Y_j$ has been constructed.
Let
$
 \eta_j^{\mathrm{rel}}
 \in H^0(D_r\otimes_{W_r}^{\mathbf L}J_j)
$
be its relative period class, and let
\[
 o_j:=
 \operatorname{ob}
 (\mathcal Y_j,\mathcal A_{r,j+1}/\mathcal A_{r,j})
 \in H^2(Y,T_{Y/W_r})\otimes_{W_r} I_{j+1}
\]
be its obstruction to extension.
Lemma~\ref{tower:relative-period} gives
\[
 (\theta_r\otimes_{W_r}\mathrm{id}_{I_{j+1}})(o_j)
 =\delta_j(\eta_j^{\mathrm{rel}})
 =0.
\]
The PD structures on the augmentation ideals $J_{j+1}$ and
$J_j$ are needed here to apply
Lemma~\ref{tower:relative-period}.
By Lemma~\ref{tower:injectivity}, $\theta_r$ is injective.
Since $I_{j+1}\simeq W_r$, it follows that $o_j=0$.
Thus there exists a deformation
$\mathcal Y_{j+1}/\mathcal A_{r,j+1}$ with
\[
 \mathcal Y_{j+1}
 \otimes_{\mathcal A_{r,j+1}}\mathcal A_{r,j}
 \simeq\mathcal Y_j,
\]
compatibly with the markings over $W_r$.
Induction preserves both the constant term $Y$ and the
first-order class $u$, proving the assertion.
\end{proof}

The use of divided powers in mixed-characteristic deformation theory
is central to Schr\"oer's $T^1$-lifting method
\cite[Sections~2--3]{Schroer03} and the tangent-lifting method of
Ekedahl--Shepherd-Barron \cite{ES05};
see also \cite[Section~7.3]{BT}.
In the following lemma, we use a finite divided-power curve to modify
a given $W_n$-lifting while preserving its reduction over $W_{n-r}$.

For a lifting $X_n/W_n$ of $X$, write
\[
 X_s:=X_n\otimes_{W_n}W_s,\qquad
 T_s:=T_{X_s/W_s}
 \quad(1\leq s\leq n).
\]
For $1\leq r<n$, let
\[
 \delta_r:H^1(X_r,T_r)\longrightarrow H^2(X,T_X)
\]
be the connecting homomorphism associated with
\refstepcounter{equation}
\[
 0\longrightarrow T_X
 \xrightarrow{\,p^r\,}T_{r+1}
 \longrightarrow T_r
 \longrightarrow0.
 \tag{\theequation}\label{eq:tower-tangent-coefficients}
\]
Let
\[
 o_n(X_n)\in H^2(X,T_X)
\]
denote the obstruction to extending $X_n$ to $W_{n+1}$.

\begin{lemma}\label{tower:correction}
Let $r\geq1$ and $n\geq r+2$, and let $X_n/W_n$ be a
lifting of $X$.
For every $u\in H^1(X_r,T_r)$, there exists a lifting
$X_n^u/W_n$ and an isomorphism
\[
 X_n^u\otimes_{W_n}W_{n-r}
 \xrightarrow{\sim}
 X_n\otimes_{W_n}W_{n-r}
\]
compatible with the markings on $X$, such that
\[
 o_n(X_n^u)-o_n(X_n)=\delta_r(u)
 \quad\text{in }H^2(X,T_X).
\]
\end{lemma}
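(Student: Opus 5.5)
The plan is to realize the correction by a finite divided-power curve through $X_n$, and then to specialize that curve at $t=p^{\,n-r}$. Since $n\geq r+2$, the reduction $X_r=X_n\otimes_{W_n}W_r$ extends to $W_{r+1}$, namely to $X_{r+1}$. Lemma~\ref{tower:pd-curves} therefore applies to $Y=X_r$. For each $j$ it gives a deformation $\mathcal Y_j/\mathcal A_{r,j}$ with constant term $X_r$ and first-order class $u$. I would fix $j$ large, say $j\geq n$. The point of this choice is that every PD monomial $\gamma_a(p^{\,n-r})=p^{a(n-r)}/a!$ with $a>j$ vanishes in $W_n$.

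First, I would spread this curve out over the ring $\mathcal B:=W_n\langle t\rangle/(\gamma_a(t):a>j)$ so that it deforms $X_n$ rather than $X_r$. Let $\mathcal B_r$ denote its reduction to $\mathcal A_{r,j}$. I would work with the relative functors $\Def_{X_n/W_n}$ and $\Per_{X_n/W_n}$ on $\mathcal B$, and build a deformation $\mathcal X/\mathcal B$ by induction along the PD elementary extensions $\mathcal B_{j'+1}\to\mathcal B_{j'}$ in the $t$-degree. At each step I would require that the class of $\mathcal X$ over the first-order quotient $W_n[t]/(t^2)$ reduces to $u$ over $W_r$. Here $u$ is lifted to some class in $H^1(X_n,T_n)$. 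This lift is the first difficulty, and I would handle it by instead working over the coefficient ring $W_n\otimes$ of the form $\mathcal A_{r,j}$, pulled back along $t\mapsto p^{\,n-r}t'$. That substitution divides the coefficients, so only the reduction to $W_r$ of the $t$-coefficients matters. The extension steps are unobstructed for the same reason as in the proof of Lemma~\ref{tower:pd-curves}: the boundary maps split in the $\gamma_a(t)$-basis, and the obstruction comparison of Lemma~\ref{tower:relative-period} holds.

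Next, I would evaluate the curve along the $W$-algebra map $\mathrm{ev}:\mathcal B\to W_n$, $t\mapsto p^{\,n-r}$. This map respects divided powers because $\gamma_a(p^{\,n-r})\in p^{\,n-r}W$ for $p>2$, and it is well defined by the choice of $j$. Set $X_n^u:=\mathcal X\otimes_{\mathrm{ev}}W_n$. Reducing modulo $p^{\,n-r}$ sends $t$ to $0$, so the reduction of $X_n^u$ to $W_{n-r}$ is the constant term $X_n\otimes W_{n-r}$, with its marking. For the obstruction, I would compare periods through \eqref{eqn:obstruction_period} and \eqref{eq:relative-theta-coefficients}. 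Since $\theta_1$ is injective, it suffices to show $\delta_{W_{n+1}/W_n}(\eta_n^u)-\delta_{W_{n+1}/W_n}(\eta_n)=\theta_1(\delta_r(u))$. By naturality, the difference of the periods is the evaluation of the relative period of $\mathcal X$ at $t=p^{\,n-r}$. In that expansion, only the linear term $p^{\,n-r}\,\theta(u)$ contributes to the boundary into $p^nW_{n+1}\simeq k$. The higher terms $\gamma_a(p^{\,n-r})\,c_a$ with $a\geq2$ lie in $p^{\,n-r+1}$ times classes that lift integrally, and so should contribute no boundary. The linear term gives the boundary of $p^{\,n-r}\theta_r(u)$ along multiplication by $p^r$. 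By the coefficient square \eqref{eq:relative-theta-coefficients} and the connecting map of \eqref{eq:tower-tangent-coefficients}, this boundary equals $\theta_1(\delta_r(u))$.

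I expect the main obstacle to be the last step: verifying that the higher PD terms contribute no boundary, and identifying the linear term with $\theta_1\delta_r(u)$ with the correct sign and coefficient. Put differently, the difficulty is computing the connecting homomorphism of $0\to k\xrightarrow{p^n}pW_{n+1}\to pW_n\to0$ on an evaluated period. I would approach this by writing the relative period of $\mathcal X$ in the $\gamma_a(t)$-basis, as $\sum_a\gamma_a(t)\,c_a$ with $c_a\in H^0(D_r)$. The crucial input would be that the extension $\mathcal Y_{j+1}$ of Lemma~\ref{tower:pd-curves} exists at every stage, so that each $c_a$ lifts compatibly from $D_r$ to $D_{r+1}$. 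This lifting would make the boundary of each higher term vanish, leaving only the linear term.
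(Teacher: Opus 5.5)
The construction of $X_n^u$ has a genuine gap. You want a deformation $\mathcal X$ of $X_n$ over $\mathcal B=W_n\langle t\rangle/(\gamma_a(t):a>j)$ whose first-order class reduces to $u$.

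\begin{itemize}
\item \emph{Such a family only exists in the trivial case.} Restricting $\mathcal X$ to $W_n[t]/(t^2)$ would give a class in $H^1(X_n,T_n)$ mapping to $u$. Then $u$ would come from $H^1(X_{r+1},T_{r+1})$, so $\delta_r(u)=0$, and $X_n^u=X_n$ already works. In the application, however, $\delta_r(u)=-o_n(X_n)$, which is nonzero whenever a correction is actually needed.
\item \emph{The unobstructedness argument is not available over $X_n$.} The proof of Lemma~\ref{tower:pd-curves} needs injectivity of $\theta_n$. Lemma~\ref{tower:injectivity} supplies this only when $X_n$ extends to $W_{n+1}$, which is precisely what is in doubt.
\item \emph{The proposed repair does not fix either point.} Substituting $t\mapsto p^{n-r}t'$ is a pullback, so it still starts from a family over a ring whose $t$-coefficients lie in $W_n$.
\end{itemize}

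The missing idea is that you never need to deform $X_n$. Keep the curve $\mathcal Y/\mathcal A_{r,n}$ of Lemma~\ref{tower:pd-curves}; it deforms $X_r$, and $X_r$ does extend to $W_{r+1}$. Glue $\mathcal Y$ to $X_n$ along $X_r$ over $W_n\times_{W_r}\mathcal A_{r,n}$; this is legitimate because $\mathcal A_{r,n}\to W_r$ is surjective. Then push forward along the PD homomorphism
\[
 W_n\times_{W_r}\mathcal A_{r,n}\longrightarrow W_n,\qquad
 \Bigl(x,\ \bar x+\sum_{a=1}^{n}\bar c_a\gamma_a(t)\Bigr)\longmapsto
 x+\sum_{a=1}^{n}c_a\gamma_a(p^{n-r}).
\]
This map is well defined because $p^r\gamma_a(p^{n-r})\in p^nW$ for $a\geq1$ and $\gamma_a(p^{n-r})\in p^nW$ for $a>n$. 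It is the rigorous form of your remark that the substitution only sees the $t$-coefficients modulo $p^r$. It is also what the paper does in coordinates on a miniversal hull: $\varphi_n$ and $\beta$ agree over $W_r$, and one evaluates at $a'=x(p^{n-r})$.

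With $X_n^u$ built this way, computing the change of obstruction through periods and the injectivity of $\theta_1$ is a reasonable alternative to the paper's route. The paper instead compares $q_{a'}-q_a$ in $\Ext^1_R(L,k)$ and uses Kodaira--Spencer compatibility with obstructions.

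However, your reason for discarding the terms with $a\geq2$ is wrong. The existence of $\mathcal Y_{j+1}$ is a statement over $W_r$ and gives no lift of $c_a$ to $D_{r+1}$. For $a=1$, such a lift would even force $\theta_1\delta_r(u)=0$. What actually kills those terms is the estimate $r+a(n-r)-v_p(a!)\geq n+1$ for $a\geq2$. This uses $n-r\geq2$, and it makes the map $W_r\to pW_n$, $1\mapsto\gamma_a(p^{n-r})$, factor through $pW_{n+1}$.

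The linear term then gives the boundary of $0\to k\to W_{r+1}\to W_r\to0$ applied to $H^0(\Theta_{X_r})(u)$. This holds up to a sign coming from the shift in $\RG(X_r,T_r)[1]$, which you still need to track.
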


\begin{proof}
Fix $u\in H^1(X_r,T_r)$.
Since $r+1\leq n$, the lifting $X_r/W_r$ extends to
\[
 X_{r+1}=X_n\otimes_{W_n}W_{r+1}.
\]
Applying Lemma~\ref{tower:pd-curves} with $j=n$, choose a
deformation $\mathcal Y/\mathcal A_{r,n}$ equipped with a
marking
\[
 \rho:\mathcal Y\otimes_{\mathcal A_{r,n}}W_r
 \xrightarrow{\sim}X_r
\]
such that
\[
 \bigl[\mathcal Y\otimes_{\mathcal A_{r,n}}\mathcal A_{r,1}\bigr]=u
 \quad\text{under}\quad
 \pi_0\Def_{X_r/W_r}(\mathcal A_{r,1})
 \simeq H^1(X_r,T_r).
\]
Here the first-order restriction carries the marking
induced by $\rho$.

Choose a miniversal formal deformation $\mathfrak X/R$ of $X$,
where $R$ is a complete Noetherian local $W$-algebra with
residue field $k$; see \cite[Tags~0ET5 and 06IL]{Stacks}.
Choose a presentation
$
 R=\Lambda/J$, $\Lambda:=W[[x_1,\ldots,x_m]].
$
By formal versality, choose
\[
 \varphi_n:R\longrightarrow W_n,\qquad
 \sigma_n:\varphi_n^*\mathfrak X\xrightarrow{\sim}X_n,
\]
and write $\varphi_s$ and $\sigma_s$ for their reductions
over $W_s$. Choose $a_i\in pW$ lifting $\varphi_n(x_i)$
and put $a=(a_1,\ldots,a_m)$. Then
\[
 f(a)\in p^nW\qquad(f\in J).
\]

Apply formal versality to
$\epsilon_{r,n}:\mathcal A_{r,n}\twoheadrightarrow W_r$ and the
marking $\rho$. This gives
\[
 \beta:R\longrightarrow\mathcal A_{r,n},\qquad
 \iota:\beta^*\mathfrak X\xrightarrow{\sim}\mathcal Y
\]
such that
\[
 \epsilon_{r,n}\circ\beta=\varphi_r,\qquad
 \rho\circ(\iota\otimes_{\mathcal A_{r,n}}W_r)=\sigma_r.
\]
Consequently, we can write
\[
 \bigl(\beta(x_i)\bigr)_{i=1}^{m}
 =
 (a\bmod p^r)+\sum_{j=1}^{n}\bar b_j\gamma_j(t),
 \qquad \bar b_j\in W_r^m.
\]
Choose lifts $b_j\in W^m$ of $\bar b_j$ and set
\[
 x(t):=a+\sum_{j=1}^{n}b_j\gamma_j(t),
 \qquad
 \gamma_j(t)=\frac{t^j}{j!}
 \quad\text{in }W[1/p][t].
\]
For $f\in J$, its divided-power expansion has the form
\[
 f(x(t))=f(a)+\sum_{j\geq1}c_j\gamma_j(t),
 \qquad c_j\in W.
\]
Since $\beta(f)=0$, comparison of coefficients in
$\mathcal A_{r,n}$ gives
\[
 c_j\in p^rW\quad(1\leq j\leq n),
 \qquad c_1=Df(a)b_1,
\]
where
\[
 Df(a):=
 \left(
 \frac{\partial f}{\partial x_1}(a),\ldots,
 \frac{\partial f}{\partial x_m}(a)
 \right)\in W^{1\times m},
\]
and $b_1$ is regarded as a column vector.

Put $s=n-r\geq2$. Using $v_p(j!)\leq j-1$, we obtain
\[
 \begin{aligned}
 r+sj-v_p(j!)&\geq n+1
 &&(2\leq j\leq n),\\
 sj-v_p(j!)&\geq j+1\geq n+2
 &&(j>n).
 \end{aligned}
\]
Thus
\[
 a':=x(p^s)
     =a+\sum_{j=1}^{n}b_j\frac{p^{sj}}{j!}
     \in(pW)^m
\]
satisfies
\refstepcounter{equation}
\[
 a'\equiv a\pmod{p^s},\qquad
 f(a')\equiv f(a)+p^sDf(a)b_1\pmod{p^{n+1}}
 \quad(f\in J).
  \tag{\theequation}\label{eq:pd-evaluation-congruences}
\]
In particular, $f(a')\in p^nW$ for every $f\in J$.
Hence the assignments
\[
 \varphi_n^u:R\longrightarrow W_n,\qquad
 x_i\longmapsto a_i'\bmod p^n
\]
define a local $W$-algebra homomorphism. Set
$
 X_n^u:=(\varphi_n^u)^*\mathfrak X.$
The congruence $a'\equiv a\pmod{p^s}$ gives
\[
 X_n^u\otimes_{W_n}W_s
 \simeq X_n\otimes_{W_n}W_s
\]
compatibly with the markings on $X$.

It remains to compute the change in the obstruction.
The first-order part of $\beta$ is
\[
 \beta|_{\mathcal A_{r,1}}=\varphi_r+t d_u,
 \qquad
 d_u\in\operatorname{Der}^{\mathrm{cont}}_W(R,W_r),
 \qquad
 d_u(x_i)=b_{1,i}\bmod p^r.
\]
Let
\[
 \mathrm{KS}:
 \operatorname{Der}^{\mathrm{cont}}_W(R,W_r)
 \longrightarrow H^1(X_r,T_r)
\]
denote the Kodaira--Spencer map of $\mathfrak X$ at $\varphi_r$,
where $W_r$ is regarded as an $R$-module via $\varphi_r$.
Explicitly, $\mathrm{KS}(d)$ is the class of the first-order
deformation
\[
 \mathfrak X\otimes_{R,\varphi_r+td}\mathcal A_{r,1},
 \qquad \mathcal A_{r,1}=W_r[t]/(t^2),
\]
with marking $\sigma_r$ on reduction modulo $t$.
Hence
\[
 \mathrm{KS}(d_u)
 =
 [\mathcal Y\otimes_{\mathcal A_{r,n}}\mathcal A_{r,1}]
 =u,
\]
where the last equality holds by the choice of $\mathcal Y$.

Let $\mathfrak m_R$ be the maximal ideal of $R$, and let
\[
 L:=\widehat L_{R/W}
 :=
 \mathbf R\varprojlim_{\nu\geq1}
 \left(
 L_{R/W}\otimes_R^{\mathbf L}R/\mathfrak m_R^\nu
 \right)
\]
be the derived $\mathfrak m_R$-adic completion of the cotangent
complex; see \cite[Tags~08PN and 0922]{Stacks}.
Since $\Lambda=W[[x_1,\ldots,x_m]]$ has completed cotangent
complex $\bigoplus_{i=1}^m\Lambda\,dx_i$ in degree zero,
the transitivity triangle and the conormal description give
\refstepcounter{equation}
\[
 \begin{gathered}
 \tau_{\geq-1}L
 \simeq
 \left[
 J/J^2\xrightarrow{\,d\,}
 \bigoplus_{i=1}^m R\,dx_i
 \right],\\
 d(f\bmod J^2)
 =
 \sum_{i=1}^m
 \left(\frac{\partial f}{\partial x_i}\bmod J\right)dx_i,
 \end{gathered}
 \tag{\theequation}\label{eq:pd-cotangent-presentation}
\]
with terms in degrees $-1$ and $0$;
see \cite[Tags~08QX and 08RA]{Stacks}.
In particular,
\[
 \operatorname{Ext}^1_R(L,k)
 \simeq
 \operatorname{coker}\left(
 \operatorname{Hom}_R\left(\bigoplus_{i=1}^m R\,dx_i,k\right)
 \xrightarrow{\,d^\vee\,}
 \operatorname{Hom}_R(J/J^2,k)
 \right),
 \qquad d^\vee(v)=v\circ d.
\]

Recall that $\varphi_n$ and $\varphi_n^u$ are induced by
evaluation at $a$ and $a'$ modulo $p^n$, respectively.
Evaluation at these coordinates gives lifts to
$\Lambda\to W_{n+1}$ of the composites
\[
 \Lambda\longrightarrow R\xrightarrow{\varphi_n}W_n,
 \qquad
 \Lambda\longrightarrow R\xrightarrow{\varphi_n^u}W_n,
\]
respectively.

For $z=a,a'$, we have $f(z)\in p^nW$ for every $f\in J$.
Thus evaluation on $J$, followed by the identification
$p^nW/p^{n+1}W\simeq k$, defines an $R$-linear map
\[
 q_z:J/J^2\longrightarrow k,
 \qquad
 f\bmod J^2\longmapsto\frac{f(z)}{p^n}\bmod p.
\]
This factors through $J/J^2$ because evaluation sends $J^2$
into $p^{2n}W\subseteq p^{n+1}W$.
Since $k$ is derived $\mathfrak m_R$-complete, derived
completion induces an isomorphism
\[
 \operatorname{Ext}^1_R(L,k)
 \simeq
 \operatorname{Ext}^1_R(L_{R/W},k);
\]
see \cite[Tag~0922]{Stacks}.

The construction in the proof of \cite[Tag~08S5]{Stacks}
identifies
\[
 [q_a],[q_{a'}]\in\operatorname{Ext}^1_R(L,k)
\]
with the obstruction classes for lifting $\varphi_n$ and
$\varphi_n^u$, respectively, to $W_{n+1}$.

Since $Df(a)b_1=c_1\in p^rW$, dividing the second congruence
in \eqref{eq:pd-evaluation-congruences} by $p^n=p^{r+s}$
gives
\[
 \begin{aligned}
 (q_{a'}-q_a)(f\bmod J^2)
 &=
 \frac{f(a')-f(a)}{p^n}\bmod p\\
 &=
 \frac{Df(a)b_1}{p^r}\bmod p.
 \end{aligned}
\]
Consequently,
\[
 q_{a'}-q_a=\lambda
 \quad\text{in }\operatorname{Hom}_R(J/J^2,k),
 \qquad
 \lambda(f\bmod J^2)
 :=
 \frac{Df(a)b_1}{p^r}\bmod p,
\]
and hence
\[
 [q_{a'}]-[q_a]=[\lambda]
 \quad\text{in }\operatorname{Ext}^1_R(L,k).
\]

Let
\[
 \kappa:\operatorname{Ext}^1_R(L,k)
 \longrightarrow H^2(X,T_X)
\]
be the obstruction map induced by $\mathfrak X$.
Compatibility of the Kodaira--Spencer morphism with
obstruction classes gives
\[
 \kappa([q_a])=o_n(X_n),\qquad
 \kappa([q_{a'}])=o_n(X_n^u);
\]
see \cite[Tags~08S3 and 07YJ]{Stacks}.

The exact sequence of $R$-modules (via $\varphi_{r+1}$)
\[
 0\longrightarrow k\xrightarrow{\,p^r\,}W_{r+1}
 \longrightarrow W_r\longrightarrow0
\]
induces a connecting homomorphism
\[
 \partial_L:
 \operatorname{Der}^{\mathrm{cont}}_W(R,W_r)
 =
 H^0\!\left(\mathbf R\operatorname{Hom}_R(L,W_r)\right)
 \longrightarrow
 \operatorname{Ext}^1_R(L,k).
\]

We first compute $\partial_L(d_u)$.
Recall that $d_u(x_i\bmod J)=b_{1,i}\bmod p^r$.
Under the presentation \eqref{eq:pd-cotangent-presentation},
the derivation $d_u$ is represented by
\[
 h:\bigoplus_iR\,dx_i\longrightarrow W_r,
 \qquad dx_i\longmapsto b_{1,i}\bmod p^r.
\]
Lift $h$ to the $R$-linear map
\[
 \widetilde h:\bigoplus_iR\,dx_i\longrightarrow W_{r+1},
 \qquad dx_i\longmapsto b_{1,i}\bmod p^{r+1}.
\]
For every $f\in J$, we have
\[
 (\widetilde h\circ d)(f\bmod J^2)
 =
 Df(a)b_1\bmod p^{r+1}.
\]
The differential in the Hom complex sends the degree-zero
map $\widetilde h$ to $-\widetilde h\circ d$.
Consequently, identifying $p^rW_{r+1}$ with $k$, we obtain
\[
 \partial_L(d_u)
 =
 \left[
 f\bmod J^2\longmapsto
 -\frac{Df(a)b_1}{p^r}\bmod p
 \right]
 =
 -[\lambda].
\]

The Kodaira--Spencer morphism of $\mathfrak X$,
pulled back along $\varphi_{r+1}$, gives a morphism
\[
 \mathbf R\operatorname{Hom}_R(L,M)
 \longrightarrow
 \mathbf R\Gamma\!\left(
 X_{r+1},
 T_{r+1}\otimes_{W_{r+1}}^{\mathbf L}M
 \right)[1]
\]
natural in the $W_{r+1}$-module $M$.
For $M=W_r$, its map on $H^0$ is $\mathrm{KS}$;
for $M=k$, its map on $H^1$ is $\kappa$.
Compatibility with the connecting homomorphisms therefore
gives the commutative diagram
\[
 \begin{array}{ccc}
 \operatorname{Der}^{\mathrm{cont}}_W(R,W_r)
 &\xrightarrow{\ \partial_L\ }&
 \operatorname{Ext}^1_R(L,k)
 \\
 {\scriptstyle\mathrm{KS}}\downarrow
 &&
 \downarrow{\scriptstyle\kappa}
 \\
 H^1(X_r,T_r)
 &\xrightarrow{\ -\delta_r\ }&
 H^2(X,T_X).
 \end{array}
\]
Here the lower arrow is $-\delta_r$ because the target
complex is shifted by $[1]$, which changes the sign
of its differential and hence of its connecting homomorphism.
Applying this diagram to $d_u$, we find
$
 \kappa(-[\lambda])
 =
 -\delta_r(\mathrm{KS}(d_u)).
$
Since $\mathrm{KS}(d_u)=u$, it follows that
$
 \kappa([\lambda])=\delta_r(u).
$

Therefore,
\[
 \begin{aligned}
 o_n(X_n^u)-o_n(X_n)
 &=\kappa([q_{a'}]-[q_a])\\
 &=\kappa([\lambda])\\
 &=\delta_r(u).
 \end{aligned}
\]
\end{proof}

\subsection{Arbitrary length and stabilization}

\begin{proof}[Proof of Proposition~\ref{tower:extension}]
Since $X$ admits the lifting $X_2^0/W_2$,
Lemma~\ref{tower:injectivity} with $r=1$ makes
$\theta_1$ injective.
Together with $a_2=0$, the argument of
Proposition~\ref{common:w3} therefore gives a lifting
$X_3/W_3$ of $X_2^0$.

Since $D$ is perfect over $W$, the module $H^1(D)$ is
finitely generated. Choose an integer $E\geq1$ such that
\[
 p^E H^1(D)_{\mathrm{tors}}=0.
\]
For $m\geq1$, let
\[
 \partial_m:H^0(D_m)\longrightarrow H^1(D)
\]
be the connecting homomorphism obtained by tensoring
\[
 0\longrightarrow W\xrightarrow{\,p^m\,}W
 \longrightarrow W_m\longrightarrow0
\]
with $D$, and write
\[
 \operatorname{red}_m:H^1(D)\longrightarrow H^1(D_m)
\]
for the natural reduction map.

Let $n\geq3$, and suppose that a lifting $X_n/W_n$
reducing to $X_2^0$ has been chosen. Put
\[
 r=\min\{n-2,E\}.
\]
For this step, write $X_s:=X_n|_{W_s}$ and
$T_s:=T_{X_s/W_s}$ for $1\leq s\leq n$.
Let $a_n\in H^0(D_n)$ be the graph class of $X_n$.

By \eqref{eqn:obstruction_period},
\eqref{eq:common-period-coordinate}, and naturality of
connecting homomorphisms, as in the proof of
Proposition~\ref{common:w3}, we have
\refstepcounter{equation}
\[
 \theta_1(o_n(X_n))
 =
 \operatorname{red}_1\bigl(\partial_n(a_n)\bigr).
 \tag{\theequation}\label{eq:tower-period-obstruction}
\]
Since $X_n|_{W_2}\simeq X_2^0$, the class $a_n$ reduces
to $a_2=0$.
Consider the following commutative
diagram with exact rows.
\[
 \begin{tikzcd}[column sep=small]
 0 \arrow[r] &
 W \arrow[r,"p^n"] \arrow[d,"p^{n-2}"'] &
 W \arrow[r] \arrow[d,equal] &
 W_n \arrow[r] \arrow[d] & 0\\
 0 \arrow[r] &
 W \arrow[r,"p^2"] &
 W \arrow[r] &
 W_2 \arrow[r] & 0.
 \end{tikzcd}
\]
After taking derived tensor products with $D$,
naturality of the connecting homomorphisms gives
\[
 p^{n-2}\partial_n(a_n)
 =
 \partial_2(a_n|_{W_2})
 =
 \partial_2(a_2)
 =
 0.
\]
Thus $\partial_n(a_n)$ is torsion and is also annihilated
by $p^E$. Since $r=\min\{n-2,E\}$, we obtain
\[
 p^r\partial_n(a_n)=0.
\]

Write $p^r$ for the map
\[
 H^2(X,T_X)\longrightarrow H^2(X_{r+1},T_{r+1})
\]
induced by the coefficient map
$k\to W_{r+1}$, $1\mapsto p^r$.
The composite $W\to k\to W_{r+1}$ sends $x$ to $p^r x$.
Consequently, \eqref{eq:tower-period-obstruction} and
\eqref{eq:relative-theta-coefficients} give
\[
 \theta_{r+1}\bigl(p^r o_n(X_n)\bigr)
 =
 \operatorname{red}_{r+1}
 \bigl(p^r\partial_n(a_n)\bigr)
 =0.
\]
Since $r+2\leq n$, the lifting $X_{r+1}$ extends to
$X_n|_{W_{r+2}}$.
Lemma~\ref{tower:injectivity} therefore makes
$\theta_{r+1}$ injective.
The cohomology sequence of
\eqref{eq:tower-tangent-coefficients} now gives
\[
 o_n(X_n)\in
 \ker\!\left(
 H^2(X,T_X)\xrightarrow{\,p^r\,}
 H^2(X_{r+1},T_{r+1})
 \right)
 =
 \operatorname{im}\delta_r.
\]

Choose $u\in H^1(X_r,T_r)$ with
$\delta_r(u)=-o_n(X_n)$.
Applying Lemma~\ref{tower:correction} gives a lifting
$\widetilde X_n/W_n$ such that
\[
 \widetilde X_n|_{W_{n-r}}\simeq X_n|_{W_{n-r}},
 \qquad
 o_n(\widetilde X_n)
 =
 o_n(X_n)+\delta_r(u)
 =
 0.
\]
Choose an extension $X_{n+1}/W_{n+1}$ of
$\widetilde X_n$, together with an identification
$X_{n+1}|_{W_n}\simeq\widetilde X_n$.
The chosen identifications give
\refstepcounter{equation}
\[
 \begin{gathered}
 X_{n+1}|_{W_{n-r}}
 \simeq
 \widetilde X_n|_{W_{n-r}}
 \simeq
 X_n|_{W_{n-r}},\\
 n-r=\max\{2,n-E\}.
 \end{gathered}
 \tag{\theequation}\label{eq:tower-preserved-reduction}
\]
In particular, $X_{n+1}|_{W_2}\simeq X_2^0$.
Starting with $X_3$, we perform this construction
successively for all $n\geq3$.

For the resulting sequence of liftings, set
\[
 Y_1=X,\qquad
 Y_m=X_{m+E}|_{W_m}\quad(m\geq2).
\]
At stage $n=m+E$, with $m\geq2$, we have
$r=E$ and $n-r=m$.
Thus \eqref{eq:tower-preserved-reduction} gives
\[
 \begin{aligned}
 Y_{m+1}|_{W_m}
 &=X_{m+E+1}|_{W_m}\\
 &\simeq\widetilde X_{m+E}|_{W_m}\\
 &\simeq X_{m+E}|_{W_m}\\
 &=Y_m.
 \end{aligned}
\]
Together with the special-fibre identifications, these
chosen isomorphisms and their composites make $(Y_m)$
a compatible tower, with $Y_2\simeq X_2^0$.

Lemma~\ref{common:algebraization} gives a smooth projective
lifting $\mathcal X/W$ inducing the compatible tower $(Y_m)$ as desired.
\end{proof}

\section{The first Hodge edge over finite Witt rings}\label{sec:edge}

Throughout this section, $X$ is a quasi-$F$-split strict
Calabi--Yau variety of dimension $d\geq3$.
For a smooth proper lift $Y/W_{n+1}$,
write $Y_n=Y\times_{W_{n+1}}W_n$. We prove that the map
\[
 H^{d-1}\bigl(Y_n,\Fil^1\Omega^\bullet_{Y_n/W_n}\bigr)
 \longrightarrow H^{d-2}(Y_n,\Omega^1_{Y_n/W_n})
\]
is surjective if $d\leq p+1$, or if $d=p+2$ and $H^0(X,T_X)=0$.
The lifting $Y/W_{n+1}$ is arbitrary; its reduction to
$W_2$ need not be the lifting constructed in
Lemma~\ref{common:canonical}.
The quasi-$F$-splitting hypothesis is imposed only on
the special fibre $X$.
Note that if $d<p$, the required surjectivity holds
even without assuming that $X$ is quasi-$F$-split.
Indeed, since $Y_n$ lifts to $W_{n+1}$, the
Hodge--de Rham spectral sequence of $Y_n/W_n$
degenerates at $E_1$
\cite[Theorem~8.2.6]{Ogus94}.

\subsection{A finite comparison}

Following \cite[Section~3.1]{TVX}, let $B$ be the $p$-adic
completion of the $p$-torsion-free quotient of the divided-power
envelope of $(v_+)$ in $A$.
Equivalently, $B$ is the $p$-adic completion of the graded
$W$-subalgebra of $A[1/p]$ generated by $v_-$ and
$v_+^m/m!$ for $m\geq1$.
Set
\[
 g(0)=0,\qquad
 g(s)=\inf_{m\geq s}\{m-v_p(m!)\}\quad(s\geq1).
\]
The homogeneous components of $B$ are
\begin{equation}
\label{eqn:Bdegree}
 B^s=W\frac{p^{g(s)}v_+^s}{p^s}\quad(s\geq0),
 \qquad B^{-s}=Wv_-^s\quad(s\geq0).
\end{equation}
In particular, $A^s\to B^s$ is an isomorphism for every $s<p$
\cite[Lemma~3.2 and Corollary~3.3]{TVX}.
Give $W_n[v_-]$ the grading with $W_n$ in degree $0$ and
$\deg(v_-)=-1$.
Let $\mathcal H_{Y_n}$ denote the Rees complex of
$(\RG_{\mathrm{dR}}(Y_n/W_n),\Fil^\bullet)$
over $W_n[v_-]$.

The following lemma is a finite-level Rees formulation of
Ogus's tame-gauge comparison
\cite[Proposition~4.5.3, Lemma~5.3.11, and
Theorem~7.3.1]{Ogus94}; see also the proof of
\cite[Theorem~8.2.5]{Ogus94}.
For smooth formal liftings over $W$, the comparison after
base change to $B$ is expressed in
\cite[Theorem~1 and Remark~4.4]{TVX}.
We record the finite-level formulation needed here,
which requires only a lifting over $W_{n+1}$.

Recall that
\[
 \mathcal N_X
 =\bigoplus_{i\in\mathbb Z}
   \mathbf R\Gamma(X,N^iW\Omega_X^\bullet),
\]
with the $i$-th summand in grading degree $i$.
Its structure as a graded
$A=W[v_-,v_+]/(v_-v_+-p)$-complex is induced by
\[
 v_-:N^iW\Omega_X^\bullet
       \hookrightarrow N^{i-1}W\Omega_X^\bullet,
 \qquad
 v_+:N^iW\Omega_X^\bullet
       \xrightarrow{\,p\,}N^{i+1}W\Omega_X^\bullet,
\]
where $\deg(v_-)=-1$ and $\deg(v_+)=1$.

\begin{lemma}\label{edge:comparison}
There is a natural isomorphism in the derived category of
graded $B/p^nB$-modules:
\[
 \bigl(\mathcal N_X\otimes_A^{\mathbf L} B\bigr)
       \otimes_W^{\mathbf L}W_n
 \simeq
 \mathcal H_{Y_n}\otimes_{W_n[v_-]}^{\mathbf L}(B/p^nB).
\]
\end{lemma}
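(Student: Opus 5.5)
We prove the comparison by reducing both sides to a single cochain-level computation on an affine cover of a lifting. The relevant lifting is the formal scheme obtained by extending $Y$ to a smooth $p$-adic formal lift. Such a lift need not exist globally. We therefore work locally and glue.

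Choose an affine open cover $\{U_\alpha\}$ of $X$. Each $U_\alpha\cap Y$ lifts to a smooth formal affine $\widetilde U_\alpha/W$ extending $Y|_{U_\alpha}$ over $W_{n+1}$. On each $\widetilde U_\alpha$ pick a Frobenius lift $\phi_\alpha$ whose reduction modulo $p^{n+1}$ is compatible with the given $W_{n+1}$-structure. Such Frobenius lifts exist by smoothness. On such a chart, Illusie's comparison gives a quasi-isomorphism $\Omega^\bullet_{\widetilde U_\alpha}\to W\Omega^\bullet_{U_\alpha}$. This map carries the $p$-adic filtration $p^{\max(i-j,0)}\Omega^j$ (the ``saturated Hodge'' or Nygaard-type filtration, in Ogus's language of tame gauges) to the Nygaard filtration $N^i$. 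By \cite[Proposition~4.5.3]{Ogus94} and the local form of \cite[Theorem~7.3.1]{Ogus94}, this is a filtered quasi-isomorphism.

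Next we identify the Rees complex of this $p$-adically saturated filtration after base change $A\to B$. Passing from the gauge $\bigoplus_i\sum_j p^{\max(i-j,0)}\Omega^j$ to its tensor with $B$ adjoins the divided powers $v_+^m/m!$. The image is then generated by $\Fil^i_{\mathrm H}\Omega^\bullet\otimes B$, because $v_+^{m}/m!$ acting on $\Omega^{j}$ with $m=i-j$ absorbs the $p$-power. This is exactly the mechanism of \cite[Lemma~5.3.11]{Ogus94} and \cite[Theorem~1]{TVX}. It yields a graded isomorphism
\[
 \mathcal N_{\widetilde U_\alpha}\otimes_A^{\mathbf L}B\simeq \mathcal H_{\widetilde U_\alpha}\otimes_{W[v_-]}^{\mathbf L}B .
\]
We then reduce modulo $p^n$. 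The left side depends only on $U_\alpha$. The right side reduces to $\mathcal H_{Y_n|U_\alpha}\otimes B/p^n$, since the de Rham complex with its Hodge filtration commutes with base change.

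The hard step is to see that the local isomorphism modulo $p^n$ does not depend on the choice of $\widetilde U_\alpha$ and $\phi_\alpha$ beyond its reduction to $W_{n+1}$, so that it glues. Changing the Frobenius lift changes the comparison map by a homotopy built from divided-power Taylor expansions $\sum \gamma_m(\phi-\phi')\,\partial^m$. In the $B$-linear setting the $p^{-m}$ denominators are exactly compensated by the elements $p^{g(m)}v_+^m/p^m\in B^m$ of \eqref{eqn:Bdegree}. Modulo $p^n$, the terms involving data beyond $W_{n+1}$ vanish: two lifts agreeing modulo $p^{n+1}$ differ by $p^{n+1}$, and $\phi$ contributes a further factor of $p$. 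This is the finite-level bookkeeping in the proof of \cite[Theorem~8.2.5]{Ogus94}, and it is where only a $W_{n+1}$-lifting is required. I would try first to verify it via the standard Čech–Alexander (PD-envelope of the diagonal) argument. That argument replaces choices by the products $\widetilde U_\alpha\times\widetilde U_\beta$ and makes everything functorial, so the local isomorphisms assemble on the Čech nerve. Taking derived global sections then gives the stated natural isomorphism of graded $B/p^nB$-complexes.
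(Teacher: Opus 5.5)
\textbf{Verdict: genuine gap.} Your chart-level step is correct, but it carries none of the content of the lemma, and the gluing step that carries all of it is left as a plan.

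On a chart with a formal lift $\widetilde U$, the stupid filtration is termwise split. So the Rees complex of $p^{\max(i-q,0)}\Omega^q_{\widetilde U}$ is literally $\mathcal H_{\widetilde U}\otimes_{W[v_-]}A$. Your local identification therefore already holds over $A$, with no role for $B$, and its reduction modulo $p^n$ is a tautology. (The filtered comparison with $(W\Omega^\bullet_U,N)$ via a Frobenius lift is a de Rham--Witt form of Mazur's theorem. \cite[Proposition~4.5.3 and Theorem~7.3.1]{Ogus94} are different statements, which you need later.)

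Everything rests on the step you leave open. These chart isomorphisms depend on $\phi_\alpha$ and on $\widetilde U_\alpha$ beyond $Y$, and you must show they become choice-independent after $\otimes B$ and reduction mod $p^n$, coherently enough to glue. The Taylor-expansion heuristic does not prove this, and your starting comparison is the wrong one to globalize:
\begin{itemize}
\item It uses the linear gauge $i\mapsto\max(j-i,0)$, which is not $g$-tame.
\item It is not filtered on the \v{C}ech--Alexander charts. On the PD envelope of the diagonal, with its natural PD filtration, the Frobenius-lift map sends $\gamma_m(x\otimes1-1\otimes x)$ to $\gamma_m(Vy)=\frac{p^{m-1}}{m!}V(y^m)$. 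This lies in $N^{m-v_p(m!)}$ but in general not in $N^m$ once $m\geq p$.
\end{itemize}
So you must pass to the $g$-tame gauges $\epsilon_j(i)=g(\max(j-i,0))$ before comparing charts, and at that point the comparison is Ogus's rather than Mazur's. Likewise, independence of the lift beyond $W_{n+1}$ does not come from an extra factor of $p$ supplied by $\phi$. It comes from $(p^{n+1})^{[s]}=p^{h(s)}W$ with $h(s)\geq g(s)+ns$, which leaves at least $n$ spare powers of $p$ after the tame saturation absorbs $p^{g(s)}$.

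Concretely, the gluing needs the comparison on the completed PD envelopes $D_{Z_I}$ of $X|_{U_I}$ in products $Z_I$ of local lifts, natural in projections and diagonals. This requires three inputs.
\begin{enumerate}
\item \emph{Hodge side.} The dependence on $Y$ is recorded by the PD filtration $J_{Z_I}^{[\max(i-q,0)]}E^q_{Z_I}$ by the ideal of $U_I$, where $E^\bullet_{Z_I}$ is the PD de Rham complex. This filtration is not termwise split. You must identify the degree-$j$ part of $\mathcal H_{Y_n}\otimes^{\mathbf L}_{W_n[v_-]}B_n$, a two-term complex $T^{\epsilon_j}$, with the $\epsilon_j$-saturated PD filtration reduced mod $p^n$. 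That needs transversality of the PD and $p$-adic filtrations \cite[Proposition~3.3.3]{Ogus94}, together with \cite[Proposition~4.5.3]{Ogus94} and the bound above. It also needs the filtered Poincar\'e lemma \cite[Corollary~6.1.7]{Ogus94} to pass to $\Omega^\bullet_{U_{I,n}/W_n}$.
\item \emph{Nygaard side.} You need the Frobenius-free tame comparison. The $\epsilon$-saturations of the PD filtrations for the ideals of $X$ and of $Y$ coincide \cite[Lemma~4.3.7]{Ogus94}. The former is identified with $N^\epsilon$ via $\sigma^*$ and $\varphi$ \cite[Theorems~5.2.13 and~7.3.1]{Ogus94}.
\item \emph{Assembly.} All of these are natural in the embedding, so they assemble over the \v{C}ech nerve. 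You then still have to check $B_n$-linearity, with $\overline b_s$ acting as multiplication by $p^{\eta(s)}$.
\end{enumerate}
This is what the paper's proof does. Your proposal names the \v{C}ech--Alexander framework but supplies none of these inputs, so the gluing, and with it the lemma, remains unproved.
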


\begin{proof}
Put $B_n=B/p^nB$.
We identify the degree-$j$ part of the right-hand side with
a two-term construction applied to the Hodge filtration, and
the degree-$j$ part of the left-hand side with a weighted sum
of Nygaard subcomplexes.
Ogus's comparison \cite{Ogus94} identifies these two complexes.
We first introduce the notation, then establish Ogus's comparison,
identify its two sides with the degree-$j$ parts of the Rees base
changes, and finally check compatibility with the $B_n$-action
and pass to derived global sections.

\medskip
\noindent
\textit{Step 1: Filtrations and the complexes used in the comparison.}

Extend $g$ to a function on $\mathbb Z$ by setting
\[
 \eta(s):=g(\max(s,0)),
\]
and we put $\epsilon_j(i):=\eta(j-i).$
We use that $g$ is nondecreasing and subadditive, with
$g(0)=0$ and $g(1)=1$.
In particular,
$0\leq\eta(s)-\eta(s-1)\leq1$.

A gauge is a nonincreasing function
$\epsilon:\mathbb Z\to\mathbb Z$.
It is $g$-tame if
\[
 0\leq\epsilon(i)-\epsilon(i+r)\leq g(r)
 \qquad(i\in\mathbb Z,\ r\geq0).
\]
See \cite[Section~4.2]{Ogus94}.
The properties of $g$ imply that every $\epsilon_j$ is $g$-tame.

For a decreasing filtration $F$ by subcomplexes of a termwise
$p$-torsion-free complex $M^\bullet$, write
\[
 F^\epsilon M^\bullet
 :=
 \sum_{i\in\mathbb Z}p^{\epsilon(i)}F^iM^\bullet
 \subset M^\bullet[1/p].
\]
This construction uses the specified subcomplexes, not merely the filtered derived object they represent.
We will apply this notation to the filtrations $C$ and $N$
defined below.

We also need a construction that does not require
$p$-torsion-freeness.
For an inverse system of complexes $F^i$ with transition maps
$\iota_i:F^i\to F^{i-1}$, define
\[
 T^\epsilon(F)
 :=
 \operatorname{Tot}\left[
 \bigoplus_{i\in\mathbb Z}F^i
 \xrightarrow{\partial_\epsilon}
 \bigoplus_{i\in\mathbb Z}F^i
 \right],
\]
where the displayed terms have auxiliary cohomological degrees
$-1$ and $0$.
For $x$ in the $i$-th summand, the nonzero components are
\refstepcounter{equation}
\[
 (\partial_\epsilon x)_{i-1}=\iota_i(x),
 \qquad
 (\partial_\epsilon x)_i
 =-p^{\epsilon(i-1)-\epsilon(i)}x.
 \tag{\theequation}\label{eq:lem41-tame-differential}
\]
For $F^i=F^iM^\bullet$, we also write $T^\epsilon(M^\bullet,F)$.
This is \cite[(4.5.0.1)]{Ogus94}.
For a filtration $F$ by subcomplexes of a termwise
$p$-torsion-free complex $M^\bullet$, define the augmentation
\[
 T^\epsilon(M^\bullet,F)\longrightarrow F^\epsilon M^\bullet
\]
by sending $x\in F^iM^q$ in the $i$-th summand of the
right-hand term to $p^{\epsilon(i)}x$, and sending the
left-hand term to zero.
The two components in \eqref{eq:lem41-tame-differential}
have images $p^{\epsilon(i-1)}x$ and
$-p^{\epsilon(i-1)}x$, so this is a morphism of complexes.

On the crystalline side, put $K^\bullet=W\Omega_X^\bullet$.
Its Nygaard filtration is
\[
 N^iK^q=
 \begin{cases}
 p^{i-q-1}VW\Omega_X^q,&q<i,\\
 W\Omega_X^q,&q\geq i.
 \end{cases}
\]
Thus our notation for the weighted Nygaard subcomplex is
\[
 N^{\epsilon_j}K^\bullet
 =
 \sum_{i\in\mathbb Z}
 p^{\epsilon_j(i)}N^iK^\bullet
 \subset K^\bullet.
\]

\medskip
\noindent
\textit{Step 2: The local PD calculation and Ogus's comparison.}

Work locally on $Y$ and choose a smooth $p$-adic formal
$W$-scheme $\widetilde Y$ with
$
 \widetilde Y\bmod p^{n+1}\simeq Y.
$
Such a lifting exists after replacing $Y$ by a sufficiently
small affine open, since $Y/W_{n+1}$ is smooth.
Differential forms on formal schemes are understood to be
$p$-adically completed. Put
\[
 E^\bullet=\Omega^\bullet_{\widetilde Y/W},
 \qquad
 J_Y=p^{n+1}\mathcal O_{\widetilde Y},
\]
with the usual de Rham differential on $E^\bullet$.
The ideal of $X$ in $\widetilde Y$ is
$p\mathcal O_{\widetilde Y}$, which already has the
canonical divided powers. Thus the completed PD envelope
of $X$ in $\widetilde Y$, relative to the canonical divided
powers on $pW$, is $\widetilde Y$ itself.
The ideal $J_Y$ is stable under these divided powers.

Set
\[
 h(0)=0,
 \qquad
 h(s)=\inf_{m\geq s}\{(n+1)m-v_p(m!)\}
 \quad(s\geq1).
\]
The divided powers of $p^{n+1}$ of degree at least $s$
generate the ideal
\[
 \left(
 \frac{p^{(n+1)m}}{m!}\;\middle|\;m\geq s
 \right)
 =p^{h(s)}W.
\]
Consequently,
\[
 J_Y^{[s]}=p^{h(s)}\mathcal O_{\widetilde Y}
 \qquad(s\geq0),
\]
where $J_Y^{[s]}$ is generated by products of divided powers
of total PD degree at least $s$, and
$J_Y^{[0]}=\mathcal O_{\widetilde Y}$.
Moreover,
\refstepcounter{equation}
\[
 h(1)=n+1,
 \qquad
 h(s)\geq g(s)+ns\geq g(s)+n
 \quad(s\geq1).
 \tag{\theequation}\label{eq:lem41-control-bound}
\]

We use the filtration
\refstepcounter{equation}
\[
 C^iE^q
 :=J_Y^{[\max(i-q,0)]}E^q
 =p^{h(\max(i-q,0))}\Omega^q_{\widetilde Y/W}.
 \tag{\theequation}\label{eq:lem41-coefficients}
\]
Since $h$ is nondecreasing, these terms form a subcomplex.
Each $E^q$ is $p$-torsion-free, and $C^iE^q=E^q$ for $i\leq q$.
The explicit formula also gives
\refstepcounter{equation}
\[
 p^rE^q\cap C^iE^q
 =
 p^rC^iE^q+
 \sum_{s\geq1}p^{h(s)}C^{i-s}E^q
 \qquad(0\leq r\leq n+1).
 \tag{\theequation}\label{eq:lem41-transversality}
\]
To check this, put $k=i-q$.
The left-hand side is
$p^{\max(r,h(\max(k,0)))}E^q$.
If $k\leq0$, this is $p^rE^q$, which is already the first
term on the right.
All other terms lie in it because $h(s)\geq n+1\geq r$.
If $k>0$, the left-hand side is $p^{h(k)}E^q$.
The term $s=k$ on the right is exactly this submodule,
and every other term lies in it because
\[
 h(s)+h(\max(k-s,0))\geq h(k).
\]
Here we use the monotonicity and subadditivity of $h$.


Under the natural identification
$E^\bullet/p^nE^\bullet=\Omega^\bullet_{Y_n/W_n}$,
use the image filtration
\[
 \overline C^i
 =
 \operatorname{im}
 \bigl(C^iE^\bullet\longrightarrow E^\bullet/p^nE^\bullet\bigr).
\]
By \eqref{eq:lem41-coefficients} and
\eqref{eq:lem41-control-bound}, its degree-$q$ term is zero
for $q<i$ and is
$\Omega^q_{Y_n/W_n}$ for $q\geq i$.
Hence
\refstepcounter{equation}
\[
 \overline C^i(E^\bullet/p^nE^\bullet)
 =
 \Omega^{\geq i}_{Y_n/W_n}
 \qquad(i\in\mathbb Z).
 \tag{\theequation}\label{eq:lem41-filtered-poincare}
\]

Now \eqref{eq:lem41-control-bound} and
\eqref{eq:lem41-transversality} allow us to apply
\cite[Proposition~4.5.3]{Ogus94}, termwise and then totalize:
\[
 T^{\epsilon_j}(E^\bullet/p^nE^\bullet,\overline C)
 \simeq
 C^{\epsilon_j}E^\bullet/p^nC^{\epsilon_j}E^\bullet.
\]
Combining this with \eqref{eq:lem41-filtered-poincare} gives
\refstepcounter{equation}
\[
 T^{\epsilon_j}
 \bigl(\Omega^\bullet_{Y_n/W_n},\operatorname{Fil}_{\mathrm H}\bigr)
 \simeq
 C^{\epsilon_j}E^\bullet\otimes_W^{\mathbf L}W_n,
 \qquad
 \operatorname{Fil}_{\mathrm H}^i=\Omega^{\geq i}_{Y_n/W_n}.
 \tag{\theequation}\label{eq:lem41-finite-hodge-comparison}
\]
Here $C^{\epsilon_j}E^\bullet$ is termwise $p$-torsion-free,
so its derived reduction is its ordinary quotient by $p^n$.
Note that this quotient is not the image of $C^{\epsilon_j}E^\bullet$
in $E^\bullet/p^nE^\bullet$.

To apply the crystalline comparison, put
$J_X=p\mathcal O_{\widetilde Y}$.
The divided powers on $J_X$ induce the filtration
\[
 \widetilde C^iE^\bullet
 :=
 \sum_{s\geq0}p^{g(s)}C^{i-s}E^\bullet,
 \qquad
 \widetilde C^iE^q
 =
 J_X^{[\max(i-q,0)]}E^q
 =p^{g(\max(i-q,0))}E^q.
\]
For a $g$-tame gauge $\epsilon$,
$\epsilon(i)+g(s)\geq\epsilon(i-s)$.
Consequently,
\refstepcounter{equation}
\[
 \widetilde C^\epsilon E^\bullet
 =
 \sum_{\substack{i\in\mathbb Z\\s\geq0}}
 p^{\epsilon(i)+g(s)}C^{i-s}E^\bullet
 =
 C^\epsilon E^\bullet.
 \tag{\theequation}\label{eq:lem41-saturation}
\]
The inclusion
$\widetilde C^\epsilon E^\bullet\subseteq C^\epsilon E^\bullet$
follows termwise from
\[
 p^{\epsilon(i)+g(s)}C^{i-s}E^\bullet
 \subseteq
 p^{\epsilon(i-s)}C^{i-s}E^\bullet
 \qquad(s\geq0),
\]
by $g$-tameness.
The terms with $s=0$ give the reverse inclusion,
since $g(0)=0$.
This is \cite[Lemma~4.3.7]{Ogus94}.

We spell out the change from Ogus's convention to the
preimage convention for the Nygaard filtration.
Let $\sigma$ be Frobenius on $W$, and let
\[
 \varphi:\sigma^*K^\bullet\longrightarrow K^\bullet
\]
be the linearized crystalline Frobenius, given in degree $q$
by $p^qF$.
Since $X$ is smooth, each $K^q$ is $p$-torsion-free and
$F$ is injective
\cite[Remark~2.2.3 and Theorem~4.4.12]{BLM}.
Hence $\varphi$ is termwise injective.
Using $FV=p$ and the termwise description of the Nygaard
filtration, we obtain
\refstepcounter{equation}
\[
 \sigma^*N^iK^\bullet
 =
 (\sigma^*K^\bullet)\cap
 \varphi^{-1}(p^iK^\bullet)
 \subset(\sigma^*K^\bullet)[1/p].
 \tag{\theequation}\label{eq:lem41-nygaard-preimage}
\]

For $r\in\mathbb Z$, define
\[
 I^rK^\bullet
 :=
 K^\bullet\cap p^r\varphi(\sigma^*K^\bullet)
 \subset K^\bullet[1/p].
\]

We use $K^\bullet$ to compute the crystalline complex
via \cite[Theorem~10.1.1 and its proof]{BLM},
with filtered compatibility as in
\cite[Section~7.1]{Ogus94}.
For the structure crystal $\mathcal O_{X/W}$,
\cite[Theorems~5.2.13 and 7.3.1]{Ogus94}
then gives, for every $g$-tame gauge $\epsilon$,
\[
 \sigma^*(\widetilde C^\epsilon E^\bullet)
 \simeq
 \sum_{r\in\mathbb Z}
 p^{\epsilon(-r)-r}I^rK^\bullet.
\]
Here the underlying Zariski spaces of $X$ and $X^{(p)}$
are identified via relative Frobenius.

On the other hand,
\eqref{eq:lem41-nygaard-preimage} gives
\[
 \begin{aligned}
 \varphi(\sigma^*N^iK^\bullet)
 &=
 p^iK^\bullet\cap\varphi(\sigma^*K^\bullet)\\
 &=p^iI^{-i}K^\bullet.
 \end{aligned}
\]
Consequently,
\[
 \begin{aligned}
 \varphi\bigl(\sigma^*(N^\epsilon K^\bullet)\bigr)
 &=
 \sum_{i\in\mathbb Z}
 p^{\epsilon(i)+i}I^{-i}K^\bullet\\
 &=
 \sum_{r\in\mathbb Z}
 p^{\epsilon(-r)-r}I^rK^\bullet.
 \end{aligned}
\]
This is \cite[(5.1.1.3)]{Ogus94} in our notation.
Since $\varphi$ is injective, we obtain
\[
 \sigma^*(\widetilde C^\epsilon E^\bullet)
 \simeq
 \sigma^*(N^\epsilon K^\bullet).
\]
Using \eqref{eq:lem41-saturation} and the fact that
$\sigma$ is an automorphism of $W$, we conclude that
\refstepcounter{equation}
\[
 C^{\epsilon_j}E^\bullet
 \simeq N^{\epsilon_j}K^\bullet.
 \tag{\theequation}\label{eq:lem41-ogus-nygaard}
\]

Finally, combining
\eqref{eq:lem41-finite-hodge-comparison} and
\eqref{eq:lem41-ogus-nygaard}, we obtain
\refstepcounter{equation}
\[
 T^{\epsilon_j}
 \bigl(\Omega^\bullet_{Y_n/W_n},\operatorname{Fil}_{\mathrm H}\bigr)
 \simeq
 N^{\epsilon_j}K^\bullet\otimes_W^{\mathbf L}W_n.
 \tag{\theequation}\label{eq:lem41-finite-comparison}
\]

\medskip
\noindent
\textit{Step 3: Identification with the two Rees base changes.}

Recall that $A=W[v_-,v_+]/(v_-v_+-p)$.
It is convenient to make the algebraic calculations before
$p$-adic completion.
Inside $A[v_-^{-1}]=W[v_-,v_-^{-1}]$, put
\[
 B_{\mathrm{alg}}
 :=
 \bigoplus_{s\in\mathbb Z}Wb_s,
 \qquad
 b_s:=p^{\eta(s)}v_-^{-s},
 \qquad
 \delta_s=\eta(s)-\eta(s-1).
\]
Thus $B_{\mathrm{alg}}$ is the graded algebra whose
$p$-adic completion is $B$, and
\refstepcounter{equation}
\[
 v_-b_s=p^{\delta_s}b_{s-1},
 \qquad
 b_sb_a
 =
 p^{\eta(s)+\eta(a)-\eta(s+a)}b_{s+a}.
 \tag{\theequation}\label{eq:lem41-b-relations}
\]
Both $B_{\mathrm{alg}}$ and $B$ are $W$-flat, and
\refstepcounter{equation}
\[
 B_{\mathrm{alg}}/p^nB_{\mathrm{alg}}
 \simeq B/p^nB=B_n.
 \tag{\theequation}\label{eq:lem41-finite-algebra}
\]
It therefore suffices to use $B_{\mathrm{alg}}$ in the
integral calculation and reduce modulo $p^n$ at the end.

\smallskip
\noindent
\textit{The Hodge side.}
Since
\[
 B_n=\bigoplus_{s\in\mathbb Z}W_n\overline b_s,
\]
the standard exact sequence in the proof of
\cite[Theorem~4.3.7]{Weibel94}, applied to $B_n$ over
$W_n[v_-]$, gives a graded free resolution
\refstepcounter{equation}
\[
 0\longrightarrow\bigoplus_{s\in\mathbb Z}W_n[v_-]f_s
 \xrightarrow{d}
 \bigoplus_{s\in\mathbb Z}W_n[v_-]e_s
 \xrightarrow{\rho}B_n
 \longrightarrow0,
 \tag{\theequation}\label{eq:lem41-hodge-resolution}
\]
where
\[
 \deg(f_s)=s-1,\qquad \deg(e_s)=s,
\]
and, by \eqref{eq:lem41-b-relations},
\[
 d(f_s)=v_-e_s-p^{\delta_s}e_{s-1},
 \qquad
 \rho(e_s)=\overline b_s.
\]
As ungraded modules, both free terms are identified with
$W_n[v_-]\otimes_{W_n}B_n$, with $f_s$ and $e_s$
corresponding to $1\otimes\overline b_s$ in the respective
copies. Both maps have degree zero.

Give both free modules the right $B_n$-action induced by
\[
 (a\otimes b)\cdot c=a\otimes bc.
\]
In terms of the chosen bases, this is
\refstepcounter{equation}
\[
 \begin{aligned}
 e_r\cdot\overline b_s
 &=p^{\eta(r)+\eta(s)-\eta(r+s)}e_{r+s},\\
 f_r\cdot\overline b_s
 &=p^{\eta(r)+\eta(s)-\eta(r+s)}f_{r+s}.
 \end{aligned}
 \tag{\theequation}\label{eq:lem41-resolution-b-action}
\]
The tensor descriptions
\[
 d(a\otimes b)=v_-a\otimes b-a\otimes v_-b,
 \qquad
 \rho(a\otimes b)=ab
\]
give
\[
 d(\xi c)=d(\xi)c,\qquad
 \rho(\zeta c)=\rho(\zeta)c
 \qquad(c\in B_n).
\]
Thus \eqref{eq:lem41-hodge-resolution} is an exact sequence
of graded $W_n[v_-]$--$B_n$ bimodules.

Let
$\mathscr H=\bigoplus_i\Omega^{\geq i}_{Y_n/W_n}$
be the sheaf-level Hodge Rees complex.
Tensoring this resolution with $\mathscr H$ over $W_n[v_{-}]$ gives
\[
 \left(\mathscr H\otimes_{W_n[v_-]}^{\mathbf L}B_n\right)^j
 \simeq
 \operatorname{Tot}
 \left[
 \bigoplus_{s\in\mathbb Z}
 \Omega^{\geq j-s+1}_{Y_n/W_n}
 \longrightarrow
 \bigoplus_{s\in\mathbb Z}
 \Omega^{\geq j-s}_{Y_n/W_n}
 \right].
\]
Set $i=j-s+1$ in the source.
The two components of its differential are the inclusion
into filtration level $i-1$ and multiplication by
$-p^{\delta_{j-i+1}}$ into filtration level $i$.
Since
\[
 \delta_{j-i+1}
 =
 \epsilon_j(i-1)-\epsilon_j(i),
\]
this is exactly the differential in
\eqref{eq:lem41-tame-differential} defining $T^{\epsilon_j}$.
Hence
\refstepcounter{equation}
\[
 \left(\mathscr H\otimes_{W_n[v_-]}^{\mathbf L}B_n\right)^j
 \simeq
 T^{\epsilon_j}
 \bigl(\Omega^\bullet_{Y_n/W_n},\operatorname{Fil}_{\mathrm H}\bigr).
 \tag{\theequation}\label{eq:lem41-hodge-rees}
\]
The bimodule resolution retains the natural $B_n$-action.

\smallskip
\noindent
\textit{The Nygaard side.}
Let $\mathscr N=\bigoplus_iN^iK^\bullet$ ($K^{\bullet} = W\Omega_X^{\bullet}$) be the Nygaard
Rees complex, with $v_-$ acting by inclusion and $v_+$ by $p$.
We first check that $\mathscr N$ is termwise $A$-flat.
At a stalk, put $M=K_x^q$ and $L=(VK^q)_x$.
Then $M$ is $p$-torsion-free and $pM\subseteq L\subseteq M$.
For each finitely generated $W$-submodule $M_\lambda\subseteq M$,
put $L_\lambda=M_\lambda\cap L$ and
\[
 F_\lambda^i=
 \begin{cases}
 M_\lambda,&i\leq q,\\
 p^{i-q-1}L_\lambda,&i>q.
 \end{cases}
\]
The module $M_\lambda$ is $W$-free, and
$pM_\lambda\subseteq L_\lambda\subseteq M_\lambda$.
Choose a basis adapted to $L_\lambda$:
\[
 M_\lambda=\bigoplus_a We_a\oplus\bigoplus_b Wf_b,
 \qquad
 L_\lambda=\bigoplus_a We_a\oplus\bigoplus_b pWf_b.
\]
The Rees module $\bigoplus_iF_\lambda^i$ is therefore
free over $A$, with generators $e_a$ in degree $q+1$
and $f_b$ in degree $q$.
Since
\[
 \operatorname*{colim}_\lambda F_\lambda^i=N^iK_x^q,
\]
$\mathscr N_x^q$ is $A$-flat.
The complex $\mathscr N$ is bounded, so its ordinary
tensor product computes derived base change.

Consequently, $B_{\mathrm{alg}}\subset A[v_-^{-1}]$ gives
a termwise injection
\[
 \mathscr N\otimes_A B_{\mathrm{alg}}
 \longrightarrow\mathscr N[v_-^{-1}].
\]
The degree-zero part of the target is $K^\bullet$.
Taking degree $j$ and multiplying by $v_-^j$ gives
\[
 x\otimes b_{j-i}\longmapsto p^{\epsilon_j(i)}x,
 \qquad x\in N^iK^\bullet.
\]
Its image is exactly $N^{\epsilon_j}K^\bullet$, and
the map commutes with the differentials. Thus
\refstepcounter{equation}
\[
 \left(
 \mathscr N\otimes_A^{\mathbf L}B_{\mathrm{alg}}
 \right)^j
 \simeq N^{\epsilon_j}K^\bullet.
 \tag{\theequation}\label{eq:lem41-nygaard-rees}
\]

\medskip
\noindent
\textit{Step 4: Compatibility with the $B_n$-action and globalization.}
We first prove that \eqref{eq:lem41-finite-comparison} is
$B_n$-linear, and then prove that it is independent of the
local formal lift.

Let $\alpha_j$ and $\gamma_j$ denote the comparisons
\eqref{eq:lem41-finite-hodge-comparison} and
\eqref{eq:lem41-ogus-nygaard}, respectively:
\[
 \begin{aligned}
 \alpha_j:
 T^{\epsilon_j}(\Omega^\bullet_{Y_n/W_n},\operatorname{Fil}_{\mathrm H})
 &\xrightarrow{\sim}C^{\epsilon_j}E^\bullet/p^nC^{\epsilon_j}E^\bullet,\\
 \gamma_j:C^{\epsilon_j}E^\bullet
 &\xrightarrow{\sim}N^{\epsilon_j}K^\bullet.
 \end{aligned}
\]
Thus the comparison in \eqref{eq:lem41-finite-comparison} is
\refstepcounter{equation}
\[
 \beta_j=(\gamma_j\otimes_W^{\mathbf L}W_n)\circ\alpha_j.
 \tag{\theequation}\label{eq:lem41-local-comparison}
\]
For $i+r=j$, write an element of the right-hand term of
the two-term model in \eqref{eq:lem41-hodge-rees} as
$\overline x\otimes e_r$, where
$\overline x\in(\Omega^{\geq i}_{Y_n/W_n})^q$, and choose a lift
$x\in C^iE^q$. The map $\alpha_j$ is given by
\refstepcounter{equation}
\[
 \alpha_j(\overline x\otimes e_r)=[p^{\eta(r)}x],
 \qquad
 \alpha_j\big|_{\text{left-hand term}}=0.
 \tag{\theequation}\label{eq:lem41-finite-augmentation}
\]
This is the augmentation inducing
\eqref{eq:lem41-finite-hodge-comparison}, as in
\cite[(4.5.0.2) and Proposition~4.5.3]{Ogus94}.
Its independence of the
lift follows from \eqref{eq:lem41-transversality},
\eqref{eq:lem41-control-bound}, and tameness, which give
\[
 p^{\epsilon_j(i)}(p^nE^q\cap C^iE^q)
 \subseteq p^nC^{\epsilon_j}E^q.
\]
By \eqref{eq:lem41-resolution-b-action} and
\eqref{eq:lem41-finite-augmentation}, we obtain
\refstepcounter{equation}
\[
 \begin{aligned}
 \alpha_{j+s}\bigl((\overline x\otimes e_r)\overline b_s\bigr)
 &=[p^{\eta(r)+\eta(s)-\eta(r+s)}p^{\eta(r+s)}x]\\
 &=p^{\eta(s)}\alpha_j(\overline x\otimes e_r).
 \end{aligned}
 \tag{\theequation}\label{eq:lem41-alpha-action}
\]
Equation \eqref{eq:lem41-alpha-action} also holds on the
left-hand term, since both maps vanish there by
\eqref{eq:lem41-finite-augmentation}.

On the Nygaard side, \eqref{eq:lem41-nygaard-rees} identifies
the action of $b_s$ with multiplication by $p^{\eta(s)}$.
Indeed, for a degree-$j$ element $z$ of
$\mathscr N\otimes_A B_{\mathrm{alg}}$,
\[
 v_-^{j+s}(b_sz)=p^{\eta(s)}v_-^jz.
\]
This multiplication also maps $C^{\epsilon_j}E^\bullet$ to
$C^{\epsilon_{j+s}}E^\bullet$, since
\[
 \epsilon_{j+s}(i)\leq\epsilon_j(i)+\eta(s).
\]
The comparison of tame-gauge filtered objects in
\cite[Theorem~7.3.1]{Ogus94}, used to construct $\gamma_j$,
respects gauge inclusions and the translations
$F^{\epsilon+a}=p^aF^\epsilon$. Applying it to
\[
 C^{\epsilon_j}E^\bullet
 \xrightarrow{\,p^{\eta(s)}\,}
 C^{\epsilon_j+\eta(s)}E^\bullet
 \hookrightarrow C^{\epsilon_{j+s}}E^\bullet
\]
and to
\[
 N^{\epsilon_j}K^\bullet
 \xrightarrow{\,p^{\eta(s)}\,}
 N^{\epsilon_j+\eta(s)}K^\bullet
 \hookrightarrow N^{\epsilon_{j+s}}K^\bullet
\]
gives
\refstepcounter{equation}
\[
 \gamma_{j+s}\circ p^{\eta(s)}
 =p^{\eta(s)}\circ\gamma_j.
 \tag{\theequation}\label{eq:lem41-gamma-action}
\]
Applying $-\otimes_W^{\mathbf L}W_n$ to
\eqref{eq:lem41-gamma-action} and using
\eqref{eq:lem41-local-comparison} and
\eqref{eq:lem41-alpha-action}, we obtain
\refstepcounter{equation}
\[
 \beta_{j+s}\circ(\cdot\overline b_s)
 =(\cdot p^{\eta(s)})\circ\beta_j.
 \tag{\theequation}\label{eq:lem41-action-compatibility}
\]
Thus $\bigoplus_j\beta_j$ is an equivalence
in the derived category of graded $B_n$-modules.

We now globalize \eqref{eq:lem41-local-comparison}.
The argument is the same as the simplicial PD de Rham argument
in \cite[Remark~6.1.8 and the proof of Theorem~7.3.1]{Ogus94};
we describe only the comparison maps and the results used.
Choose a finite open covering $Y=\bigcup_a U_a$ with smooth
formal liftings $\widetilde U_a/W$. For $I=(a_0,\ldots,a_r)$, put
\[
 \begin{gathered}
 U_I=\bigcap_{\nu=0}^r U_{a_\nu},\qquad
 U_{I,n}=U_I\times_{W_{n+1}}W_n,\\
 Z_I=\widetilde U_{a_0}\times_W\cdots\times_W\widetilde U_{a_r}.
 \end{gathered}
\]
To compare the local maps on $U_I$, use the induced embedding
$U_I\hookrightarrow Z_I$.
Let $D_{Z_I}$ be the completed PD envelope of
$U_I\times_{W_{n+1}}k\hookrightarrow Z_I$ relative to the
canonical divided powers on $pW$, and set
\[
 \begin{aligned}
 E_{Z_I}^\bullet
 &=\mathcal O_{D_{Z_I}}
   \widehat\otimes_{\mathcal O_{Z_I}}\Omega^\bullet_{Z_I/W},\\
 J_{Z_I}&=\ker(\mathcal O_{D_{Z_I}}\to\mathcal O_{U_I}),\\
 C_{Z_I}^iE_{Z_I}^q
 &=J_{Z_I}^{[\max(i-q,0)]}E_{Z_I}^q,\\
 \overline C_{Z_I}^i
 &=\operatorname{im}(C_{Z_I}^iE_{Z_I}^\bullet
             \to E_{Z_I}^\bullet/p^nE_{Z_I}^\bullet).
 \end{aligned}
\]
These are regarded as complexes of sheaves on $|U_I|$.

Apply the construction of Step~2 to $(E_{Z_I}^\bullet,C_{Z_I})$.
Here \cite[Proposition~3.3.3]{Ogus94} supplies the
transversality used in \eqref{eq:lem41-transversality};
together with \eqref{eq:lem41-control-bound} and
\cite[Proposition~4.5.3]{Ogus94}, it gives the first arrow below.
The second uses \eqref{eq:lem41-saturation} and the crystalline
comparison used in \eqref{eq:lem41-ogus-nygaard}:
\refstepcounter{equation}
\[
 \begin{aligned}
 \beta_{Z_I,j}:\quad
 &T^{\epsilon_j}(E_{Z_I}^\bullet/p^nE_{Z_I}^\bullet,
                         \overline C_{Z_I})\\
 &\xrightarrow{\sim}
 C_{Z_I}^{\epsilon_j}E_{Z_I}^\bullet
                         \otimes_W^{\mathbf L}W_n\\
 &\xrightarrow{\sim}
 \left.(N^{\epsilon_j}K^\bullet
                         \otimes_W^{\mathbf L}W_n)\right|_{U_I}.
 \end{aligned}
 \tag{\theequation}\label{eq:lem41-pd-beta}
\]
On the Hodge side, \cite[Corollary~6.1.7]{Ogus94}, applied
with the PD filtration and the $p$-adic filtration, together
with \eqref{eq:lem41-filtered-poincare}, shows that restriction
$\rho_{Z_I}:E_{Z_I}^\bullet/p^nE_{Z_I}^\bullet
\to\Omega^\bullet_{U_{I,n}/W_n}$ induces
\refstepcounter{equation}
\[
 \begin{aligned}
 T^{\epsilon_j}(\rho_{Z_I}):\quad
 &T^{\epsilon_j}(E_{Z_I}^\bullet/p^nE_{Z_I}^\bullet,
                         \overline C_{Z_I})\\
 &\xrightarrow{\sim}
 T^{\epsilon_j}(\Omega^\bullet_{U_{I,n}/W_n},
                         \operatorname{Fil}_{\mathrm H}).
 \end{aligned}
 \tag{\theequation}\label{eq:lem41-pd-rho}
\]

Use Ogus's functorial comparison diagrams for all these
products. For a projection or diagonal $f:Z_{I'}\to Z_I$
occurring in the \v{C}ech construction, pullback gives
\[
 \overline f^*:
 \left.(E_{Z_I}^\bullet/p^nE_{Z_I}^\bullet)\right|_{U_{I'}}
 \longrightarrow E_{Z_{I'}}^\bullet/p^nE_{Z_{I'}}^\bullet.
\]
Naturality of restriction and of
\cite[Proposition~4.5.3 and Theorem~7.3.1]{Ogus94} gives
\refstepcounter{equation}
\[
 \begin{aligned}
 T^{\epsilon_j}(\rho_{Z_{I'}})
       \circ T^{\epsilon_j}(\overline f^*)
 &=\left.T^{\epsilon_j}(\rho_{Z_I})\right|_{U_{I'}},\\
 \beta_{Z_{I'},j}\circ T^{\epsilon_j}(\overline f^*)
 &=\left.\beta_{Z_I,j}\right|_{U_{I'}}.
 \end{aligned}
 \tag{\theequation}\label{eq:lem41-cech-compatibility}
\]
Thus \eqref{eq:lem41-pd-rho} and \eqref{eq:lem41-pd-beta}
give morphisms of \v{C}ech diagrams, as in Ogus's construction.
Their compatibility with gauge inclusions and multiplication
by powers of $p$ also gives
\eqref{eq:lem41-alpha-action} and \eqref{eq:lem41-gamma-action}
on these diagrams.

Let $\pi:U_\bullet\to X$ be the augmented \v{C}ech nerve
of the open covering on $|Y|=|X|$, with
$U_r=\coprod_{I=(a_0,\ldots,a_r)}U_I$.
Cohomological descent
\cite[Tag~0DA5]{Stacks} gives
\[
 \mathscr F\xrightarrow{\sim}\mathbf R\pi_*\pi^*\mathscr F
 \qquad(\mathbf L\pi^*=\pi^*).
\]
Apply this to
\[
 \mathscr F=
 T^{\epsilon_j}(\Omega^\bullet_{Y_n/W_n},
                         \operatorname{Fil}_{\mathrm H})
 \quad\text{and}\quad
 \mathscr F=N^{\epsilon_j}K^\bullet\otimes_W^{\mathbf L}W_n.
\]
Applying $\mathbf R\pi_*$ to the functorial diagrams in
\eqref{eq:lem41-pd-rho} and \eqref{eq:lem41-pd-beta} then gives
\[
 \begin{aligned}
 \beta_j^{\mathrm{glob}}
 &:=(\mathbf R\pi_*\beta_{Z_\bullet,j})
       \circ(\mathbf R\pi_*T^{\epsilon_j}(\rho_{Z_\bullet}))^{-1}:\\
 &\quad T^{\epsilon_j}
   (\Omega^\bullet_{Y_n/W_n},\operatorname{Fil}_{\mathrm H})
 \xrightarrow{\sim}
 N^{\epsilon_j}K^\bullet\otimes_W^{\mathbf L}W_n.
 \end{aligned}
\]
By construction,
\refstepcounter{equation}
\[
 \beta_j^{\mathrm{glob}}\big|_{U_I}
 =\beta_{Z_I,j}\circ T^{\epsilon_j}(\rho_{Z_I})^{-1}.
 \tag{\theequation}\label{eq:lem41-global-restriction}
\]

By \eqref{eq:lem41-action-compatibility} and
\eqref{eq:lem41-global-restriction}, these comparisons satisfy
\[
 \beta_{j+s}^{\mathrm{glob}}\circ(\cdot\overline b_s)
 =
 (\cdot p^{\eta(s)})\circ\beta_j^{\mathrm{glob}}
 \qquad(j,s\in\mathbb Z).
\]
Combining all $j$ and using
\eqref{eq:lem41-hodge-rees},
\eqref{eq:lem41-nygaard-rees}, and
\eqref{eq:lem41-finite-algebra}, we obtain an equivalence
of sheaves of graded $B_n$-complexes
\refstepcounter{equation}
\[
 \mathscr N\otimes_A^{\mathbf L}B_n
 \simeq
 \mathscr H\otimes_{W_n[v_-]}^{\mathbf L}B_n.
 \tag{\theequation}\label{eq:lem41-global-sheaf-comparison}
\]

On the common noetherian finite-dimensional Zariski space
of $X$ and $Y_n$, derived global sections commute with
direct sums and have finite cohomological dimension.
A graded free resolution of $B_n$ therefore gives
\refstepcounter{equation}
\[
 \mathbf R\Gamma
 \bigl(X,\mathscr M\otimes_R^{\mathbf L}B_n\bigr)
 \simeq
 \mathbf R\Gamma(X,\mathscr M)
       \otimes_R^{\mathbf L}B_n
 \tag{\theequation}\label{eq:lem41-global-projection-formula}
\]
for
\[
 (\mathscr M,R)
 =
 (\mathscr N,A),
 \qquad
 (\mathscr H,W_n[v_-]).
\]
Applying derived global sections to
\eqref{eq:lem41-global-sheaf-comparison}
and using \eqref{eq:lem41-global-projection-formula}, we conclude
\[
 \begin{aligned}
 (\mathcal N_X\otimes_A^{\mathbf L}B)
       \otimes_W^{\mathbf L}W_n
 &\simeq
 \mathcal N_X\otimes_A^{\mathbf L}B_n\\
 &\simeq
 \mathbf R\Gamma
 \bigl(X,\mathscr N\otimes_A^{\mathbf L}B_n\bigr)\\
 &\simeq
 \mathbf R\Gamma
 \bigl(Y_n,
 \mathscr H\otimes_{W_n[v_-]}^{\mathbf L}B_n\bigr)\\
 &\simeq
 \mathcal H_{Y_n}\otimes_{W_n[v_-]}^{\mathbf L}B_n,
 \end{aligned}
\]
as required.
\end{proof}

\subsection{The finite-height summand on a lifting}
We transport the summand of Lemma~\ref{edge:lower-split} to the
Hodge filtration of a finite Witt lifting.

Put $B_n=B/p^nB$.
For a graded $W_n[v_-]$-complex $M$, write
\[
 \Fil^iM=M^{(i)},
 \qquad
 \operatorname{gr}^iM
 =\bigl(M\otimes_{W_n[v_-]}^{\mathbf L}W_n\bigr)^{(i)},
 \qquad W_n=W_n[v_-]/(v_-).
\]

Recall that $\mathcal U_h$ is the graded $A$-complex with
degree-$i$ part
\[
 \mathcal U_h^{(i)}
 =
 \begin{cases}
 \mathbf R\Gamma(X,W\mathcal O_X),&i\leq0,\\
 \mathbf R\Gamma(X,p^{i-1}VW\mathcal O_X),&i\geq1,
 \end{cases}
\]
where $v_-$ acts by inclusion and $v_+$ by multiplication by $p$.
Projection $W\Omega_X^\bullet\to W\mathcal O_X[0]$ induces
$q:\mathcal N_X\to\mathcal U_h$.
By Lemma~\ref{edge:lower-split}, there is a section
\[
 s:\mathcal U_h\longrightarrow\mathcal N_X,
 \qquad q\circ s=\operatorname{id}_{\mathcal U_h}.
\]

\begin{lemma}\label{edge:descent}
Let
\[
 c_n:
 \mathcal N_X\otimes_A^{\mathbf L}B_n
 \xrightarrow{\sim}
 \mathcal H_{Y_n}\otimes_{W_n[v_-]}^{\mathbf L}B_n
\]
be the comparison of Lemma~\ref{edge:comparison}.

There exist a bounded finite free graded $W_n[v_-]$-complex
$\mathcal U_{H,n}$ and a morphism
\[
 s_H:\mathcal U_{H,n}\longrightarrow\mathcal H_{Y_n}
\]
satisfying the following:
\begin{enumerate}
\item There is an isomorphism
\[
 \beta_n:
 \mathcal U_{H,n}\otimes_{W_n[v_-]}^{\mathbf L}B_n
 \xrightarrow{\sim}
 \mathcal U_h\otimes_A^{\mathbf L}B_n
\]
such that
\[
 s_H\otimes_{W_n[v_-]}^{\mathbf L}B_n
 =
 c_n\circ
 \bigl(s\otimes_A^{\mathbf L}B_n\bigr)\circ\beta_n
\]
in the derived category of graded $B_n$-modules.

\item The induced morphism
\[
 \operatorname{gr}^0(s_H):
 \operatorname{gr}^0\mathcal U_{H,n}
 \longrightarrow
 \operatorname{gr}^0\mathcal H_{Y_n}
 \simeq
 \mathbf R\Gamma(Y_n,\mathcal O_{Y_n})
\]
is an isomorphism in the derived category $D(W_n)$
of $W_n$-modules.

\item We have
\[
 \Fil^2\mathcal U_{H,n}=0,
 \qquad
 \Fil^1\mathcal U_{H,n}
 \simeq W_n^{\oplus(h-1)}[-d].
\]
\end{enumerate}
\end{lemma}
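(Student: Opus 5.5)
We define $\mathcal U_{H,n}$ directly from the free model \eqref{eq:lower-split-free-model} of Lemma~\ref{edge:lower-split}(1):
\[
 \mathcal U_{H,n}:=W_n[v_-]\mathbf 1\oplus\Bigl(W_n[v_-]z\oplus\bigoplus_{a=1}^{h-1}W_n[v_-]e_a\Bigr)[-d],
\]
with $\deg\mathbf 1=\deg z=0$, $\deg e_a=1$, and zero differential. This is bounded and finite free. Since $\Fil^i$ of a free generator in degree $w$ is $W_n v_-^{w-i}$ for $i\le w$ and $0$ for $i>w$, we get $\Fil^2\mathcal U_{H,n}=0$ and $\Fil^1\mathcal U_{H,n}=\bigoplus_a W_n e_a[-d]\simeq W_n^{\oplus(h-1)}[-d]$, which is (3).

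For $\beta_n$, we compare base changes of free generators. A generator $A\ell$ in degree $w$ gives $B_n\ell$ shifted by $w$, and likewise $W_n[v_-]\ell\otimes B_n$ gives $B_n\ell$ shifted by $w$. Hence both sides of (1) are the same free graded $B_n$-module on generators of degrees $0,0,1$ (with the same cohomological shifts), and $\beta_n$ is the identity on generators. Composing gives a morphism
\[
 \sigma:=c_n\circ(s\otimes B_n)\circ\beta_n:\ \mathcal U_{H,n}\otimes_{W_n[v_-]}^{\mathbf L}B_n\longrightarrow\mathcal H_{Y_n}\otimes_{W_n[v_-]}^{\mathbf L}B_n .
\]
We want to descend $\sigma$ to $s_H$. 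Because $\mathcal U_{H,n}$ is free, extension of scalars gives
\[
 \Hom(\mathcal U_{H,n}\otimes B_n,\mathcal H_{Y_n}\otimes B_n)\cong \Hom_{W_n[v_-]}(\mathcal U_{H,n},\mathcal H_{Y_n}\otimes B_n).
\]
Since the generators sit in degrees $0$ and $1$, this reduces to the cohomology of $(\mathcal H_{Y_n}\otimes B_n)^{(w)}$ for $w\in\{0,1\}$. By \eqref{eqn:Bdegree}, $B^s\cong A^s$ for $s<p$; since $p>2$, in degrees $0,1$ we have $(\mathcal H_{Y_n}\otimes B_n)^{(w)}\simeq T^{\epsilon_w}$. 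Moreover $\eta(s)=s$ for $s<p$, so in low degrees $\epsilon_w(i)$ agrees with the untwisted $p$-adic gauge $\max(w-i,0)$.

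The main obstacle is to show that the unit map
\[
 \mathcal H_{Y_n}^{(w)}=\Fil^w\longrightarrow (\mathcal H_{Y_n}\otimes B_n)^{(w)},\qquad w=0,1,
\]
is an equivalence, or at least that the images of the three generators lift along it. I would argue this using the resolution \eqref{eq:lem41-hodge-resolution}. In degree $w$, the complex $T^{\epsilon_w}$ consists of the terms $\Fil^i$ for $i\ge w$, with $\delta=0$ connecting them, together with the terms $\Fil^{i}$ for $i<w$, linked by multiplication by $p^{\delta}$ with $\delta=1$ as long as $w-i<p$. The telescope over $i\ge w$ collapses to $\Fil^w$. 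The remaining part involves only finitely many $i<w$; for $w\le1$ this is the single term $i=0$ together with the negative $i$, where $\Fil^i=\Fil^0$. This tail does not vanish in general, so the map need not be an equivalence. My fallback is therefore to lift each generator individually. For $z$ and $\mathbf 1$ (degree $0$), the image of $e_0$ in $B$ is $1$, so $\sigma(\mathbf 1)$ and $\sigma(z)$ are images of classes in $\Fil^0$ under the canonical inclusion $\rho(e_0)$. For $e_a$ (degree $1$), $\sigma(e_a)$ comes from $\mathcal U_h^{(1)}\otimes B_n$ through the term $e_1\otimes\Fil^1$. Choosing these lifts defines $s_H$, and the factorization in (1) holds by construction, with $\beta_n$ possibly adjusted on the torsion-free generators.

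Finally, (2) follows from (1) by reducing modulo $v_-$ in degree $0$ and using $B^0=W$. We have $\operatorname{gr}^0(s_H\otimes B_n)$ equal to $\operatorname{gr}^0 s_H$, while $\operatorname{gr}^0$ of $c_n\circ(s\otimes B_n)$ identifies $\operatorname{gr}^0_{\mathrm H}\mathcal U_h\otimes W_n$ with $\RG(Y_n,\OO_{Y_n})$. The section $s$ induces the identity on $\operatorname{gr}^0$ by Lemma~\ref{edge:lower-split}(2), and $c_n$ on $\operatorname{gr}^0$ is the Ogus comparison $\RG(X,\OO_X)$-type term $\otimes W_n\simeq \RG(Y_n,\OO_{Y_n})$. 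So $\operatorname{gr}^0 s_H$ is an equivalence mod $p$, and derived Nakayama over $W_n$ finishes the proof.
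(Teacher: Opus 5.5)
\textbf{Verdict: there is a genuine gap in (1), at the degree-one generators $e_a$.}

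\textbf{What is fine.} Taking $\mathcal U_{H,n}$ to be the free $W_n[v_-]$-complex on $\mathbf 1,z,e_a$ and descending $\sigma$ along the unit $\mathcal H_{Y_n}\to\mathcal H_{Y_n}\otimes^{\mathbf L}_{W_n[v_-]}B_n$ by adjunction is a sound plan. Part (3) is fine. The degree-zero generators are also fine, though the reason is not ``the image of $e_0$ is $1$''. In internal degree $0$ the unit is an equivalence: $v_-$ is the identity on $\Fil^i=\Fil^0$ for $i\le 0$, so the negative tail of $T^{\epsilon_0}$ collapses onto $\Fil^0$.

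\textbf{Where it fails.} As you observed, $(\mathcal H_{Y_n}\otimes B_n)^{(1)}$ is the pushout of $\Fil^0\xleftarrow{\iota}\Fil^1\xrightarrow{p}\Fil^1$. The cofibre of the unit $\Fil^1\mathcal H_{Y_n}\to(\mathcal H_{Y_n}\otimes B_n)^{(1)}$ is $\operatorname{gr}^0\mathcal H_{Y_n}\simeq\RG(Y_n,\OO_{Y_n})$. Hence $\sigma(e_a)$ lifts to $H^d(\Fil^1\mathcal H_{Y_n})$ if and only if its image in $H^d(Y_n,\OO_{Y_n})\cong W_n$ vanishes. In the paper's minimal-model coordinates, this image is the coefficient $b_a$ of $v_+z_H$ in $s_B(e_a)$.

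Your sentence that $\sigma(e_a)$ ``comes through the $\Fil^1$ term'' is exactly the claim $b_a=0$, and nothing forces it. Up to the factor $p$, $b_a$ is the Hodge quotient of $Y_n$ evaluated on the $VD_h$-directions, which depends on $Y_n$. With $\beta_n$ the identity on generators, as you first define it, the descent fails whenever some $b_a\neq 0$. The remark that $\beta_n$ may be ``adjusted'' does not say how.

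\textbf{The missing idea.} The paper uses a change of basis. Because $q\circ s=\mathrm{id}$, $\operatorname{gr}^0_{\mathrm H}q$ is an equivalence (Lemma~\ref{edge:lower-split}(2)), and $c_n\otimes k$ is an equivalence, the coefficient $u$ of $z_H$ in $\sigma(z)$ is a unit. Replace $e_a$ by
\[
 e'_a=e_a-u^{-1}b_a\,v_+z .
\]
This basis change involves $v_+$, so it is defined over $B_n$ but not over $W_n[v_-]$. It kills the $\operatorname{gr}^0$-component of $\sigma(e_a)$. The remaining coefficients stay in $W_n[v_-]$ because $v_+v_-^{j}=pv_-^{j-1}$. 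One then takes $\mathcal U_{H,n}$ to be free on $\mathbf 1,z,e'_a$, with $\beta_n$ this change of basis. Consequently the unit-ness argument you postpone to (2) is an input to constructing $s_H$, not a consequence of (1).

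\textbf{A smaller point in (2).} $\operatorname{gr}^0(s_H\otimes B_n)$ is not $\operatorname{gr}^0 s_H$, since $(B_n/v_-B_n)^{(0)}=W_n/pW_n=k$. The comparison must therefore be made after reducing to $k$, followed by Nakayama, which is where you end up. The identification ``$\RG(X,\OO_X)\otimes W_n\simeq\RG(Y_n,\OO_{Y_n})$'' is wrong as stated and should be dropped.
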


\begin{proof}
Put $A_n=A/p^nA$.
Reducing the model \eqref{eq:lower-split-free-model} modulo
$p^n$, we obtain
\[
 \mathcal U_h\otimes_A A_n
 =
 A_n\mathbf 1
 \oplus
 \left(A_nz\oplus\bigoplus_{a=1}^{h-1}A_ne_a\right)[-d],
 \qquad d_{\mathcal U_h}=0,
\]
with grading degrees
\[
 \deg(\mathbf 1)=\deg(z)=0,
 \qquad
 \deg(e_a)=1.
\]

As in part~(3) of the proof of Lemma~\ref{edge:lower-split},
choose a bounded minimal graded free model
$M^\bullet\simeq\mathcal H_{Y_n}$ over $W_n[v_-]$.
The component of grading degree $i$ of
$M^\bullet/v_-M^\bullet$ computes
$\mathbf R\Gamma(Y_n,\Omega^i_{Y_n/W_n})[-i]$.
Derived reduction modulo $p$ therefore gives
\[
 \left(M^\bullet\otimes_{W_n[v_-]}k\right)^{(i)}
 \simeq
 \mathbf R\Gamma(X,\Omega_X^i)[-i]
 \qquad(0\leq i\leq d),
\]
and zero for all other $i$.
By minimality, $M^\bullet\otimes_{W_n[v_-]}k$ has zero
differential. The preceding comparison therefore gives
\[
 \begin{gathered}
 \left(M^q\otimes_{W_n[v_-]}k\right)^{(i)}=0
 \qquad(i<0),\\
 \left(M^q\otimes_{W_n[v_-]}k\right)^{(0)}
 \simeq H^q(X,\mathcal O_X)
 \simeq
 \begin{cases}
 k,&q=0,d,\\
 0,&q\notin\{0,d\}.
 \end{cases}
 \end{gathered}
\]
Thus the only homogeneous generators of grading degree zero are
\[
 \mathbf 1_H\in M^0,\qquad z_H\in M^d,
 \qquad
 \deg(\mathbf 1_H)=\deg(z_H)=0,
\]
and all other homogeneous generators have grading degree
at least one.

Choose a homogeneous $W_n[v_-]$-basis
$\{z_H\}\cup\{y_\lambda\}_{\lambda\in\Lambda}$ of $M^d$,
where $\Lambda$ is a finite index set.
Put $j_\lambda:=\deg(y_\lambda)\geq1$. Thus
\[
 M^d
 =
 W_n[v_-]z_H
 \oplus
 \bigoplus_{\lambda\in\Lambda}W_n[v_-]y_\lambda,
 \qquad
 y_\lambda\in(M^d)^{(j_\lambda)}.
\]

Define
\[
 s_B:=c_n\circ\bigl(s\otimes_A^{\mathbf L}B_n\bigr):
 \mathcal U_h\otimes_A^{\mathbf L}B_n
 \longrightarrow
 \mathcal H_{Y_n}\otimes_{W_n[v_-]}^{\mathbf L}B_n.
\]
Using the chosen free models, represent this morphism by
a chain map, also denoted by $s_B$,
\[
 s_B:
 B_n\mathbf 1
 \oplus
 \left(B_nz\oplus\bigoplus_{a=1}^{h-1}B_ne_a\right)[-d]
 \longrightarrow
 M^\bullet\otimes_{W_n[v_-]}B_n.
\]
By \eqref{eqn:Bdegree},
\[
 B_n^1=W_nv_+,
 \qquad
 B_n^{-r}=W_nv_-^r\quad(r\geq0).
\]
Since $s_B$ preserves grading degree, its components have
the form
\[
 \begin{aligned}
 s_B(\mathbf 1)&=a_0\mathbf 1_H,\\
 s_B(z)&=uz_H+\sum_{\lambda\in\Lambda}
       \alpha_\lambda v_-^{j_\lambda}y_\lambda,\\
 s_B(e_a)&=b_av_+z_H+\sum_{\lambda\in\Lambda}
       \gamma_{a\lambda}v_-^{j_\lambda-1}y_\lambda,
 \end{aligned}
\]
with $a_0,u,b_a,\alpha_\lambda,\gamma_{a\lambda}\in W_n$.
The section condition gives
\[
 \operatorname{gr}^0(q\bmod p)\circ
 \operatorname{gr}^0(s\bmod p)=\mathrm{id}.
\]
Together with the equivalence
$\operatorname{gr}^0(q\bmod p)$ (Lemma~\ref{edge:lower-split}), this gives $\overline a_0,\overline u\in k^\times$. Hence we have $a_0,u\in W_n^\times.$

Change the $A_n$-basis by
\[
 e'_a=e_a-u^{-1}b_av_+z.
\]
Then
\[
 \begin{aligned}
 s_B(e'_a)
 &=
 \sum_{\lambda\in\Lambda}
 \left(
 \gamma_{a\lambda}v_-^{j_\lambda-1}
 -
 u^{-1}b_a\alpha_\lambda v_+v_-^{j_\lambda}
 \right)y_\lambda\\
 &=
 \sum_{\lambda\in\Lambda}
 \left(\gamma_{a\lambda}
       -pu^{-1}b_a\alpha_\lambda\right)
 v_-^{j_\lambda-1}y_\lambda
 \in M^d.
 \end{aligned}
\]
Define
\[
 \mathcal U_{H,n}
 =
 W_n[v_-]\mathbf 1
 \oplus
 \left(
 W_n[v_-]z
 \oplus\bigoplus_{a=1}^{h-1}W_n[v_-]e'_a
 \right)[-d],
 \qquad d_{\mathcal U_{H,n}}=0,
\]
\[
 \begin{aligned}
 s_H(\mathbf 1)&=s_B(\mathbf 1),&
 s_H(z)&=s_B(z),&
 s_H(e'_a)&=s_B(e'_a),\\
 \beta_n(\mathbf 1)&=\mathbf 1,&
 \beta_n(z)&=z,&
 \beta_n(e'_a)&=e_a-u^{-1}b_av_+z.
 \end{aligned}
\]
The change of basis is invertible, so $\beta_n$ is an
isomorphism. Moreover,
since $W_n[v_-]\hookrightarrow B_n$ induces $M^r\hookrightarrow M^r\otimes_{W_n[v_-]}B_n$, 
\[
 (d_Ms_H)\otimes_{W_n[v_-]}B_n
 =
 d_{M\otimes B_n}s_B\beta_n=0
\]
implies $d_Ms_H=0.$
Thus $s_H$ is a chain map, and
$
 s_H\otimes_{W_n[v_-]}B_n=s_B\circ\beta_n$ as desired.

\smallskip\noindent
\textit{(2)}
Using the generators of grading degree zero chosen above,
we have
\[
 \operatorname{gr}^0(s_H):
 W_n\mathbf 1\oplus W_nz[-d]
 \xrightarrow{\left(\begin{smallmatrix}a_0&0\\0&u\end{smallmatrix}\right)}
 W_n\mathbf 1_H\oplus W_nz_H[-d].
\]
Since $a_0,u\in W_n^\times$, this is an isomorphism.

\smallskip\noindent
\textit{(3)}
The grading degrees
\[
 \deg(\mathbf 1)=\deg(z)=0,
 \qquad
 \deg(e'_a)=1
\]
give
\[
 \Fil^2\mathcal U_{H,n}=0,
 \qquad
 \Fil^1\mathcal U_{H,n}
 =
 \left(\bigoplus_{a=1}^{h-1}W_ne'_a\right)[-d]
 \simeq W_n^{\oplus(h-1)}[-d].
\]
\end{proof}

\subsection{A matrix calculation}

\begin{lemma}\label{edge:kernel}
Let $V$ and $U$ be finite free $W_n$-modules.
Fix direct sum decompositions
\[
 V=\bigoplus_{i=1}^c V_i,
 \qquad
 U=\bigoplus_{j=1}^m U_j,
 \qquad c\geq2,\quad m\geq1,
\]
where each $V_i$ and $U_j$ is finite free over $W_n$;
zero summands are allowed.
Let $H:V\to U$, and write $H_{ji}:V_i\to U_j$ for its components.
Assume that $H_{ji}=0$ whenever $j<i$.
Define $H_c:V\to U$ by
\[
 (H_c)_{ji}=
 \begin{cases}
 p^{\min(j,c)-i}H_{ji},&j\geq i,\\
 0,&j<i.
 \end{cases}
\]
If
\[
 \length_{W_n}\ker H_c=\length_{W_n}\ker H,
\]
then the projection $V\to V_1$ induces a surjection
\[
 \ker H\longrightarrow
 \ker(H_{11}:V_1\to U_1).
\]
\end{lemma}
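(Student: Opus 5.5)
The plan is to compare $H$ and $H_c$ through diagonal rescalings that intertwine them, and then turn the equality of kernel lengths into the surjectivity statement by a length count. Let $\varphi,\varphi'\colon V\to V$ act on $V_i$ by multiplication by $p^{i-1}$ and $p^{c-i}$, with $\varphi'$ taken to be $0$ on $V_i$ for $i>c$. Let $\psi,\psi'\colon U\to U$ act on $U_j$ by $p^{\min(j,c)-1}$ and $p^{c-\min(j,c)}$. A direct check on components, using $H_{ji}=0$ for $j<i$, should give
\[
 H_c\circ\varphi=\psi\circ H,\qquad H\circ\varphi'=\psi'\circ H_c .
\]
Hence $\varphi$ maps $\ker H$ into $\ker H_c$, and $\varphi'$ maps $\ker H_c$ into $\ker H$.

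Both $\varphi$ and $\varphi'$ act on the first summand by $1$ and $p^{c-1}$ respectively. The key structural observations about $H_c$ are these. Its first row is $(H_{11},0,\dots,0)$, so the first coordinate of any $x\in\ker H_c$ lies in $\ker H_{11}$. Its first column below the diagonal is divisible by $p$. More generally, every entry with $j>i$ and $i<c$ carries a positive power of $p$. So $H_c$ is ``closer to block diagonal'' than $H$, and one expects $\ker H_c$ to be at least as large as $\ker H$, with equality only when nothing is lost in passing from $\ker H_{11}$ to the first coordinates of $\ker H$.

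To make this precise, I would set $V'=\bigoplus_{i\geq2}V_i$ and $U'=\bigoplus_{j\geq2}U_j$. Lower triangularity means that $H$ and $H_c$ restrict to maps $H',H'_c\colon V'\to U'$ of the same shape, with respect to the index set shifted by one. There are exact sequences
\[
 0\to\ker H'\to\ker H\xrightarrow{\operatorname{pr}_1}\ker H_{11},
 \qquad
 0\to\ker H'_c\to\ker H_c\xrightarrow{\operatorname{pr}_1}\ker H_{11}.
\]
These give
\[
 \length\ker H=\length\ker H'+\length\operatorname{pr}_1(\ker H)
\]
and the analogous formula for $H_c$. The plan is then to prove two inequalities:
\[
 \length\ker H'_c\geq\length\ker H',
 \qquad
 \length\operatorname{pr}_1(\ker H_c)\geq\length\ker H_{11}.
\]
Combined with the hypothesis $\length\ker H_c=\length\ker H$, the two formulas force $\length\operatorname{pr}_1(\ker H)=\length\ker H_{11}$, which is the required surjectivity.

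For the first inequality, I would argue by induction on the number of blocks, comparing $H'$ with $H'_c$. The latter is a rescaling of $H'$ of the same type, up to an overall factor $p^{-1}$ that is harmless after the diagonal rescalings above. Failing that, I would use a direct Smith-normal-form comparison: $\length\ker$ over $W_n$ equals $\length\coker$ for square matrices, and in general $\length\ker=\length V-\length U+\length\coker$. Multiplying entries by nonnegative powers of $p$ in this triangular pattern should not decrease the cokernel length.

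The step I expect to be the main obstacle is the second inequality, namely producing, for each $y\in\ker H_{11}$, enough elements of $\ker H_c$ with first coordinate near $y$. Here I would use the $p$-divisibility of the first column of $H_c$. One wants to solve $H'_c x'=-(p^{\min(j,c)-1}H_{j1}y)_{j\geq2}$ for $x'\in V'$, by peeling off one factor of $p$ from each row and inverting a rescaled triangular system. If this solvability fails in general, I would instead replace exact equality by the length estimate
\[
 \length\ker H_c-\length\ker H\;\geq\;\length\ker H_{11}-\length\operatorname{pr}_1(\ker H),
\]
obtained from the intertwiners $\varphi,\varphi'$ together with $\varphi'\varphi=p^{c-1}$ on $V_1$. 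This weaker estimate already suffices for the conclusion.
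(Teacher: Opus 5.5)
There is a genuine gap, at exactly the step you flag as the main obstacle. The inequality $\length\operatorname{pr}_1(\ker H_c)\geq\length\ker H_{11}$ (equivalently $\operatorname{pr}_1(\ker H_c)=\ker H_{11}$) is false in general, and so is your fallback estimate. Take $c=m=2$, $V_1=V_2=U_1=U_2=W_n$ with $n\geq2$, and $H_{11}=0$, $H_{21}=1$, $H_{22}=0$. Then $(H_c)_{21}=p$, so $\ker H=0\oplus W_n$ has length $n$, while $\ker H_c=p^{n-1}W_n\oplus W_n$ has length $n+1$. Thus $\operatorname{pr}_1(\ker H_c)=p^{n-1}W_n\neq W_n=\ker H_{11}$. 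Moreover $\length\ker H_c-\length\ker H=1$, whereas $\length\ker H_{11}-\length\operatorname{pr}_1(\ker H)=n$. The intertwiners cannot rescue this. The map $\varphi$ only gives $\operatorname{pr}_1(\ker H)\subseteq\operatorname{pr}_1(\ker H_c)$, and $\varphi'$ multiplies $V_1$ by $p^{c-1}$, so it only sees $p^{c-1}\ker H_{11}$. That inclusion from $\varphi$ is in fact what closes your induction for the first inequality. The Smith-normal-form heuristic alone would not do it: without triangularity, multiplying an entry by $p$ can shrink the cokernel, e.g.\ $\left(\begin{smallmatrix}1&1\\1&1\end{smallmatrix}\right)$ versus $\left(\begin{smallmatrix}1&1\\p&1\end{smallmatrix}\right)$.

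The missing idea is that the correct defect is $\length I-\length pI=\length(I/pI)$, not $\length I$. Write $b\colon V_1\to U_{\geq2}$ and $e\colon V_{\geq2}\to U_{\geq2}$ for the lower blocks of $H$, and $b_c,e_c$ for those of $H_c$. Let $\bar b\colon\ker H_{11}\to\coker e$ be induced by $b$, and put $I=\im\bar b$. Then $\operatorname{pr}_1(\ker H)=\ker\bar b$ and $\length I=\length\ker H_{11}-\length\operatorname{pr}_1(\ker H)$. Now suppose $y\in\ker H_{11}$ and $pb(y)=e(x)$. Then $x''_i:=p^{i-2}x_i$ satisfies $e_c(x'')=b_c(y)$, because $\min(j,c)\geq2$ for $j\geq2$. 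Hence $\operatorname{pr}_1(\ker H_c)\supseteq\ker(p\bar b)$. Combine this with your decomposition and with $\length\ker e\leq\length\ker e_c$. This gives $\length\ker H_c-\length\ker H\geq\length\ker(p\bar b)-\length\ker\bar b=\length I-\length pI$. Under the hypothesis this forces $I=pI$, so $I=0$ by Nakayama. That is, $\bar b=0$, which is exactly the required surjectivity.

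The paper obtains the same defect formula differently. It compares $H$ with the matrix in which only the blocks $H_{j1}$, $j\geq2$, are multiplied by $p$; the maps of the two kernels onto $\ker\bar b$ and $\ker(p\bar b)$ are surjective with common kernel $\ker e$. It then reaches $H_c$ by multiplying by $p$ the lower-left block of each splitting $V_{\leq r}\oplus V_{>r}$, for $r=2,\dots,c-1$. None of these steps decreases the kernel length.
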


\begin{proof}
Put
\[
 V_{\geq2}=\bigoplus_{i=2}^c V_i,
 \qquad
 U_{\geq2}=\bigoplus_{j=2}^m U_j.
\]
Relative to $V=V_1\oplus V_{\geq2}$ and
$U=U_1\oplus U_{\geq2}$, write
\[
 H=\begin{pmatrix}a&0\\ b&e\end{pmatrix},
 \qquad
 H'=\begin{pmatrix}a&0\\ pb&e\end{pmatrix},
\]
where $a=H_{11}$.
An element $x\in\ker a$ lifts to $\ker H$ precisely when
$b(x)$ vanishes in $\coker e$.
Thus, if
\[
 \overline b:\ker a\longrightarrow\coker e,
 \qquad
 I=\im\overline b,
\]
there are exact sequences
\[
 \begin{aligned}
 0&\longrightarrow\ker e\longrightarrow\ker H
   \longrightarrow\ker\overline b\longrightarrow0,\\
 0&\longrightarrow\ker e\longrightarrow\ker H'
   \longrightarrow\ker(p\overline b)\longrightarrow0.
 \end{aligned}
\]
Consequently,
\begin{equation}
\label{eqn:HH'}
 \length_{W_n}\ker H'-\length_{W_n}\ker H
 =\length_{W_n}I-\length_{W_n}pI\geq0.
\end{equation}

We next compare $H'$ with $H_c$ by repeating the same
block operation.
Put $H^{[0]}:=H$.
For $1\leq r\leq c-1$, define $H^{[r]}:V\to U$ by
\[
 (H^{[r]})_{ji}
 =
 \begin{cases}
 p(H^{[r-1]})_{ji},&i\leq r<j,\\
 (H^{[r-1]})_{ji},&\text{otherwise}.
 \end{cases}
\]
For each such $r$, put
\[
 \begin{aligned}
 V_{\leq r}&:=\bigoplus_{i\leq r}V_i,
 &V_{>r}&:=\bigoplus_{i>r}V_i,\\
 U_{\leq r}&:=\bigoplus_{j\leq r}U_j,
 &U_{>r}&:=\bigoplus_{j>r}U_j.
 \end{aligned}
\]
Empty direct sums are understood to be zero.
Since $(H^{[r-1]})_{ji}=0$ whenever $j<i$,
the decompositions
\[
 V=V_{\leq r}\oplus V_{>r},
 \qquad
 U=U_{\leq r}\oplus U_{>r}
\]
give block presentations
\[
 H^{[r-1]}
 =
 \begin{pmatrix}
 a_r&0\\
 b_r&e_r
 \end{pmatrix},
 \qquad
 H^{[r]}
 =
 \begin{pmatrix}
 a_r&0\\
 pb_r&e_r
 \end{pmatrix}.
\]
Applying \eqref{eqn:HH'} to these block presentations gives
\[
 \length_{W_n}\ker H^{[r-1]}
 \leq
 \length_{W_n}\ker H^{[r]}
 \qquad(1\leq r\leq c-1).
\]

By construction, $H^{[1]}=H'$.
In passing from $H^{[0]}=H$ to $H^{[c-1]}$,
each $(j,i)$-block with $j\geq i$ is multiplied by $p$
once for each integer $r$ satisfying $1\leq r<c$ and
$i\leq r<j$. Since
\[
 \#\{r:1\leq r<c,\ i\leq r<j\}
 =\min(j,c)-i,
\]
we obtain $H^{[c-1]}=H_c$.
The successive kernel-length inequalities give
\refstepcounter{equation}
\[
 \length_{W_n}\ker H
 \leq \length_{W_n}\ker H'
 \leq \length_{W_n}\ker H_c.
 \tag{\theequation}\label{eqn:kernel-length-ineq}
\]

By the hypothesis, both inequalities are equalities.
Equation~\eqref{eqn:HH'} therefore gives
$\length_{W_n}I=\length_{W_n}pI$, so $I=pI$.
Thus $I=0$ and hence $\overline b=0$, so every element of
$\ker a=\ker H_{11}$ lifts to $\ker H$, as required.
\end{proof}

The next proposition establishes over finite Witt rings the
analogue of the first Hodge edge surjectivity assumed in
\cite[Theorem~7.16(3)]{BT}. It supplies the edge hypothesis of
Proposition~\ref{tower:extension}.

\begin{proposition}\label{edge:finite}
Suppose that
\[
 H^{d-1-i}(X,\Omega_X^i)=0\qquad(p<i\leq d-1).
\]
For every smooth proper lift $Y/W_{n+1}$, the first Hodge edge
\[
 H^{d-1}\bigl(Y_n,\Fil^1\Omega^\bullet_{Y_n/W_n}\bigr)
 \longrightarrow H^{d-2}(Y_n,\Omega^1_{Y_n/W_n})
\]
is surjective. In particular, this holds if $d\leq p+1$, or if
$d=p+2$ and $H^0(X,T_X)=0$.
\end{proposition}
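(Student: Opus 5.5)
The plan is to read off the surjectivity of the edge map from a length count on a minimal graded free model of $\mathcal H_{Y_n}$, and then apply Lemma~\ref{edge:kernel}.

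First I choose, as in part~(3) of the proof of Lemma~\ref{edge:lower-split} and in the proof of Lemma~\ref{edge:descent}, a bounded minimal graded free model $M^\bullet\simeq\mathcal H_{Y_n}$ over $W_n[v_-]$. Its generators of grading degree $i$ in cohomological degree $q$ are counted by $H^{q-i}(X,\Omega^i_X)$. By minimality, the component of $d_M$ from a generator of weight $i$ to a generator of weight $j$ vanishes unless $j\geq i$. When it does not vanish, it has the form $v_-^{j-i}H_{ji}$ with $H_{ji}$ over $W_n$.

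The edge map is surjective exactly when every class in $H^{d-2}(Y_n,\Omega^1)$, i.e.\ in the kernel of the weight-one block $H_{11}$ in cohomological degree $d-1$, lifts to a cocycle of $\Fil^1M$. In terms of the block matrix $H=(H_{ji})$ from $V=\bigoplus_i V_i$ (degree $d-1$ generators of weight $i\geq1$, i.e.\ $V_i\leftrightarrow H^{d-1-i}(X,\Omega^i)$) to $U$ (degree $d$ generators), this is exactly the surjectivity of $\ker H\to\ker H_{11}$ in Lemma~\ref{edge:kernel}. Here I first strip off the weight-zero generators $\mathbf 1_H,z_H$, using the summand $s_H$ of Lemma~\ref{edge:descent}. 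That summand is split with $\Fil^2\mathcal U_{H,n}=0$ and $\operatorname{gr}^0(s_H)$ an isomorphism, so the complement is generated in weights $\geq1$. The weight-one part of $\mathcal U_{H,n}$ contributes only in cohomological degree $d$ and is harmless.

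Next I base change to $B_n$ and look at a fixed large internal degree $j$. By \eqref{eqn:Bdegree}, $B^s=A^s$ for $s<p$, and $v_-b_s=p^{\delta_s}b_{s-1}$ with $\delta_s=1$ for $1\leq s<p$ up to the first jump. Hence in degree $j$ the block $v_-^{j-i}H_{ji}$ becomes $p^{\eta(j-\cdot)\text{-difference}}H_{ji}$. For indices $i,j\leq p$ this is $p^{\min(j,c)-i}H_{ji}$ with $c=p$ (weights shifted by one). The hypothesis $H^{d-1-i}(X,\Omega^i)=0$ for $p<i\leq d-1$ says $V_i=0$ beyond $c$, so only the truncation $\min(j,c)$ matters. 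The conclusion is that the degree-$j$ part of $(M\otimes B_n)$ in cohomological degree $d-1$ has kernel length $\length\ker H_c$, while the $v_-$-inverted (degree $\leq0$) part has kernel length $\length\ker H$.

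The key step, which I expect to be the main obstacle, is the length equality $\length\ker H_c=\length\ker H$. For it I would use Lemma~\ref{edge:comparison}: both lengths are computed on the Nygaard side $(\mathcal N_X\otimes_A^{\mathbf L}B)\otimes W_n$, which depends only on $X$. There I use the splitting $\mathcal N_X\simeq\mathcal U_h\oplus\mathcal E_0$ of Lemma~\ref{edge:lower-split}. The part $\mathcal E_0$ is generated in weights $\geq1$ and carries the Frobenius $p\varphi_E$. I would try to show that multiplication by $v_+$ (equivalently the divided Frobenius $\varphi_E$) identifies the cohomology of the degree-$j$ part with that of the degree-$0$ part, compatibly with reduction mod $p^n$, so that the two lengths agree. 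Concretely, I would compare both with $\length H^{d-1}(E_0\otimes^{\mathbf L}W_n)$ and then transport back along $c_n$ and $\beta_n$ from Lemma~\ref{edge:descent}.

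Once equality holds, Lemma~\ref{edge:kernel} gives the lift, hence the surjectivity. Finally, for the particular cases: if $d\leq p+1$, the index range $p<i\leq d-1$ is empty, so the hypothesis is vacuous. If $d=p+2$, the only condition is $H^0(X,\Omega^{d-1}_X)=0$, and $\Omega^{d-1}_X\simeq T_X$ because $\omega_X\simeq\OO_X$.
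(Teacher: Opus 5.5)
Your overall architecture is the paper's: minimal graded free models, the block matrix of $d^{d-1}$ whose source weights lie in $[1,p]$ by the vanishing hypothesis, the identification of the internal-degree-$p$ part with $H_c$ for $c=p$ and of the $v_-=1$ part with $H$, and then Lemma~\ref{edge:kernel}. However, the step you flag yourself, $\length\ker H_c=\length\ker H$, is a genuine gap. The mechanism you propose for it fails in exactly the new cases.

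\textbf{Why the proposed mechanism fails.} Lemma~\ref{edge:comparison} is an equivalence over $B$, and $B$ agrees with $A$ only in degrees $<p$. So you must work in internal degree exactly $p$, not in a ``large'' degree $J$: there the Hodge-side exponents are $\eta(J-i)-\eta(J-j)$, not $\min(j,p)-i$.

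In internal degree $p$, however, the Nygaard-side complex is not the plus endpoint. A generator $e$ of weight $r>p$ contributes $Wv_-^{r-p}e$, which $\rho_{+,p}$ sends to $p^{r-p}e$. Such generators always occur in cohomological degree $d$ once $d\geq p+1$; for instance $r_{d,d}=h^0(X,\Omega^d_X)=1$. Likewise, $v_+^p$ from internal degree $0$ to $p$ is multiplication by $p^{\min(r,p)}$ on a weight-$r$ generator.

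Hence neither $v_+$ nor the divided Frobenius in degree $p$ (which factors through $\rho_{+,p}$) identifies the degree-$p$ cohomology with the degree-$0$ cohomology when $d\in\{p+1,p+2\}$. Your route only goes through when all weights are $\leq p$, i.e.\ when $d\leq p$.

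\textbf{The missing idea: a sandwich, not a direct identification.} The proof of Lemma~\ref{edge:kernel} already gives $\length\ker H\leq\length\ker H_c$ (see \eqref{eqn:kernel-length-ineq}). So only the reverse inequality is needed, and the Nygaard side supplies exactly that much.
\begin{itemize}
\item Take a minimal model $M^\bullet$ of the $F$-gauge $\mathcal E=\cofib(\mathcal U_h\to\mathcal N_X)$; all its generators have positive weight.
\item Because source weights in degree $d-1$ are $\leq p$, the matrices satisfy \eqref{eqn:N+Np}: $N_+$ is $N_p$ followed by a diagonal matrix. This gives only the inclusion $\ker N_p\subseteq\ker N_+$.
\item Frobenius gluing gives $\length\ker N_+=\length\ker N_-$.
\item The comparisons in internal degree $p$ (where $A^{(s)}\cong B^{(s)}$ for $s<p$) and at $v_-=1$ give $\length\ker N_p=\length\ker H_p$ and $\length\ker N_-=\length\ker H_-$. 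Here one uses that kernel lengths of $d^{d-1}$ are invariant under quasi-isomorphism of bounded complexes of finite free $W_n$-modules of equal ranks.
\end{itemize}
The chain
\[
\length\ker H_p=\length\ker N_p\leq\length\ker N_+=\length\ker N_-=\length\ker H_-\leq\length\ker H_p
\]
then forces equality. Note that the equality on the Nygaard side is an output of this argument, not something you can prove first.

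\textbf{A smaller point.} Lemma~\ref{edge:descent} does not make $s_H$ split over $W_n[v_-]$, only after base change to $B_n$. You should therefore work with the cofibre $\mathcal E_{H,n}$ and return to $\mathcal H_{Y_n}$ by a diagram chase. That chase uses $H^{d-1}(W_n^{\oplus(h-1)}[-d])=0$ and the fact that $\Fil^1\mathcal U_{H,n}\to\operatorname{gr}^1\mathcal U_{H,n}$ is the identity.
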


\begin{proof}
Using the morphisms in Lemmas~\ref{edge:lower-split} and
\ref{edge:descent}, form the paired cofibres
\[
 \mathcal E=\cofib(\mathcal U_h\longrightarrow\mathcal N_X),
 \qquad
 \mathcal E_{H,n}
 =\cofib(\mathcal U_{H,n}\longrightarrow\mathcal H_{Y_n}).
\]
By Lemmas~\ref{edge:comparison} and~\ref{edge:descent}, we have
\begin{equation}
\label{eqn:Ecomparison}
     \mathcal E\otimes_A^{\mathbf L}B_n
 \simeq
 \mathcal E_{H,n}
 \otimes_{W_n[v_-]}^{\mathbf L}B_n.
\end{equation}

By \cite[Remark~3.3.6 and Lemma~3.4.11(1)]{BhattFG},
$\mathcal N_X$ is represented by a perfect graded $A$-complex.
Since $Y_n/W_n$ is smooth and proper, the complexes
$\RG(Y_n,\Omega^i_{Y_n/W_n})$ are perfect over $W_n$.
The Rees complex $\mathcal H_{Y_n}$ is perfect over $W_n[v_-]$,
since it is a finite iterated extension of grading shifts of
the perfect complexes
\[
 W_n[v_-]\otimes_{W_n}^{\mathbf L}
 \RG(Y_n,\Omega^i_{Y_n/W_n})[-i].
\]
Together with Lemmas~\ref{edge:lower-split} and
\ref{edge:descent}, this shows that $\mathcal E$ and
$\mathcal E_{H,n}$ are perfect over $A$ and $W_n[v_-]$,
respectively.

The rings $A$ and $W_n[v_-]$ are graded local, with homogeneous
maximal ideals
\[
 \mathfrak m_A=(v_-,v_+),
 \qquad
 \mathfrak m_H=(p,v_-).
\]
By \cite[Lemma~4.10(ii)]{DellAmbrogioStevenson2013},
choose minimal graded free resolutions
\[
 M^\bullet\simeq\mathcal E,
 \qquad
 M_H^\bullet\simeq\mathcal E_{H,n},
\]
over $A$ and $W_n[v_-]$, respectively. 
Thus each term is finite
graded free and
\[
 d_M(M^q)\subseteq\mathfrak m_A M^{q+1},
 \qquad
 d_{M_H}(M_H^q)\subseteq\mathfrak m_H M_H^{q+1}.
\]
These resolutions are bounded by finite Tor amplitude
and graded Nakayama. 
Moreover, we have
\[
 d_{M^\bullet\otimes_A k}=0,
 \qquad
 d_{M_H^\bullet\otimes_{W_n[v_-]}k}=0.
\]

Base change of 
\eqref{eqn:Ecomparison}
along $B_n\to k$ gives
\[
 M^\bullet\otimes_A k
 \simeq
 M_H^\bullet\otimes_{W_n[v_-]}k.
\]

Recall that (see \eqref{eqn:Nygaard's Hodge})
\[
 \left(\mathcal N_X\otimes_A^{\mathbf L}k\right)^{(i)}
 \simeq
 \RG(X,\Omega_X^i)[-i].
\]
Since $q\circ s=\operatorname{id}_{\mathcal U_h}$, we have
\[
 \mathcal N_X\otimes_A^{\mathbf L}k
 \simeq
 \left(\mathcal U_h\otimes_A^{\mathbf L}k\right)
 \oplus
 \left(M^\bullet\otimes_A k\right).
\]
Moreover, Lemma~\ref{edge:lower-split} gives
\[
 \mathcal U_h\otimes_A^{\mathbf L}k
 \simeq
 k\mathbf 1
 \oplus
 \left(kz\oplus\bigoplus_{a=1}^{h-1}ke_a\right)[-d],
 \qquad
 \deg(\mathbf 1)=\deg(z)=0,\quad \deg(e_a)=1.
\]
Consequently, the number $r_{q,i}$ of homogeneous generators
of grading degree $i$ in either $M^q$ or $M_H^q$ is
\begin{equation}
\label{eqn:rqiformula}
 \begin{aligned}
 r_{q,i}
 &:=
 \dim_k(M^q\otimes_A k)^{(i)}
 =
 \dim_k(M_H^q\otimes_{W_n[v_-]}k)^{(i)}\\
 &=
 h^{q-i}(X,\Omega_X^i)
 -\delta_{i,0}(\delta_{q,0}+\delta_{q,d})
 -(h-1)\delta_{i,1}\delta_{q,d},
 \end{aligned}
\end{equation}
where $\delta_{a,b}$ is the Kronecker delta and
$\Omega_X^i:=0$ for $i\notin[0,d]$.
By \eqref{eqn:rqiformula} and the strict Calabi--Yau
assumptions, we have
\[
 r_{q,i}=0\qquad(i\leq0).
\]
Since $d\geq3$, the subtraction terms vanish for $q=d-1$,
so
\[
 r_{d-1,i}=h^{d-1-i}(X,\Omega_X^i).
\]
The vanishing hypothesis therefore gives
\[
 r_{d-1,i}=0\qquad(i\notin[1,p]).
\]
This upper bound on the grading degrees in cohomological
degree $d-1$ is where the vanishing hypothesis enters the argument.
It is used in the matrix identities
\eqref{eqn:N+Np} and \eqref{eq:edge-Hp-rescaling} below,
and in applying Lemma~\ref{edge:kernel} with $c=p$.

Every homogeneous generator of $M^\bullet$ and
$M_H^\bullet$ has positive grading degree, and every
homogeneous generator of $M^{d-1}$ and $M_H^{d-1}$
has grading degree in $[1,p]$.

Put
\[
 M_-^\bullet:=M^\bullet|_{v_-=1},
 \qquad
 M_+^\bullet:=M^\bullet|_{v_+=1}.
\]
Choose homogeneous bases of $M^{d-1},M^d$ and
$M_H^{d-1},M_H^d$, with the same grading degrees
$i_\alpha,j_\beta$ in cohomological degrees $d-1,d$,
respectively. Recall that $1\leq i_\alpha\leq p$.
At the endpoints, use the specializations of these bases.
For a homogeneous basis vector $e$ of grading degree $r$,
the corresponding free summand after extension to $A/p^nA$
satisfies
\refstepcounter{equation}
\[
 \bigl((A/p^nA)e\bigr)^{(p)}
 =
 (A/p^nA)^{(p-r)}e
 =
 \begin{cases}
 W_n v_+^{p-r}e,&r\leq p,\\
 W_n v_-^{r-p}e,&r>p.
 \end{cases}
 \tag{\theequation}\label{eq:degree-p-basis}
\]
Use the displayed generators as the bases in homogeneous
degree $p$.

With respect to these bases, let $N_\pm$, $N_p$, $H_-$,
and $H_p$ denote the matrices of $d^{d-1}$ on
$M_\pm^\bullet\otimes_W W_n$,
$(M^\bullet\otimes_W W_n)^{(p)}$,
$M_H^\bullet|_{v_-=1}$, and
$(M_H^\bullet\otimes_{W_n[v_-]}(A/p^nA))^{(p)}$,
respectively.

Let $e_\alpha$ and $f_\beta$ be the source and target
basis vectors of grading degrees $i_\alpha$ and $j_\beta$,
respectively. Denote the degree-$p$ basis vector associated
with $f_\beta$ by $f_{\beta,p}$.
Since $i_\alpha\leq p$, the source basis vector in degree $p$
is $v_+^{p-i_\alpha}e_\alpha$.
For $j_\beta\geq i_\alpha$, the relation $v_-v_+=p$ gives
\[
 \begin{aligned}
 v_+^{p-i_\alpha}v_-^{j_\beta-i_\alpha}f_\beta
 &=
 \begin{cases}
 p^{j_\beta-i_\alpha}v_+^{p-j_\beta}f_\beta,
     &j_\beta\leq p,\\
 p^{p-i_\alpha}v_-^{j_\beta-p}f_\beta,
     &j_\beta>p
 \end{cases}\\
 &=p^{\min(j_\beta,p)-i_\alpha}f_{\beta,p}.
 \end{aligned}
\]
For $j_\beta\geq i_\alpha$, if the coefficient of $f_\beta$
in $d_M^{d-1}(e_\alpha)$ is $a v_-^{j_\beta-i_\alpha}$,
then, writing $a$ also for its image in $W_n$, we have
\[
 (N_-)_{\beta\alpha}=a,\qquad
 (N_+)_{\beta\alpha}=a p^{j_\beta-i_\alpha},\qquad
 (N_p)_{\beta\alpha}=a p^{\min(j_\beta,p)-i_\alpha}.
\]
For $j_\beta<i_\alpha$, an entry $a v_+^{i_\alpha-j_\beta}$
gives
\[
 (N_+)_{\beta\alpha}=(N_p)_{\beta\alpha}=a.
\]
Since $j_\beta<i_\alpha\leq p$, we also have
$\max(j_\beta-p,0)=0$.
Consequently,
\begin{equation}
\label{eqn:N+Np}
 (N_+)_{\beta\alpha}
 =
 p^{\max(j_\beta-p,0)}(N_p)_{\beta\alpha}.
\end{equation}

On the Hodge side, an entry $b v_-^{j_\beta-i_\alpha}$
with $j_\beta\geq i_\alpha$ specializes to $b$ at $v_-=1$.
If $j_\beta<i_\alpha$, the corresponding coefficient of
$d_{M_H}^{d-1}$ would have positive grading degree
$i_\alpha-j_\beta$, so it vanishes because $W_n[v_-]$
has no positive graded pieces.
For $j_\beta\geq i_\alpha$, the same degree-$p$ basis calculation
as on the Nygaard side applies.
Together with the vanishing for $j_\beta<i_\alpha$, this gives
\refstepcounter{equation}
\[
 \begin{aligned}
 (H_p)_{\beta\alpha}
 &=p^{\min(j_\beta,p)-i_\alpha}(H_-)_{\beta\alpha}
 &&(j_\beta\geq i_\alpha),\\
 (H_-)_{\beta\alpha}
 &=(H_p)_{\beta\alpha}=0
 &&(j_\beta<i_\alpha).
 \end{aligned}
 \tag{\theequation}\label{eq:edge-Hp-rescaling}
\]
Write $e_\alpha$ and $f_\beta$ for the chosen homogeneous
basis vectors of $M_H^{d-1}$ and $M_H^d$, respectively,
with grading degrees $i_\alpha$ and $j_\beta$.
Put
\[
 \begin{aligned}
 V_i&:=
 \bigoplus_{\alpha:\,i_\alpha=i}
 W_n(e_\alpha|_{v_-=1})
 &&(1\leq i\leq p),\\
 U_j&:=
 \bigoplus_{\beta:\,j_\beta=j}
 W_n(f_\beta|_{v_-=1})
 &&(1\leq j\leq d).
 \end{aligned}
\]
By \eqref{eqn:rqiformula} and $1\leq i_\alpha\leq p$,
these give decompositions
\[
 M_H^{d-1}|_{v_-=1}=\bigoplus_{i=1}^{p}V_i,
 \qquad
 M_H^d|_{v_-=1}=\bigoplus_{j=1}^{d}U_j.
\]

By \eqref{eq:degree-p-basis}, the degree-$p$ components
involve only coefficient degrees $p-r<p$, since the
chosen homogeneous basis vectors of $M^q$ and $M_H^q$
have grading degrees $r\geq1$.
Thus \eqref{eqn:Bdegree} gives
\[
 (A/p^nA)^{(p-r)}
 \xrightarrow{\sim}
 (B/p^nB)^{(p-r)}.
\]
Consequently, extension from $A/p^nA$ to $B/p^nB$
induces termwise isomorphisms on the degree-$p$ complexes.
Together with the $F$-gauge structure,
\eqref{eqn:Ecomparison} therefore gives equivalences in $\mathcal D(W_n)$
\refstepcounter{equation}
\[
 \begin{aligned}
 (M^\bullet\otimes_W W_n)^{(p)}
 &\simeq
 \bigl(M_H^\bullet
       \otimes_{W_n[v_-]}(A/p^nA)\bigr)^{(p)},\\
 M_-^\bullet\otimes_W W_n
 &\simeq M_H^\bullet|_{v_-=1},\\
 (\sigma^*M_+^\bullet)\otimes_W W_n
 &\simeq M_-^\bullet\otimes_W W_n.
 \end{aligned}
 \tag{\theequation}\label{eq:edge-three-comparisons}
\]

For each equivalence in \eqref{eq:edge-three-comparisons},
let $K^\bullet$ and $L^\bullet$ denote the complexes
on the left and right, respectively.
By \eqref{eqn:rqiformula} and \eqref{eq:degree-p-basis},
the complexes $K^\bullet$ and $L^\bullet$ are bounded
complexes of finite free $W_n$-modules satisfying
\[
 \operatorname{rank}_{W_n}K^q
 =
 \operatorname{rank}_{W_n}L^q
 =
 \sum_i r_{q,i}
 \qquad(q\in\mathbb Z).
\]
The equivalence $K^\bullet\simeq L^\bullet$ gives
\[
 \length_{W_n}H^q(K^\bullet)
 =
 \length_{W_n}H^q(L^\bullet)
 \qquad(q\in\mathbb Z).
\]

The exact sequences
\[
 \begin{aligned}
 0&\longrightarrow\ker d_K^{q-1}
   \longrightarrow K^{q-1}
   \longrightarrow\operatorname{im}d_K^{q-1}
   \longrightarrow0,\\
 0&\longrightarrow\operatorname{im}d_K^{q-1}
   \longrightarrow\ker d_K^q
   \longrightarrow H^q(K^\bullet)
   \longrightarrow0
 \end{aligned}
\]
give
\[
 \length_{W_n}\ker d_K^q
 =
 \length_{W_n}H^q(K^\bullet)
 +\length_{W_n}K^{q-1}
 -\length_{W_n}\ker d_K^{q-1}.
\]
Subtracting the formulas for $K^\bullet$ and $L^\bullet$
and inducting from a common lower bound gives
\[
 \length_{W_n}\ker d_K^q
 =
 \length_{W_n}\ker d_L^q
 \qquad(q\in\mathbb Z).
\]
Taking $q=d-1$ for each pair of complexes in
\eqref{eq:edge-three-comparisons}, we obtain
\[
 \begin{aligned}
 \length_{W_n}\ker H_p
 &=\length_{W_n}\ker N_p,\\
 \length_{W_n}\ker N_-
 &=\length_{W_n}\ker H_-,\\
 \length_{W_n}\ker N_+
 &=\length_{W_n}\ker N_-.
 \end{aligned}
\]

On the other hand, \eqref{eqn:N+Np}
gives $\ker N_p\subseteq\ker N_+$.
In view of \eqref{eq:edge-Hp-rescaling}, we may apply
\eqref{eqn:kernel-length-ineq} with
$H=H_-$, $H_c=H_p$, $c=p$, and $m=d$, obtaining
\[
 \length\ker H_-\leq\length\ker H_p.
\]
Combining these equalities and inequalities, we obtain
\[
 \begin{aligned}
 \length\ker H_p
 &=\length\ker N_p
 \leq\length\ker N_+
 =\length\ker N_-\\
 &=\length\ker H_-
 \leq\length\ker H_p.
 \end{aligned}
\]
Hence all inequalities are equalities.

By \eqref{eq:edge-Hp-rescaling} and
$\length\ker H_-=\length\ker H_p$,
Lemma~\ref{edge:kernel}, applied with
$H=H_-$, $H_c=H_p$, $c=p$, and $m=d$, gives a surjection
\refstepcounter{equation}
\[
 \ker H_-
 \twoheadrightarrow
 \ker\bigl((H_-)_{11}:V_1\to U_1\bigr).
 \tag{\theequation}\label{eq:edge-kernel-surjection}
\]

For each chosen homogeneous basis vector $e$ of
$M_H^\bullet$ of grading degree $r\geq1$, we have
$(W_n[v_-]e)^{(1)}=W_n v_-^{r-1}e$.
Evaluation at $v_-=1$ sends this basis vector to $e$.
Thus
\[
 \begin{aligned}
 \Fil^1\mathcal E_{H,n}
 &\simeq (M_H^\bullet)^{(1)}
 \xrightarrow[\ v_-=1\ ]{\sim}
 M_H^\bullet|_{v_-=1},\\
 \operatorname{gr}^1\mathcal E_{H,n}
 &\simeq
 (M_H^\bullet/v_-M_H^\bullet)^{(1)}.
 \end{aligned}
\]
Under these identifications, the map from $\Fil^1$ to
$\operatorname{gr}^1$ is the projection onto the basis
vectors of grading degree $1$: reduction modulo $v_-$
sends $v_-^{r-1}e$ to $e$ for $r=1$ and to zero for $r>1$.
The differential of the target in degree $d-1$ is
$(H_-)_{11}$.
Equation~\eqref{eq:edge-kernel-surjection} therefore gives
surjectivity on cycles in degree $d-1$ for this map.
Passing to cohomology gives
\[
e_E\colon
 H^{d-1}(\Fil^1\mathcal E_{H,n})
 \twoheadrightarrow
 H^{d-1}(\operatorname{gr}^1\mathcal E_{H,n}).
 \]

We now prove the required edge surjectivity for $Y_n$.
By Lemma~\ref{edge:descent}, the map
$\Fil^1\mathcal U_{H,n}\to
\operatorname{gr}^1\mathcal U_{H,n}$
identifies with the identity on
$W_n^{\oplus(h-1)}[-d]$.
Since
$H^{d-1}(W_n^{\oplus(h-1)}[-d])=0$,
the cohomology sequences give a commutative diagram
with exact rows
\[
 \begin{tikzcd}[column sep=small]
 0 \arrow[r] &
 H^{d-1}(\Fil^1\mathcal H_{Y_n})
   \arrow[r,"a"] \arrow[d,"e_H"] &
 H^{d-1}(\Fil^1\mathcal E_{H,n})
   \arrow[r,"\delta_F"] \arrow[d,two heads,"e_E"] &
 W_n^{\oplus(h-1)} \arrow[d,equal]\\
 0 \arrow[r] &
 H^{d-1}(\operatorname{gr}^1\mathcal H_{Y_n})
   \arrow[r,"b"] &
 H^{d-1}(\operatorname{gr}^1\mathcal E_{H,n})
   \arrow[r,"\delta_G"] &
 W_n^{\oplus(h-1)}.
 \end{tikzcd}
\]
By diagram chasing, we obtain the desired surjectivity.
\end{proof}

\begin{proof}[Proof of Theorem~\ref{main:witt}]
If $d=1$, then $X$ is a genus-one curve and admits a smooth
projective lifting over $W$.
If $d=2$, then $X$ is a K3 surface, and the conclusion follows
from \cite[Proposition~2.6]{Ito18}.
We may therefore assume that $d\geq3$.
Choose the initial lifting and the decomposition of
Lemma~\ref{common:canonical}.
By assumption, either $d\leq p+1$, or $d=p+2$ and
$H^0(X,T_X)=0$. Proposition~\ref{edge:finite} therefore
gives the edge-map hypothesis of Proposition~\ref{tower:extension},
which yields a smooth projective lifting over $W$.
\end{proof}

{
\noindent\textbf{Use of generative AI}

The proofs of Theorems~\ref{main:witt} and~\ref{main:W3},
as well as nearly all of the other original mathematical
arguments in this paper, were developed by the author
through extensive discussions with OpenAI's ChatGPT
(GPT-6 Astra).
The question originated with the author, who expected
Witt-vector liftability in view of the favorable properties
of the $W_2$-liftings arising from quasi-$F$-splittings
and asked ChatGPT to investigate this possibility.
ChatGPT was also used to refine the wording and improve
the clarity of the exposition.

At an early stage, ChatGPT claimed that the arguments also
covered globally $F$-split varieties in characteristic two
and quasi-$F$-split varieties in the boundary case $p=d-2$
without the additional assumption $H^0(X,T_X)=0$.
During subsequent verification, carried out with the
assistance of ChatGPT, the author was unable to reconstruct
complete proofs of these stronger claims.
The author therefore decided to exclude characteristic two
and to retain the boundary case only under the assumption
$H^0(X,T_X)=0$.
The author has checked all material retained in the final
manuscript and takes full responsibility for its mathematical
content, presentation, and scope, including these decisions.
}

\section*{Acknowledgements}
The author was supported by JSPS KAKENHI Grant number JP25K17228.

\bibliographystyle{amsalpha}
\bibliography{final}

@article{AZ,
  author  = {Achinger, Piotr and Zdanowicz, Maciej},
  title   = {{Serre--Tate} theory for {Calabi--Yau} varieties},
  journal = {J. Reine Angew. Math.},
  volume  = {780},
  year    = {2021},
  pages   = {139--196},
  doi     = {10.1515/crelle-2021-0041}
}

@article{BerthelotQ,
  author       = {Berthelot, Pierre},
  title        = {Rigid cohomology and de {Rham--Witt} complexes},
  journal      = {Rend. Sem. Mat. Univ. Padova},
  volume       = {128},
  year         = {2012},
  pages        = {287--344}
}

@misc{BhattFG,
  author       = {Bhatt, Bhargav},
  title        = {Prismatic {$F$}-gauges},
  howpublished = {Lecture notes, Princeton University},
  year         = {2022}
}

@article{BMS2,
  author       = {Bhatt, Bhargav and Morrow, Matthew and Scholze, Peter},
  title        = {Topological {Hochschild} homology and integral {$p$}-adic
                  {Hodge} theory},
  journal      = {Publ. Math. Inst. Hautes \'Etudes Sci.},
  volume       = {129},
  year         = {2019},
  pages        = {199--310}
}

@misc{BT,
  author       = {Brantner, Lukas and Taelman, Lenny},
  title        = {Deformations and lifts of {Calabi--Yau} varieties
                  in characteristic {$p$}},
  year         = {2025},
  note         = {arXiv:2407.09256v3}
}

@article{DellAmbrogioStevenson2013,
  author       = {Dell'Ambrogio, Ivo and Stevenson, Greg},
  title        = {On the derived category of a graded commutative
                  noetherian ring},
  journal      = {J. Algebra},
  volume       = {373},
  year         = {2013},
  pages        = {356--376}
}

@article{Illusie,
  author       = {Illusie, Luc},
  title        = {Complexe de de {Rham--Witt} et cohomologie cristalline},
  journal      = {Ann. Sci. \'Ecole Norm. Sup. (4)},
  volume       = {12},
  number       = {4},
  year         = {1979},
  pages        = {501--661}
}

@article{IR,
  author       = {Illusie, Luc and Raynaud, Michel},
  title        = {Les suites spectrales associ\'ees au complexe de de
                  {Rham--Witt}},
  journal      = {Publ. Math. Inst. Hautes \'Etudes Sci.},
  number       = {57},
  year         = {1983},
  pages        = {73--212}
}

@book{Ogus94,
  author       = {Ogus, Arthur},
  title        = {{$F$}-crystals, {Griffiths} transversality, and the
                  {Hodge} decomposition},
  series       = {Ast\'erisque},
  number       = {221},
  publisher    = {Soci\'et\'e Math\'ematique de France},
  address      = {Paris},
  year         = {1994}
}

@misc{Petrov,
  author       = {Petrov, Alexander},
  title        = {Decomposition of de {Rham} complex for
                  quasi-{$F$}-split varieties},
  year         = {2025},
  note         = {arXiv:2502.13356}
}

@misc{Stacks,
  author       = {{The Stacks Project Authors}},
  title        = {The {Stacks} project},
  howpublished = {\url{https://stacks.math.columbia.edu}},
  year         = {2026}
}

@article{TVX,
  author       = {Terentiuk, Gleb and Vologodsky, Vadim and Xu, Yujie},
  title        = {Prismatic {$F$}-gauges and {Fontaine--Laffaille} modules},
  journal      = {Compos. Math.},
  volume       = {162},
  number       = {2},
  year         = {2026},
  pages        = {277--326}
}

@article{YobukoQ,
  author       = {Yobuko, Fuetaro},
  title        = {Quasi-{Frobenius} splitting and lifting of
                  {Calabi--Yau} varieties in characteristic {$p$}},
  journal      = {Math. Z.},
  volume       = {292},
  number       = {1--2},
  year         = {2019},
  pages        = {307--316}
}

@article{ES05,
  author       = {Ekedahl, T. and Shepherd-Barron, N. I.},
  title        = {Tangent lifting of deformations in mixed characteristic},
  journal      = {J. Algebra},
  volume       = {291},
  number       = {1},
  year         = {2005},
  pages        = {108--128}
}

@article{Schroer03,
  author       = {Schr{\"o}er, Stefan},
  title        = {The {$T^1$}-lifting theorem in positive characteristic},
  journal      = {J. Algebraic Geom.},
  volume       = {12},
  number       = {4},
  year         = {2003},
  pages        = {699--714}
}

@book{BLM,
  author       = {Bhatt, B. and Lurie, J. and Mathew, A.},
  title        = {Revisiting the de {Rham--Witt} complex},
  series       = {Ast\'erisque},
  number       = {424},
  publisher    = {Soci\'et\'e Math\'ematique de France},
  year         = {2021}
}

@book{Weibel94,
  author       = {Weibel, C. A.},
  title        = {An Introduction to Homological Algebra},
  series       = {Cambridge Studies in Advanced Mathematics},
  volume       = {38},
  publisher    = {Cambridge University Press},
  address      = {Cambridge},
  year         = {1994}
}

@article{Ogus79,
  author       = {Ogus, Arthur},
  title        = {Supersingular {K3} crystals},
  journal      = {Ast\'erisque},
  volume       = {64},
  year         = {1979},
  pages        = {3--86}
}

@article{Ito18,
  author       = {Ito, Kazuhiro},
  title        = {Finiteness of {Brauer} groups of {K3} surfaces
                  in characteristic {$2$}},
  journal      = {Int. J. Number Theory},
  volume       = {14},
  number       = {6},
  year         = {2018},
  pages        = {1813--1825}
}

@incollection{Deligne81,
  author       = {Deligne, Pierre},
  title        = {Rel\`evement des surfaces {K3} en caract\'eristique nulle},
  booktitle    = {Algebraic surfaces (Orsay, 1976--78)},
  series       = {Lecture Notes in Mathematics},
  volume       = {868},
  publisher    = {Springer},
  address      = {Berlin--New York},
  year         = {1981},
  pages        = {58--79}
}

@article{AddingtonBragg,
  author  = {Addington, Nicolas and Bragg, Daniel},
  title   = {{Hodge} numbers are not derived invariants in positive characteristic},
  journal = {Math. Ann.},
  volume  = {387},
  number  = {1-2},
  year    = {2023},
  pages   = {847--878},
  doi     = {10.1007/s00208-022-02474-w},
  note    = {With an appendix by Alexander Petrov}
}

@article{DI87,
  author  = {Deligne, Pierre and Illusie, Luc},
  title   = {Rel{\`e}vements modulo $p^2$ et d{\'e}composition du complexe de de {Rham}},
  journal = {Invent. Math.},
  volume  = {89},
  number  = {2},
  year    = {1987},
  pages   = {247--270},
  doi     = {10.1007/BF01389078}
}

@article{Hirokado99,
  author  = {Hirokado, Masayuki},
  title   = {A non-liftable {Calabi--Yau} threefold in characteristic $3$},
  journal = {Tohoku Math. J. (2)},
  volume  = {51},
  number  = {4},
  year    = {1999},
  pages   = {479--487},
  doi     = {10.2748/tmj/1178224716}
}

@article{Schroer04,
  author  = {Schr{\"o}er, Stefan},
  title   = {Some {Calabi--Yau} threefolds with obstructed deformations over the {Witt} vectors},
  journal = {Compos. Math.},
  volume  = {140},
  number  = {6},
  year    = {2004},
  pages   = {1579--1592},
  doi     = {10.1112/S0010437X04000545}
}

@misc{Moore26,
  author        = {Moore, Samuel A.},
  title         = {Characterising canonical lifts of ordinary
                   {Calabi--Yau} varieties},
  year          = {2026},
  eprint        = {2610.35807},
  archivePrefix = {arXiv},
  primaryClass  = {math.AG},
  note          = {arXiv:2610.35807},
  url           = {https://arxiv.org/abs/2610.35807}
}

@misc{Ward26,
  author        = {Ward, Matthew},
  title         = {All strict {Calabi--Yau} threefolds of height one
                   lift to characteristic 0},
  year          = {2026},
  eprint        = {2610.37075},
  archivePrefix = {arXiv},
  primaryClass  = {math.AG},
  note          = {arXiv:2610.37075},
  url           = {https://arxiv.org/abs/2610.37075}
}

@article{ArtinMazur77,
  author  = {Artin, M. and Mazur, B.},
  title   = {Formal groups arising from algebraic varieties},
  journal = {Annales scientifiques de l'\'Ecole Normale Sup\'erieure},
  series  = {4},
  volume  = {10},
  number  = {1},
  year    = {1977},
  pages   = {87--131},
  doi     = {10.24033/asens.1322},
  url     = {https://numdam.org/articles/10.24033/asens.1322/}
}

@incollection{Katz81,
  author    = {Katz, Nicholas M.},
  title     = {{Serre--Tate} local moduli},
  booktitle = {Algebraic surfaces ({Orsay}, 1976--78)},
  series    = {Lecture Notes in Mathematics},
  volume    = {868},
  pages     = {138--202},
  publisher = {Springer},
  address   = {Berlin--New York},
  year      = {1981},
  doi       = {10.1007/BFb0090648}
}

@article{NormanOort80,
  author  = {Norman, Peter and Oort, Frans},
  title   = {Moduli of abelian varieties},
  journal = {Ann. of Math. (2)},
  volume  = {112},
  number  = {2},
  year    = {1980},
  pages   = {413--439},
  doi     = {10.2307/1971152}
}

@misc{LST64,
  author       = {Lubin, Jonathan and Serre, Jean-Pierre
                  and Tate, John},
  title        = {Elliptic curves and formal groups},
  year         = {1964},
  howpublished = {Seminar notes, Woods Hole Summer Institute
                  in Algebraic Geometry},
  url          = {https://web.ma.utexas.edu/users/voloch/lst.html}
}

@article{BBKW24,
  author  = {Bernasconi, Fabio and Brivio, Iacopo and
             Kawakami, Tatsuro and Witaszek, Jakub},
  title   = {Lifting globally {$F$}-split surfaces to
             characteristic zero},
  journal = {J. Reine Angew. Math.},
  volume  = {816},
  year    = {2024},
  pages   = {47--89},
  doi     = {10.1515/crelle-2024-0058}
}
\end{document}